\documentclass[a4paper, pdftex]{article}

\usepackage[utf8]{inputenc}
\usepackage[T1]{fontenc}
\usepackage[pdftex]{graphicx}
\usepackage{amsmath}
\usepackage{amsfonts}
\usepackage{amsthm}
\usepackage{amssymb}
\usepackage{setspace}
\usepackage{caption}
\usepackage{thmtools}
\usepackage{thm-restate}
\usepackage{mathtools}
\usepackage{hyperref}
\usepackage[capitalize,nameinlink,noabbrev]{cleveref}
\usepackage{verbatim}
\usepackage[all, cmtip]{xy}
\usepackage{multirow}
\usepackage{bm}
\usepackage[textsize = tiny]{todonotes}
\usepackage{makecell}
\usepackage{doi}

\usepackage[backend=bibtex, style=numeric, url=true, hyperref, maxbibnames=99, sorting=nyt, eprint=false, isbn=false, sortcites=true]{biblatex}
\AtEveryBibitem{\clearlist{language}}
\newtheorem*{thm*}{Theorem}
\newtheorem{theorem}{Theorem}[section]
\newtheorem{corollary}[theorem]{Corollary}
\newtheorem{lemma}[theorem]{Lemma}

\newtheorem*{fact*}{Fact}
\newtheorem{proposition}[theorem]{Proposition}

\newcounter{theoremalph}

\theoremstyle{definition}

\newtheorem{definition}[theorem]{Definition}

\theoremstyle{remark}
\newtheorem{remark}[theorem]{Remark}

	\newcommand{\ls}{\left\{}
	\newcommand{\rs}{\right\}}
	\newcommand{\sm}{\setminus}

	\newcommand{\mcP}{\ensuremath{\mathcal{P}}}

	\newcommand{\mbQ}{\ensuremath{\mathbb{Q}}}
	\newcommand{\mbR}{\ensuremath{\mathbb{R}}}
	\newcommand{\mbZ}{\ensuremath{\mathbb{Z}}}

	\newcommand{\overbar}[1]{\mkern 2mu\overline{\mkern-2mu#1\mkern-2mu}\mkern 2mu}

	\newcommand{\Aut}{\ensuremath{\operatorname{Aut}}}
	
	\newcommand{\Out}{\ensuremath{\operatorname{Out}}}

	\newcommand{\lk}{\operatorname{lk}}
	
	\newcommand{\rk}{\operatorname{rk}}

	\newcommand{\GL}[2]{\ensuremath{\operatorname{GL}_{#1}(#2)}}
	\newcommand{\SL}[2]{\ensuremath{\operatorname{SL}_{#1}(#2)}}

\newcommand{\Conebonds}{C}
\newcommand{\Ctwobonds}{C'}
\newcommand{\fibrenonone}{C^{\operatorname{non-}\operatorname{sep}}_{\subset\sigma}}
\newcommand{\fibreonlyone}{C^{\operatorname{only-}\operatorname{sep}}_{\supset\rho}}
\newcommand{\fibreallone}{C^{\operatorname{all}}_{\supset\rho}}
\newcommand{\fibrenontwo}{C'^{\operatorname{non-}2}_{\subset\tau}}
\newcommand{\fibreonlytwo}{C'^{\operatorname{only-}2}_{\supset\rho}}
\newcommand{\fibrealltwo}{C'^{\operatorname{all}}_{\supset\rho}}
\newcommand{\twosepposet}{\mathcal{P}}

\newcommand{\CV}{\operatorname{CV}}

\newcommand{\onesepedges}{E^{\operatorname{sep}}}

\newcommand{\twosepedges}{E^{2-\operatorname{sep}}}

\newcommand{\FreeS}{\mathcal FS}

\newcommand{\coeffs}{\Bbbk}

\begin{document}

\title{Bond thickenings of the simplicial boundary of Outer space}

\author{Benjamin Br\"uck}

\date{}

\maketitle

\begingroup
\renewcommand{\thefootnote}{}
\footnotetext{2020 Mathematics Subject Classification: 20F65, 20E05, 57M07, 05C40.}
\endgroup

\begin{abstract}
We study the simplicial boundary $\partial\FreeS$ of Culler--Vogtmann Outer space via thickenings defined by graph-theoretic connectivity. Let $\Ctwobonds$ be the subcomplex of the free splitting complex obtained from $\partial\FreeS$ by adding all stable graphs that are not $3$-edge connected, together with their faces. We prove that the inclusion $\partial\FreeS\hookrightarrow\Ctwobonds$ is $(2n-3)$-connected. The proof shows, more precisely, that adding graphs with cut vertices is a homotopy equivalence, while the only non-contractible fibres in the $2$-bond thickening occur over $\theta$-graphs. The result gives further evidence that $\partial\FreeS$ may be $(2n-3)$-spherical, an $\Out(F_n)$-analogue of Rognes's connectivity conjecture for the common basis complex.
It also gives a topological, universal-cover perspective that unifies several existing results about the commutative graph complex.
\end{abstract}

\section{Introduction}

\subsection{Motivation and main result}

Outer space $\CV = \CV_n$, introduced by Culler--Vogtmann
\cite{CV:Moduligraphsautomorphisms}, is a moduli space of marked metric graphs on which $\Out(F_n)$, the outer automorphism group of the free group, acts properly discontinuously.
It can be written as a union of open simplices, each of which corresponds to a marked connected graph of rank $n$ and degree at least three. However, Outer space is not a simplicial complex because some faces are missing. Adding these faces gives the simplicial closure, the free splitting complex $\FreeS$.
Its simplicial boundary $\partial \FreeS$ is the subcomplex consisting of the faces that are not contained in Outer space. Each such face corresponds to a marked graph of groups with at least one non-trivial vertex group; a more precise definition is given in \cref{sec:sphere_systems_basics}.
While both $\CV$ and $\FreeS$ are contractible \cite{CV:Moduligraphsautomorphisms, Hat:Homologicalstabilityautomorphism}, the homotopy type of $\partial \FreeS = \FreeS\setminus \CV$ is not known.

The study of the topology of $\partial \FreeS$ is motivated by an analogy with $\SL{n}{\mbZ}$ acting on the symmetric space $X = \SL{n}{\mbR}/\operatorname{SO}_n$.
Borel--Serre \cite{BS:Cornersarithmeticgroups} constructed a bordification $\bar{X}$, a contractible manifold with corners whose boundary $\partial \bar{X}$ is homotopy equivalent to the Tits building associated to $\SL{n}{\mbQ}$. This building is $(n-2)$-spherical, i.e.~homotopy equivalent to a wedge of spheres of dimension $n-2$. 
Via Poincaré--Lefschetz duality, this makes $\SL{n}{\mbZ}$ a virtual duality group, which underlies many computations of its cohomology~\cite{Lee1978, ElbazVincent2013, Sikiric2019, Brueck2024c, Brueck2024a, CFP:stabilityconjectureunstable, Brown2024a, Ash2024b, Brown2023}.
For $\Out(F_n)$, the analogous picture is less clear because $\CV$ and $\FreeS$ are not manifolds \cite{Wade2024a}.
Nonetheless, understanding the topology of $\partial \FreeS$ seems to be an important step in relating cohomological results for $\SL{n}{\mbZ}$ and $\Out(F_n)$.

One might expect $\partial \FreeS$ to be spherical of dimension roughly $n$, in analogy with the spherical building. This is consistent with the known connectivity bounds: Br\"uck--Gupta \cite[Theorem B]{BG:Homotopytypecomplex} showed that $\partial \FreeS$ is $(n-2)$-connected, i.e.~its homotopy groups vanish up to degree $n-2$,\footnote{These results were generalised to automorphism groups of freely decomposable groups \cite{Bru:buildingsfreefactora}. In this setting, Br\"uck--Piterman \cite{Brueck2024b} provided many further highly connected subcomplexes.} and Vogtmann \cite{Vogtmann2024} showed that the boundary of \emph{reduced} Outer space (see \cref{rem:connection_to_reduced}) is $(n-3)$-connected; both asked whether these bounds are sharp \cite[after Question 4.50]{Bru:buildingsfreefactora}, \cite[Section 3.2]{Vogtmann2024}. 
There is, however, growing evidence that the true dimension is larger: Br\"uck--Gupta showed that $\partial \FreeS$ is homotopy equivalent to the complex of free factor systems defined by Handel--Mosher \cite{HM:Relativefreesplitting}, which has dimension $2n-3$. This complex is a close relative of the common basis complex, a $\GL{n}{\mbZ}$-complex arising in Rognes's work on algebraic $K$-theory \cite{Rognes1992} that he conjectured to be $(2n-3)$-spherical. While it would have strong consequences, the conjecture has so far resisted proof and is only known in versions for fields \cite{Brueck2024, Hanlon, Miller2023, Piterman2025}.
One should therefore expect that it is also challenging to determine the full homotopy type of the $\Out(F_n)$-version $\partial \FreeS$, but it seems plausible that it is $(2n-3)$-spherical as well.

This expectation is supported by two recent results:
Firstly, Petersen--Wade defined a complex of non-simple disc systems on which the mapping class group of a genus-$n$ handlebody acts and showed that it is $(2n-3)$-spherical \cite[Theorem B]{Petersen2024}; for the relation to $\partial \FreeS$, see \cite[Remark 1.5]{Petersen2024}.
Secondly, Br\"uck--Miller--Piterman \cite{Brueck2025} proved that an $\Aut(F_n)$-version of $\partial \FreeS$ is $(2n-3)$-spherical.

The present work gives a further indication that $\partial\FreeS$ might indeed be $(2n-3)$-spherical:
A graph is \emph{$k$-edge connected} if it is connected and remains so after removing up to $k-1$ edges.
Let $\Ctwobonds$ be the smallest subcomplex of $\FreeS$ containing $\partial\FreeS$ and all graphs that are not $3$-edge connected. We view $\Ctwobonds$ as a thickening of the boundary; see \cref{sec:graph_complexes} for a more explicit description.
Recall that a map is called $k$-connected\footnote{Two different notions of connectivity occur in this paper: A graph being $k$-connected in the graph theoretic sense and a space or map being $k$-connected in the homotopical sense. In order to make the distinction clearer, we will say that a graph is $k$-\emph{vertex} connected if it is what is usually just called ``$k$-connected'' in graph theory.} if it induces an isomorphism on $\pi_i$ for all $i<k$ and a surjection on $\pi_k$.
The main result of this paper is as follows.

\begin{theorem}
	\label{thm:intro}
	The inclusion $\partial \FreeS \hookrightarrow \Ctwobonds$ is $(2n-3)$-connected.
\end{theorem}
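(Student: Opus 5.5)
We will obtain $\Ctwobonds$ from $\partial\FreeS$ in two stages, as the abstract indicates. Let $\Conebonds$ be the smallest subcomplex of $\FreeS$ containing $\partial\FreeS$ and all marked graphs that have a separating edge (a $1$-bond), i.e.\ that are not $2$-edge connected. This gives inclusions $\partial\FreeS\subseteq\Conebonds\subseteq\Ctwobonds$. We will show that $\partial\FreeS\hookrightarrow\Conebonds$ is a homotopy equivalence and that $\Conebonds\hookrightarrow\Ctwobonds$ is $(2n-3)$-connected; composing the two gives the theorem. In each stage the simplices of $\CV$ being added are ordered by the graph (forest-collapse) poset, and we use a Quillen-type fibre lemma or a discrete Morse matching: we add the open simplices of the relevant graphs in order of decreasing number of edges, and at each step analyse the descending link. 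For a graph $G$ this link is the part of $\partial\sigma_G$ already present.

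\textbf{Stage 1 (separating edges).} Fix a marked graph $G$ in $\CV$ with a separating edge $e$. Faces of $\sigma_G$ correspond to collapsing forests, and a forest collapse lies in $\partial\FreeS$ exactly when it produces a nontrivial vertex group, i.e.\ collapses a cycle. It lies in the already-added part of $\Conebonds$ when it leaves a graph that still has a separating edge. Collapsing a separating edge never creates a loop, and separating edges survive any collapse of other edges. Hence the link should be a join decomposition in which the separating edges act as a cone point: $\partial\sigma_G\cap(\text{earlier})$ consists of all faces except those that collapse every separating edge while staying in $\CV$. Concretely, we aim to exhibit the relevant fibre (the poset $\fibrenonone$ of non-separating sub-forests) as a cone, or the link as a contractible complex. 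Then adding $G$ does not change the homotopy type, and by induction $\partial\FreeS\simeq\Conebonds$.

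\textbf{Stage 2 ($2$-bonds).} Now add graphs $G$ that are $2$-edge connected but not $3$-edge connected, so they have a $2$-edge cut. The descending link is governed by the poset $\fibrenontwo$ of forests whose collapse either produces a loop (so lands in $\partial\FreeS$) or preserves some $2$-edge cut, or a separating edge. Collapsing one edge of a $2$-bond $\{e,f\}$ makes $f$ a separating edge, so the link is again built from pieces indexed by the $2$-bonds. We will decompose $G$ along its $2$-bonds, using the cactus-like or tree structure of $2$-edge cuts, into $3$-edge connected pieces arranged cyclically. We expect the link to be contractible by a cone/Quillen argument unless every edge lies in a $2$-bond in a single cyclic structure with no room for a cone point. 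That happens exactly when $G$ is a $\theta$-graph: two vertices joined by several paths, or after reduction by degree $\ge3$ the rank-$n$ theta graph with $n+1$ edges between two vertices. For these $G$ the descending link is $\partial\sigma_G$ minus the faces that stay in $\CV$ and are $3$-edge connected. We will compute that it is a sphere, or a wedge of spheres, of dimension at least $2n-3$ (roughly the full boundary dimension $|E(G)|-2$ of $\sigma_G$ for the relevant maximal graphs). Attaching cones on $(2n-3)$-spheres or higher does not change $\pi_i$ for $i<2n-3$ and is surjective on $\pi_{2n-3}$, which gives the claimed $(2n-3)$-connectivity.

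\textbf{Main obstacle.} We expect the hardest step to be the Stage 2 fibre analysis. One must show contractibility for every non-$\theta$ graph with a $2$-bond, uniformly over all configurations of overlapping $2$-edge cuts and multi-edges. One must also pin down the connectivity of the fibre over $\theta$-graphs and check that its degree is at least $2n-4$. We would first try a closure-operator/poset-retraction argument: send a forest $F$ to $F$ union all edges of a fixed $2$-bond that avoids it. Only where this fails would we identify the $\theta$ case explicitly.
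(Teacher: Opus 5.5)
Your split $\partial\FreeS\hookrightarrow\Conebonds\hookrightarrow\Ctwobonds$ is the paper's. Your Stage~1 cone argument, in which the bridges give a minimal element of $\fibrenonone$, is the paper's lower-fibre argument for graphs with at least one bridge. One correction there: with the correct order (number of bridges decreasing, then size increasing), the faces already present are those in $\partial\FreeS$ or those keeping \emph{all} bridges.

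\textbf{Missing upper intervals.} There is a structural gap. You only look downward, at the part of $\partial\sigma_G$ already present, and only at graphs that themselves contain a bond. But $\Conebonds$ and $\Ctwobonds$ are downward closures. They also contain bridgeless graphs with a cut vertex, and graphs with a 2-vertex cut but no 2-bond. Take a bridgeless cut-vertex graph, or a graph in which no edge lies in a 2-bond of any expansion (e.g.\ the $\theta$-graph). For such a graph every proper face is already present, so the downward part is the full sphere $\partial\sigma$. The filtration levels are subposets rather than subcomplexes. Quillen's fibre theorem then uses the join of the lower fibre with the upper interval $(\Ctwobonds_{p,q})_{\supset\sigma}$, and for these graphs all the work is in the upper interval. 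In Stage~1 this means blowing the cut vertex up into a tree of bridges (the Conant--Gerlits--Vogtmann cone point). In Stage~2 it is the flag complex $\fibreonlytwo$ of expansions by 2-separator edges, which has a cone point coming from a $vw$-component with at least two edges.

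\textbf{The $\theta$-graph.} This is also why your $\theta$-graph analysis fails. The $\theta$-graph has no 2-bond (it is 3-edge connected), so it is not a graph in which ``every edge lies in a 2-bond''. For $n\geq 3$ it enters $\Ctwobonds$ only because collapsing both edges of a 2-bond in a suitable expansion produces it. With only $n+1$ edges, its lower fibre is $\partial\Delta^n\simeq S^{n-1}$, since each proper face is a rose or lies in $\partial\FreeS$. It does not have dimension $|E(G)|-2\geq 2n-3$. Looking downward alone would only give $(n-1)$-connectivity, which is weaker than $2n-3$ for $n\geq 3$. The bound comes from $S^{n-1}*\bigvee_{(n-1)!}S^{n-3}$, whose second factor is the upper interval. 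That interval retracts onto Vogtmann's complex of compatible partitions of the $n+1$ edges into two parts of size at least two. The retraction pairs each blow-up at one vertex with the matching blow-up at the other; the two new edges form a 2-bond.

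\textbf{The invariant.} For graphs that do contain 2-bonds, neither ordering by number of edges nor a cactus-type count of 2-bonds gives a usable filtration. The reason is that 2-bonds share edges, and collapsing can create new edges in 2-separators. The missing device is $\twosepedges(\sigma)$, the set of edges lying in a 2-bond of some expansion, characterised via vertex--edge cuts. The paper proves that it is a forest, and that collapsing only such edges keeps you outside $\Conebonds$ and strictly shrinks $\twosepedges$. It then filters by $(|\twosepedges(\sigma)|,|\sigma|)$. These facts are what make retractions such as $\sigma\mapsto\sigma\cup\twosepedges(\tau)$ onto a cone available.
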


In fact we prove a more precise statement: we give a filtration of this inclusion where every step is either an $\Out(F_n)$-equivariant homotopy equivalence (cf.~\cref{sec:equivariance}) or has explicitly described fibres. The only non-contractible fibres occur over marked $\theta$-graphs (graphs with two vertices and $n+1$ non-loop edges), where the relevant join in Quillen's fibre theorem is a wedge of $(n-1)!$ spheres of dimension $2n-3$; see \cref{prop:inclusion_fibres_highly_connected}.

\cref{thm:intro} does not solve the question of what the homotopy type of $\partial \FreeS$ is, but it reduces the problem to another inclusion. We have
\begin{equation*}
    \partial \FreeS \hookrightarrow \Ctwobonds \hookrightarrow \FreeS,
\end{equation*}
where $\FreeS$ is contractible and $\partial \FreeS$ is homotopy equivalent to a complex of dimension $2n-3$. \cref{thm:intro} handles the first inclusion; if the second were $(2n-3)$-connected as well, then $\partial \FreeS$ would be $(2n-3)$-spherical.

\cref{thm:intro} also has an independent payoff in terms of graph complexes: it can be seen as a topological, ``universal-cover'' version of three successive simplifications of Kontsevich's commutative graph complex -- restricting first to 2-vertex connected graphs, then to simple graphs, then to 3-vertex connected graphs. We explain this in \cref{sec:graph_complexes_intro}.

\subsection{Connection to graph complexes}
\label{sec:graph_complexes_intro}

The complex $\Ctwobonds$ contains all graphs that are not 3-edge connected, but also every graph that is obtained by collapsing a subgraph of such a graph.
One can show (see \cref{lem:Conebond_via_cut_vertices} and \cref{rem:2_sepators_in_C}) that for $n\geq 3$, a graph of $\FreeS$ is contained in $\Ctwobonds$ if and only if it is contained in $\partial \FreeS$ or it is not 3-vertex connected, which rouhgly speaking means that it can be disconnected by removing one or two vertices.\footnote{There are different conventions for vertex-connectivity in the setup of graphs with multiple edges and loops, see \cref{sec:graphs} for a more precise definition.}
In other words, $\FreeS\setminus \Ctwobonds$ consists exactly of those graphs that are 3-vertex connected.

This is closely related to results about Kontsevich's commutative graph complex:
Conant--Gerlits--Vogtmann \cite{Conant2005} showed that the homology of the commutative graph complex is the same as that of its quotient spanned by 2-vertex connected graphs, see also \cite[Appendix F]{Willwacher2015a}.
Willwacher--{\v{Z}}ivkovi{\'c} \cite{Willwacher2015} showed that the homology also does not change if one leaves out graphs with multiple edges.
Building on this, Willwacher \cite{Willwacher2025a} showed that one can furthermore restrict to 3-vertex connected graphs.
All of this reduces the size of the graph complex, which allows computations in higher ranks.

The relation between these results and \cref{thm:intro} is as follows:
Conant--Vogtmann \cite[Proposition 27 and paragraph after its proof]{CV:theoremKontsevich} showed that the homology of the commutative graph complex is the same as $H_\bullet^{\Out(F_n)}(\FreeS, \partial \FreeS; \coeffs)$, where $\coeffs$ depends on whether one considers the even or the odd version of the graph complex.
Roughly speaking, \cref{thm:intro} says that in low degrees, we can replace $(\FreeS, \partial \FreeS)$ by $(\FreeS, \Ctwobonds)$.
The chains of this pair are generated by graphs in $\FreeS\setminus \Ctwobonds$ and as observed above, these are exactly the ones that are 3-vertex connected.
In this sense, \cref{thm:intro} can be seen as a ``universal cover'' version that combines the graph complex results above. 
It is however not true that it directly implies these results; for further details, see \cref{sec:graph_complexes}.
More comments on the connection between sphericity of $\partial \FreeS$ and the commutative graph complex can also be found in \cite[Section 1.5.4]{Brueck2025}.

\begin{remark}
	There are similar results for the Lie graph complex, whose homology is isomorphic to the equivariant homology of the spine of Outer space, i.e.~the cohomology of $\Out(F_n)$ \cite[Theorem 2]{CV:theoremKontsevich}.
	Here, it is known that one can restrict to the subcomplex of 2-edge connected graphs \cite{CV:Moduligraphsautomorphisms, CV:Infinitesimaloperationscomplexes, Brun2023}.
	In upcoming work, Grego--V\'itek--Willwacher also show that for $n\leq 8$, it is sufficient to consider 3-edge connected graphs.
	They use this to compute the rational cohomology of $\Out(F_n)$ up to rank $n=8$ and give an estimate for the top-degree cohomology in rank $n=9$.
	
\end{remark}

\subsection{Structure of the proof}
\label{sec:structure_of_proof}
We prove \cref{thm:intro} in two steps: First, we show that the ``1-bond thickening'' given by the inclusion $\partial \FreeS \hookrightarrow \Conebonds$ is a homotopy equivalence, where $\Conebonds$ is the smallest subcomplex of $\FreeS$ containing all graphs that lie in $\partial \FreeS$ or are \emph{not} 2-edge connected. (Outside $\partial\FreeS$, this is the same as the poset of graphs with a cut vertex; see \cref{lem:Conebond_via_cut_vertices}.)
Then we show that the 2-bond thickening $\Conebonds \hookrightarrow \Ctwobonds$ is $(2n-3)$-connected.

\paragraph{The 1-bond thickening}
To show that the 1-bond thickening $\partial \FreeS \hookrightarrow \Conebonds$ is a homotopy equivalence (\cref{sec:one_bonds} of the present article), we define a filtration $\partial \FreeS = F_0 \subset F_1 \subset \cdots \subset F_k = \Conebonds$ and show that all intermediate inclusions are homotopy equivalences. This filtration is closely related to one defined by Conant--Gerlits--Vogtmann in \cite[Section 2]{Conant2005} who use it to study the commutative graph complex.
In this sense, this part is a topological version of their argument.
Note that Conant--Gerlits--Vogtmann in \cite[Section 3]{Conant2005} also give a topological version of the argument of \cite[Section 2]{Conant2005}: 
They show that adding graphs with cut vertices does not change the homotopy type of the boundary of the Bestvina--Feighn bordification of Outer space \cite{BF:topologyinfinity}.
That argument has a flavour that is quite different from our proof though because it requires one to describe and rescale metrics on (sequences of) graphs. In contrast to that, we argue more combinatorially, considering only the poset of cells of $\FreeS$.

\paragraph{The 2-bond thickening}
Our approach for the 2-bond thickening $\Conebonds \hookrightarrow \Ctwobonds$ (\cref{sec:overview_C'_C} until \cref{sec:finishing_proof} of the present article) is in principle similar, but technically much more involved. 
Again, we define a filtration $\Conebonds = F'_0 \subset F'_1 \subset \cdots \subset F'_k = \Ctwobonds$. We show that all but one of these inclusions are homotopy equivalences and the remaining inclusion is $(2n-3)$-connected.
For this inclusion, we get a precise description of the homotopy type of the fibre. Roughly speaking, it consists of a wedge of $(n-1)!$ spheres of dimension $2n-3$ for every marked $\theta$-graph, i.e.~graph with two vertices and $n+1$ non-loop edges connecting them, see \cref{fig:theta_graph}.
\begin{figure}
	\centering
	\includegraphics{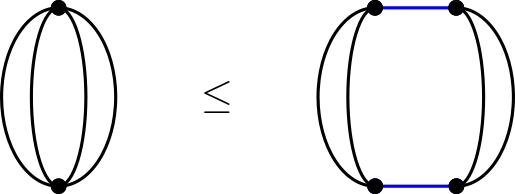}
	\caption{A $\theta$-graph in rank $n=3$ (left) and a graph with a 2-bond (in blue) that collapses to it (right). As the graph on the right is in $\Ctwobonds$, so is the $\theta$-graph.}
	\label{fig:theta_graph}
\end{figure}
For a more precise statement, see \cref{prop:inclusion_fibres_highly_connected}.
A more detailed overview of the proof idea for the inclusion $\Conebonds \hookrightarrow \Ctwobonds$ is given in \cref{sec:overview_C'_C}.

\subsection{Outlook}
The results in this paper suggest two natural generalisations:
Firstly, it seems likely that the arguments can be adapted to (moduli spaces of) graphs with marked points. The author checked that this would indeed work for the 1-bond thickening $\partial \FreeS \hookrightarrow \Conebonds$, but then decided not to include it in this article to simplify the notation and the structure of the arguments.
Secondly, one could try to use $k$-edge connectivity for $k >3$ to further thicken the boundary and get a better understanding of the inclusion $\Ctwobonds\hookrightarrow \FreeS$. 
However, there is one fundamental difficulty here: A simple graph with a vertex $v$ of degree three can never be 4-edge connected, since removing the three edges adjacent to $v$ isolates $v$ and
therefore disconnects the graph. Hence, there is no obvious generalisation to this setup and it is unclear whether one can work around this issue.

\subsection{Acknowledgments and funding information}

This work emerged from a joint project of the author with Jeremy Miller and Kevin Piterman, which also led to \cite{Brueck2025}.
I am grateful to both for numerous insightful discussions.
In particular, I thank Jeremy Miller and Peter Patzt for drawing my attention to the work of Conant--Gerlits--Vogtmann \cite{Conant2005}, which led to the proof that the inclusion $\partial\FreeS \to \Conebonds$ is a homotopy equivalence.
I also thank Kevin Piterman for his careful reading of earlier versions and for valuable explanations and references concerning (equivariant) fibre theorems.

I would like to acknowledge Maro{\v{s}} Grego, Tom{\'a}{\v{s}} V{\'i}tek and Thomas Willwacher for clarifying the results of their forthcoming work and for fruitful conversations about the connections with the present article.
Finally, I thank Richard Wade for his detailed explanations of the Robinson--Whitehouse results \cite{Robinson1996}.

The author was supported by the Deutsche Forschungsgemeinschaft through Germany’s Excellence Strategy grant EXC 2044/2–390685587 and through Project 427320536–SFB 1442.

\subsection{Declaration of generative AI use}

The author used Claude Opus 4.8 and GPT-5.5 (via gpt.uni-muenster.de, July 2026) for suggestions on organisation and wording, for background pointers, and to review proofs for possible errors or gaps; all such suggestions were independently verified by the author. In \cref{sec:graph_complexes}, Claude Opus 4.8 additionally supported exploratory discussion of the link between \cref{thm:intro} and the graph-complex results of \cite{Conant2005, Willwacher2015, Willwacher2025a}; the text was written by the author and all references and interpretations were checked against the original sources. The author takes full responsibility for all mathematical claims, proofs, citations, and conclusions.

\section{Definitions}

\subsection{Graphs}
\label{sec:graphs}
Throughout this article, all graphs are allowed to have loops and multiple edges.
Let $G$ be a graph. We write $V(G)$ for its vertex set and $E(G)$ for its edge set.
For $v\in V(G)$, we call the number of half-edges adjacent at $v$ the \emph{degree} or \emph{valence} of $v$. The degree of a graph $G$ is the minimal degree of its vertices.
If $G$ is connected, we denote by $\rk(G)$ the rank of its fundamental group, i.e.~the first Betti number of $G$.

When we want to stress that we consider a graph as a topological space, we write $|G|$ for its geometric realisation. For $e\in E(G)$, we consider $e\subset |G|$ as the open edge corresponding to $e$ without its endpoints and we write $\overline{e}\subseteq |G|$ for the closed edge corresponding to $e$ together with its endpoints. We consider every vertex $v\in V(G)$ as a point in $|G|$.

If $E\subseteq E(G)$, we denote by $G-E$ the graph that is obtained by deleting all edges from $E$ (but keeping their endpoints).
If $X$ is a set of vertices and open or closed edges of $|G|$, we write $|G| - X$ for the topological space obtained by removing all these vertices and edges from  the geometric realisation of $G$; in general, this is not a graph itself.

A subset $S\subseteq E(G)$ is a \emph{$k$-bond} if it has size $k$ and $G-S$ is disconnected. 
An edge that forms a 1-bond is called a \emph{separating edge} or \emph{bridge}.
A graph is \emph{$k$-edge connected} if it does not contain an $l$-bond for any $l < k$. We only use this notion for $k\in \ls 2,3\rs$ in the present article.

A \emph{cut vertex} is a vertex $v\in V(G)$ such that $|G|-v$ is disconnected.
A pair of vertices $\ls v, w\rs\in V(G)$ is called a \emph{2-vertex cut} if $v\neq w$ and
either $v$ and $w$ are the endpoints of a multiedge or $|G|$ becomes disconnected after removing $v,w$ and the edge connecting them (if there is one).\footnote{There are different conventions for defining vertex connectivity for multi-graphs, it is common to see the endpoints of multi-edges as 2-vertex cuts, see e.g.~\cite[Section 1.10]{Diestel2000}.}
Note that a graph that is $3$-vertex connected is nec
We say that a connected graph is \emph{$2$-vertex connected} if it does not contain a cut vertex and is \emph{$3$-vertex connected} if it is contains neither a cut vertex nor a 2-vertex cut, see \cref{fig:example_2_vertex_cut}.essarily simple because the endpoint of every loop forms a cut vertex and the endpoints of every multi-edge form a $2$-vertex cut.
\begin{figure}
	\centering
	\includegraphics{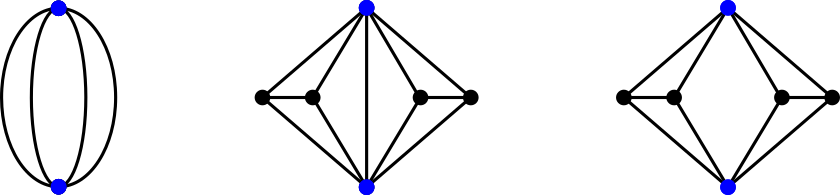}
	\caption{Three graphs with a 2-vertex cut, marked in blue.}
	\label{fig:example_2_vertex_cut}
\end{figure}

One type of graph that will be important in this article is the \emph{rank-$n$ $\theta$-graph}, which is the graph with two vertices and $n+1$ non-loop edges connecting them as depicted in \cref{fig:theta_graph}. For $n\geq 2$, it is $3$-edge connected, but not $3$-vertex connected as its two vertices form a 2-vertex cut.

\subsection{(Poset) topology}
We often identify simplicial complexes with their posets of simplices.
If $X$ is a poset and $x\in X$, we write $X_{<x}$ for the subposet of all $y\in X$ with $y < x$. Similarly, we define $X_{\leq x}$, $X_{>x}$ and $X_{\geq x}$.

If $X$ and $Y$ are posets, we call a map $f:X\to Y$ a \emph{poset map} if for all $x\leq x'\in X$, we have $f(x) \leq f(x')$ in $Y$.
A poset map $f:X\to X$ is called \emph{monotone} if either for all $x\in X$, we have $f(x)\leq x$ or for all $x\in X$, we have $f(x)\geq x$. Monotone poset maps define homotopy equivalences, which we will use frequently.
Another tool that we will use is Quillen's fibre theorem, which we will use in the following form, stated in \cite[Theorem 5.3]{Piterman2024}.
\begin{theorem}
	\label{thm:Quillen_fibre}
	Let $f\colon X \to Y$ be a poset map between posets of finite height. Let $k\geq -1$ and assume that for all $y\in Y$, the join $f^{-1}(Y_{\leq y})\ast Y_{> y}$ is $k$-connected. Then $f$ is $(k+1)$-connected, i.e.~it induces an isomorphism on $\pi_i$ for all $i\leq k$ and a surjection on $\pi_{k+1}$.
\end{theorem}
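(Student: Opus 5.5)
The plan is to replace $f$ by an inclusion into a mapping-cylinder poset and then build that cylinder from $X$ one height level at a time, checking connectivity at each step via links.

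\textbf{Step 1: the mapping cylinder.} Let $M$ be the poset on the disjoint union $X\sqcup Y$. Keep the original orders on $X$ and on $Y$, and for $x\in X$, $y\in Y$ declare $x<y$ if and only if $f(x)\leq y$ (never $y<x$). Transitivity follows because $f$ is a poset map. Define $r\colon M\to Y$ by $r(x)=f(x)$ and $r(y)=y$. This is a poset map, and $r(m)\geq m$ for all $m\in M$, viewing $r$ as a self-map of $M$ via $Y\subseteq M$. So $r$ is monotone, and the inclusion $Y\hookrightarrow M$ is a homotopy equivalence with homotopy inverse $r$. Moreover $f=r\circ\iota$, where $\iota\colon X\hookrightarrow M$ is the inclusion. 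It therefore suffices to show that $\iota$ is $(k+1)$-connected, i.e.\ that the pair $(M,X)$ is $(k+1)$-connected.

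\textbf{Step 2: adding $Y$ top-down.} Since $Y$ has finite height, let $Y^{(j)}\subseteq Y$ be the elements $y$ such that the longest chain in $Y_{>y}$ has length less than $j$. This gives a finite filtration
\[
X=M_0\subseteq M_1\subseteq\cdots\subseteq M_h=M,\qquad M_j=X\cup Y^{(j)},
\]
so each step adjoins a set of pairwise incomparable elements $y$, all of whose upper sets $Y_{>y}$ are already present. For such a $y$, the link of $y$ in $M_{j}$ is $(M_{j-1})_{<y}\ast (M_{j-1})_{>y}$. No element of $Y$ below $y$ has been added yet, so $(M_{j-1})_{<y}=\{x\in X: f(x)\leq y\}=f^{-1}(Y_{\leq y})$. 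Likewise $(M_{j-1})_{>y}=Y_{>y}$. Hence the link is exactly $f^{-1}(Y_{\leq y})\ast Y_{>y}$, which is $k$-connected by hypothesis. Because the new elements are pairwise incomparable, $|M_j|$ is obtained from $|M_{j-1}|$ by attaching, for each new $y$, the cone on this link along the link, and these attachments are independent of each other.

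\textbf{Step 3: connectivity of the attachments.} If $L$ is $k$-connected, then the pair $(CL,L)$ is $(k+1)$-connected. For CW pairs, $(A,B)$ being $n$-connected implies that $(Z\cup_B A, Z)$ is $n$-connected: replace $(A,B)$ up to homotopy by a relative CW complex with cells only in dimensions $>n$, then use cellular approximation. Applying this to all the cones simultaneously shows that each $(M_j,M_{j-1})$ is $(k+1)$-connected. Composing finitely many such inclusions, $(M,X)$ is $(k+1)$-connected, and therefore so is $f$.

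The main point needing care is Step 2: choosing the top-down order so that the link is exactly the join in the hypothesis, with no stray elements of $Y$ below $y$. The rest is standard homotopy theory. The case $k=-1$ (nonemptiness of the joins giving surjectivity on $\pi_0$) is handled by the same argument.
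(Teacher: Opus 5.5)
The paper does not prove this statement; it quotes it from \cite[Theorem 5.3]{Piterman2024}, so there is no internal argument to compare against. Your proof is correct and self-contained.

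\textbf{Step 1.} The non-Hausdorff mapping cylinder $M$ and the monotone map $r\geq \mathrm{id}_M$ correctly reduce the problem to the inclusion $X\hookrightarrow M$. The connectivity of the pair $(M,X)$ is then equivalent, via the long exact sequence at every basepoint, to the map condition in the statement.

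\textbf{Step 2.} Adding the elements of $Y$ in order of decreasing height does what you need. Each new $y$ is incomparable to the other new elements, all of $Y_{>y}$ is already present, and no element of $Y$ below $y$ has been added. Hence the link of $y$ is exactly $f^{-1}(Y_{\leq y})\ast Y_{>y}$, and the closed stars of distinct new elements meet only inside $|M_{j-1}|$.

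\textbf{Step 3.} The pushout argument gives $(k+1)$-connectivity of each step, even when infinitely many cones are attached at once. It replaces $(CL,L)$, relative to $L$, by a relative CW complex with cells only in dimensions $>k+1$. Composing finitely many such steps finishes the proof.

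Two small remarks:
\begin{itemize}
\item For $M_0=X$, chain length must be counted by number of elements. Then $Y^{(0)}=\emptyset$ and $Y^{(1)}$ is the set of maximal elements.
\item Your argument uses finite height only for $Y$, not for $X$.
\end{itemize}

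What your route adds over the bare citation is a proof that visibly works for infinite posets of finite height. This is exactly the generality the paper needs, since it applies the theorem to $\FreeS$ and its subposets.
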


Note that order complexes of posets are CW-complexes, so if a poset map is $k$-connected for all $k\geq 0$, then it is a homotopy equivalence.

\subsection{Complexes of sphere systems}
\label{sec:sphere_systems_basics}
Starting from this section and for the rest of the paper, we fix $n\geq 2$.
Let $M = M_n$ be the $n$-fold connected sum of $S^1\times S^2$.
A \emph{sphere system} in $M$ is a finite set of isotopy classes of disjointly embedded 2-spheres that do not bound a 3-disk.
These sphere systems first appeared in work of Whitehead \cite{Whi:CertainSetsElements,Whitehead1936} and were generalised by Hatcher \cite{Hat:Homologicalstabilityautomorphism}.
We write $\FreeS = \FreeS(M)$ for the simplicial complex whose $p$-simplices are given by sphere systems in $M$ of size $p+1$ and where the face relation is given by containment.
If $\rho = \{ [S_1],\ldots, [S_r]\}\in \FreeS(M)$ is a sphere system, we can choose representatives $S_i$ and open tubular neighbourhoods $S_i\subset T_i$ in the interior of $M$ such that $T_i\cap T_j = \emptyset$ for $i\neq j$. The complement $M \setminus \bigcup_i T_i$ is a (possibly disconnected) 3-manifold with boundary and its diffeomorphism type depends only on $\rho$, not on the choices of representatives $S_i$ and open tubular neighbourhoods $T_i$. For an example, see \cite[Figure 1]{Brueck2025}.
We denote it by $M- \rho$. In what follows, we will usually identify isotopy classes of spheres $[S]$ with their representatives $S$.

Let $\partial \FreeS\subset \FreeS$ be the subposet consisting of sphere systems $\sigma$ where at least one connected component of $M_{n}-\sigma$ is not simply connected. It is easy to see that this subposet is downwards-closed, so in particular forms a sub\emph{complex} of $\FreeS$.
It is called the \emph{simplicial boundary of Outer space $CV_n$}.

\begin{remark}
	\label{rem:connection_to_reduced}
	There is a reduced version of Outer space, denoted by $\CV^r$, which is obtained from $\CV$ by removing all graphs that are not 2-edge connected, i.e.~contain a separating edge. Let $\FreeS^r$ and $\partial \FreeS^r$ denote its simplicial closure and simplicial boundary, respectively.
	Equivalently, $\FreeS^r$ is the subcomplex of $\FreeS$ given by all sphere systems $\sigma$ such that $M-\ls S\rs$ is connected for all $S\in \sigma$.
	It is true that $\CV$ deformation retracts to $\CV^r$ and that there is an inclusion of pairs $(\FreeS^r, \partial \FreeS^r)\hookrightarrow (\FreeS, \partial \FreeS)$.
	However, this does not induce a homotopy equivalence between $\partial \FreeS^r$ and $\partial \FreeS$ and already for $n=2$, these spaces are not homotopy equivalent.
	Vogtmann \cite{Vogtmann2024} proved that $\partial \FreeS^r$ is homotopy equivalent to the boundary of jewel space, defined by Bux--Smillie--Vogtmann \cite{BSV:bordificationouterspace}, which in turn is homotopy equivalent to the boundary of the Bestvina--Feighn bordification of Outer space \cite{BF:topologyinfinity}.
\end{remark}

\subsubsection{Dual graphs}

To every $\sigma\in \FreeS$, we can associate a \emph{dual graph} $\Gamma(\sigma)$ (cf.~\cite[Appendix]{Hat:Homologicalstabilityautomorphism}).
Its vertices are the connected components of $M - \sigma$.
There is one edge $e_S$ for every sphere $S\in \sigma$ that connects the two vertices corresponding to the connected components on the two sides of $S$. Note that it is possible that both sides of $S$ lie in the same connected component; in this case, $e_S$ is a loop. 
If $G$ is a graph and $X\subseteq \FreeS$ is a subposet, then we sometimes write $G\in X$ if there is $\sigma\in X$ with $G = \Gamma(\sigma)$. 
With this notation, we say that a graph $G$ is \emph{stable} if $G\in \FreeS$.\footnote{Often, one equips dual graphs of systems $\sigma\in \partial \FreeS$ with an additional vertex decoration that associates to each vertex the rank of the fundamental group of the corresponding component of $M-\sigma$, see e.g.~\cite[Section 6]{Brueck2025}. These do not show up in this article because we will only need to consider graphs that are dual to sphere systems in $\FreeS\setminus \partial \FreeS$.} 
We have $G\in \FreeS\setminus \partial \FreeS$ if and only if $G$ is connected, has fundamental group isomorphic to $F_n$ and $G$ has degree at least three.

If $\rho\subset \sigma$ is a face of $\sigma\in \FreeS$, then $\Gamma(\rho)$ is obtained from $\Gamma(\sigma)$ by collapsing the edges corresponding to the spheres in $\sigma\setminus \rho$, see \cref{fig:dual_graph_example}.
We then write $\Gamma(\rho)\leq \Gamma(\sigma)$.
\begin{figure}
	\centering
	\includegraphics{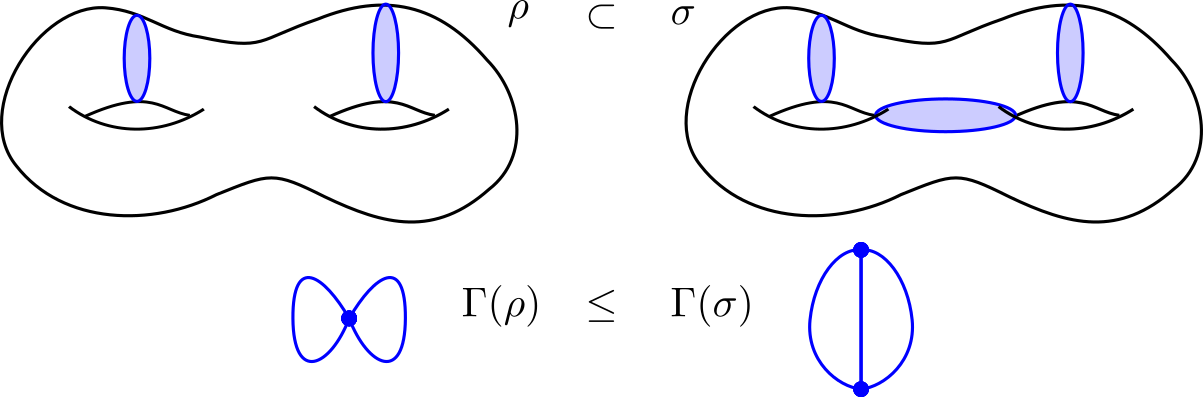}
	\caption{Two sphere systems $\rho\subset \sigma$ in $\FreeS\setminus \partial \FreeS$ and their dual graphs.}
	\label{fig:dual_graph_example}
\end{figure}

\subsubsection{The link of a sphere system}
\label{sec:link_sphere_system}

The link $\lk_{\FreeS}(\rho)$ of a sphere system $\rho\in\FreeS$ is a simplicial complex whose vertices are isotopy classes of spheres $S$ such that $S\not \in \rho$ and $\rho \cup \ls S \rs \in \FreeS$. 
As $\FreeS$ is a flag complex, so is $\lk_{\FreeS}(\rho)$. This means that a collection of such spheres forms a simplex (in $\FreeS$ or in $\lk_{\FreeS}(\rho)$) if and only if for all $S,T$ in the collection, we have $\rho\cup \ls S,T \rs\in \FreeS$. There are isomorphisms between $\FreeS_{\supset\rho}$ and $\lk_{\FreeS}(\rho)$ given by removing, respectively adding $\rho$.

The link decomposes as an iterated join: We have
\begin{equation*}
	\lk_{\FreeS}(\rho) = \ast_{v\in V(G)} \FreeS_v,
\end{equation*}
where $G\coloneqq \Gamma(\rho)$ and for $v\in V(G)$, $\FreeS_v$ is the complex of all sphere systems $\sigma_v$ in the connected component $M_v\subseteq M-\rho$ corresponding to $v$ such that $\rho \cup \sigma_v\in \FreeS$.

If $\rho\not\in \partial \FreeS$, then for all $v\in V(G)$, the complex $\FreeS_v$ is isomorphic to a complex of partitions: 
Write $E(v)$ for the set of half-edges adjacent to $v$.
As $\rho\not\in \partial \FreeS$, the connected component $M_v\subseteq M-\rho$ is a 3-sphere with one boundary component for each half-edge in $E(v)$. 
Every sphere $S\in \FreeS_v$ induces a partition $P_S$ of $E(v)$ into two subsets, each of which contains at least two elements (the latter is forced by the condition that the degree of all graphs in $\FreeS$ is at least 3).
Two spheres $S,T\in \FreeS_v$ form a simplex in $\FreeS_v$ if and only if the partitions are compatible, which means that one side of $P_S$ is contained in one side of $P_{T}$. $\FreeS_v$ is the flag complex defined by this edge relation.

\subsection{Bond thickenings of the boundary \texorpdfstring{$\partial \FreeS$}{\partial S}}

We define two additional posets, which are nested between $\partial \FreeS$ and $\FreeS$,
\begin{equation*}
	\partial \FreeS \hookrightarrow \Conebonds \hookrightarrow \Ctwobonds \hookrightarrow \FreeS.
\end{equation*}

\begin{definition} 
	\begin{enumerate}
		\item Let $\Conebonds\subset \FreeS$ be the smallest downwards-closed subposet of $\FreeS$ that contains $\partial \FreeS$ and every $\sigma \in \FreeS$ such that  $\Gamma(\sigma)$ has a 1-bond, i.e.~contains a separating edge.
		\item Let $\Ctwobonds\subset \FreeS$ be the smallest downwards-closed subposet of $\FreeS$ that contains $\Conebonds$ and every $\sigma \in \FreeS$ such that $\Gamma(\sigma)$ has a 2-bond.
	\end{enumerate}
\end{definition}

In other words, $\Conebonds$ is the smallest subcomplex of $\FreeS$ containing all graphs that lie in $\partial \FreeS$ or are \emph{not} 2-edge connected;
$\Ctwobonds$ is the smallest subcomplex of $\FreeS$ containing all graphs that lie in $\partial \FreeS$ or are \emph{not} 3-edge connected.

By definition, these posets are downwards-closed, so in fact, they form subcomplexes of $\FreeS$.
For reference, we record the following immediate consequence.

\begin{lemma}
	\label{lem:rk_preserved_2_connected}
	Let $\rho,\sigma \in \FreeS$ with $\rho\subseteq \sigma$.
	\begin{enumerate}
		\item If $\rho \in \FreeS\setminus \partial \FreeS$, so is $\sigma$.
		\item If $\rho \in \FreeS\setminus \Conebonds$, so is $\sigma$.
	\end{enumerate}
\end{lemma}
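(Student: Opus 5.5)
Both parts follow from the fact that $\partial\FreeS$ and $\Conebonds$ are downwards-closed subposets of $\FreeS$, so we argue by contraposition. In each case, if $\sigma$ lay in the subposet, then its face $\rho$ would lie there as well.

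For the first statement, suppose $\rho\subseteq\sigma$ with $\rho\notin\partial\FreeS$. If $\sigma\in\partial\FreeS$, then downward closure of $\partial\FreeS$ (noted in \cref{sec:sphere_systems_basics}) gives $\rho\in\partial\FreeS$, a contradiction. If one prefers to verify downward closure directly, it can be seen as follows. Each component of $M-\rho$ is obtained by gluing components of $M-\sigma$ along the spheres in $\sigma\setminus\rho$. Gluing along $2$-spheres amounts, by van Kampen, to free products and HNN extensions. Hence a component of $M-\sigma$ with nontrivial $\pi_1$ forces the component of $M-\rho$ containing it to have nontrivial $\pi_1$: a free factor of a group injects into that group.

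For the second statement, $\Conebonds$ is downwards-closed by its definition as the smallest downwards-closed subposet with the stated property. So $\sigma\in\Conebonds$ would imply $\rho\in\Conebonds$, again contradicting the hypothesis. Moreover, since $\partial\FreeS\subseteq\Conebonds$, we get $\rho\in\FreeS\setminus\partial\FreeS$, and the first part then yields $\sigma\in\FreeS\setminus\partial\FreeS$. This is consistent with, but not required for, $\sigma\notin\Conebonds$.

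None of these steps poses a real obstacle. The only point needing care is the downward closure of $\partial\FreeS$, which the paper states as easy; the van Kampen remark above justifies it if needed.
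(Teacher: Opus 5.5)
Your proof is correct and takes the same approach as the paper, which records this lemma without proof as an immediate consequence of $\partial\FreeS$ and $\Conebonds$ being downwards-closed; this is exactly your contrapositive argument. Your van Kampen justification of the downward closure of $\partial\FreeS$ (gluing along $2$-spheres gives free products or HNN extensions with trivial edge group, so the pieces are free factors and inject) correctly fills in the step the paper calls easy.
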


\section{The 1-bond thickening $\partial \FreeS\hookrightarrow \Conebonds$}
\label{sec:one_bonds}
In this section, we show the following:

\begin{theorem}
	\label{thm:hom_eq_partialS_C}
	The inclusion $\partial \FreeS\hookrightarrow \Conebonds$ is a homotopy equivalence.
\end{theorem}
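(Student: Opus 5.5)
We build $\Conebonds$ from $\partial\FreeS$ by adding the simplices of $\Conebonds\setminus\partial\FreeS$ in a controlled order, and check that each step is a homotopy equivalence, in the spirit of the filtration of Conant--Gerlits--Vogtmann. First we characterise $\Conebonds\setminus\partial\FreeS$. A simplex $\rho\notin\partial\FreeS$ lies in $\Conebonds$ if and only if $\Gamma(\rho)$ is obtained by collapsing edges of some $\Gamma(\sigma)\in\FreeS\setminus\partial\FreeS$ with a separating edge. Such a collapse turns the separating edge into a cut vertex, so this should be equivalent to $\Gamma(\rho)$ having a cut vertex or a separating edge. Conversely, at a cut vertex $v$ we can blow up a separating edge. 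Here the partition of $E(v)$ induced by the components of $|\Gamma(\rho)|-v$ has both sides of size at least two, because every component carries rank and hence contributes at least two half-edges at $v$.

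Next we filter $\Conebonds\setminus\partial\FreeS$ by a complexity that decreases under the blow-up of cut vertices. The natural choice is the number of \emph{blocks}, that is, 2-vertex connected pieces together with bridges. A cleaner alternative is to order by $\rho$ together with its canonical maximal blow-up $\hat\rho$: this is $\rho$ together with all spheres realising the separating partitions at every cut vertex of $\Gamma(\rho)$. These partitions are pairwise compatible, since they come from the block decomposition, so $\hat\rho\in\FreeS$, and $\hat\rho\supseteq\rho$ lies in $\Conebonds$. Set $F_i=\partial\FreeS\cup\{\rho\in\Conebonds : \text{$\Gamma(\rho)$ has at most $i$ vertices}\}$, or a similar grading. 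We add the simplices $\rho$ with $\hat\rho=\rho$ together with their faces in decreasing order of the number of separating edges. At each stage we show that the link of every newly added simplex, intersected with the previous stage, is contractible. The standard criterion is that adding a set of pairwise incomparable elements $\rho$ to a poset $X$ is a homotopy equivalence when each $X_{<\rho}\ast X_{>\rho}$ (restricted to $X$) is contractible. Alternatively, we apply \cref{thm:Quillen_fibre} to the retraction-type map.

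The key contractibility argument is a cone construction. The upper part $X_{>\rho}$ is the link $\lk(\rho)\cap X$, which decomposes as a join $\ast_v \FreeS_v$ of partition complexes by the description in \cref{sec:link_sphere_system}. At a cut vertex $v$, the separating partition $P$ is compatible with every partition that does not cross it. We therefore expect $X_{>\rho}$ to be conically contractible via a monotone map $\sigma\mapsto\sigma\cup\{S_P\}$, or rather via the chain $\sigma\ge\sigma\cap(\text{non-crossing part})\le(\sigma\cap\ldots)\cup\{S_P\}$. If $\Gamma(\rho)$ already has a separating edge instead, we use the lower link: collapsing that bridge stays inside $\Conebonds$, and removing edges gives a cone point in $X_{<\rho}$. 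This is where monotone poset maps enter.

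The main obstacle is making these monotone maps well defined as poset maps that land in the correct filtration stage. Adding $S_P$ must not leave $\Conebonds$, which is fine since it creates a bridge. But intersecting with the partitions compatible with $P$ can leave the previous filtration stage, or can create a degenerate graph of degree less than $3$. For this reason the filtration order must be chosen so that the cone point is available exactly when needed. I would first try ordering by the number of separating edges in $\Gamma(\rho)$, decreasing, so that $\rho\cup\{S_P\}$ is always already present. The case where a vertex of $\Gamma(\rho)$ corresponds to a boundary component ($\partial\FreeS$) needs separate care, via \cref{lem:rk_preserved_2_connected}.
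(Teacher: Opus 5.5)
\paragraph{Verdict.} The proposal has genuine gaps: it never commits to a filtration, and the lower-fibre argument runs in the wrong direction.

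\paragraph{The filtration.} The one grading you write down fails. With $F_i$ defined by the number of vertices, a newly added $\rho$ has empty upper part: everything strictly above it lies outside $\partial\FreeS$ by \cref{lem:rk_preserved_2_connected} and has more vertices. Its lower part is the whole boundary of the simplex $\rho$. For instance, $F_0\hookrightarrow F_1$ attaches every rose as an $(n-1)$-cell along its boundary, all of whose faces lie in $\partial\FreeS$, so $H_{n-1}(F_1,F_0)\neq 0$.

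Ordering only by the number of bridges does not suffice either. Above a bridgeless $\rho$ there are then bridgeless $\tau=\rho\cup\{T\}\in\Conebonds$ with $T$ crossing $S_P$ (already for the rank-$3$ rose). For such $\tau$ we get $\tau\cap(\text{non-crossing part})=\rho$, which is precisely the obstacle you flag but leave open.

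What is missing is the two-index filtration of \cref{def:Conebonds}: decreasing number $p$ of bridges and, for fixed $p$, increasing $|\sigma|$. Each step then adds pairwise incomparable elements, and for $p=0$ the upper interval of $\rho$ is exactly $\{\tau\supsetneq\rho : \onesepedges(\tau)\neq\emptyset\}$. With this filtration your cone construction goes through:
\begin{enumerate}
\item By \cref{lem:collapsing_not_to_partial_preserves_bridges}, every $S\in\onesepedges(\tau)$ lies in $\tau\setminus\rho$ and is a bridge of $\Gamma(\rho\cup\{S\})$.
\item If such an $S$ sits at the cut vertex $v$, both sides of its partition of $E(v)$ are unions of components of $|\Gamma(\rho)|-v$. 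Hence it is compatible with $S_P$ when $P$ splits off one component, which has at least two half-edges because $\Gamma(\rho)$ is bridgeless. Spheres at other vertices are compatible with $S_P$ anyway.
\item Write $r(\tau)$ for the set of spheres of $\tau$ compatible with $S_P$. Then $r(\tau)\supseteq\rho\cup\onesepedges(\tau)\supsetneq\rho$, and $S_P$ is a bridge of $r(\tau)\cup\{S_P\}$.
\item So your chain $\tau\geq r(\tau)\leq r(\tau)\cup\{S_P\}\geq\rho\cup\{S_P\}$ is a valid conical contraction.
\end{enumerate}
This is a bit more direct than the paper's route. The paper first retracts onto the full subcomplex of systems obtained by adding only bridges, splits it as a join over vertices, and cites the cone point of Conant--Gerlits--Vogtmann.

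Two side remarks. $X_{>\rho}$ itself is not a subcomplex of $\lk(\rho)$ and does not split as a join. Your worry about degree $<3$ is unfounded, since every subset of a sphere system is again a sphere system.

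\paragraph{The lower fibre for $p\geq 1$.} Collapsing a bridge $e$ of $\Gamma(\sigma)$ gives a face $\sigma\setminus\{S_e\}\notin\partial\FreeS$ with one bridge fewer, by \cref{lem:collapsing_not_to_partial_preserves_bridges}. Under the decreasing-bridge ordering that your $S_P$ argument needs, this face is \emph{not} in the previous stage, so it cannot serve as a cone point.

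You must instead keep all bridges and collapse the rest. By \cref{lem:collapsing_not_to_partial_preserves_bridges}, a face of $\sigma$ lying in $\Conebonds_{p,q-1}$ either contains $\onesepedges(\sigma)$ or lies in $\partial\FreeS$. In both cases it omits a non-bridge sphere; in the second case this is because collapsing a forest of bridges cannot lower the rank (\cref{lem:bridges_are_a_forest}). Hence $\rho'\mapsto\rho'\cup\onesepedges(\sigma)$ is a monotone retraction onto the faces containing all bridges. These have the minimum $\onesepedges(\sigma)$, which is nonempty as $p\geq 1$ and lies in $\partial\FreeS$ because collapsing all non-bridges leaves a tree.

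\paragraph{A smaller error.} In your characterisation step, a component of $|\Gamma(\rho)|-v$ need not meet $v$ in two half-edges; it may meet it in a single bridge. That case has to be split off, as in \cref{lem:Conebond_via_cut_vertices}.
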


The proof of this can be seen as a warmup for the more involved case of the inclusion $\Conebonds \hookrightarrow \Ctwobonds$ that we consider in later sections.

\subsection{Graphs with bridges and cut vertices}
\label{sec:graphs_with_1_bonds}
We start with results about (graphs with) bridges and cut vertices.
We are brief here because very similar arguments appear again in the more complicated setting of 2-bonds in \cref{sec:graphs_with_2_bonds}, where we spell them out in detail.

\begin{lemma}
	\label{lem:Conebond_via_cut_vertices}
	Let $\sigma \in \FreeS$. Then $\sigma\in \Conebonds$ if and only if $\sigma \in \partial \FreeS$ or $\Gamma(\sigma)$ is not 2-vertex connected, i.e.~it has a cut vertex.
\end{lemma}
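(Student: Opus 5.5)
We reduce the statement to two graph-theoretic facts about stable graphs, one for each direction. Since $\Conebonds$ contains $\partial\FreeS$, it suffices to treat $\sigma\in\FreeS\setminus\partial\FreeS$. Then $G\coloneqq\Gamma(\sigma)$ is connected, of rank $n$ and of degree at least three, and everything larger than $\sigma$ also lies outside $\partial\FreeS$ by \cref{lem:rk_preserved_2_connected}. We must show that $\sigma$ lies below some $\tau$ with a bridge in $\Gamma(\tau)$ if and only if $G$ has a cut vertex.

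For the direction ``$\Rightarrow$'', let $\sigma\subseteq\tau$ with $H\coloneqq\Gamma(\tau)$ having a bridge $e$, so that $G$ is obtained from $H$ by collapsing the forest $F$ of edges in $\tau\setminus\sigma$. (These edges form a forest because $\sigma\notin\partial\FreeS$, so collapsing them does not change the rank.)
\begin{itemize}
\item If $e\notin F$, then $e$ survives in $G$ and is still a bridge, since collapsing a forest preserves cuts of the surviving edges. Each side of $e$ has positive rank, because a rank-zero side would be a tree and would contain a leaf of degree one. Hence an endpoint of $e$ in $G$ is a cut vertex: removing it separates the positive-rank part on the far side, or, if $e$ is also collapsed into that side, the near side.
\item If $e\in F$, then the two sides $A,B$ of $e$ in $H$ both have positive rank. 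After collapsing, $e$ becomes a single vertex $v$, and removing $v$ from $|G|$ separates the image of $A$ from the image of $B$. Both images contain at least one edge, since they have positive rank. So $v$ is a cut vertex.
\end{itemize}
Collapsing the remaining forest edges inside $A$ or $B$ keeps this separation, since it takes place within each side. It is cleanest to phrase this as: $|G|-v$ is disconnected, with components containing the images of $A$ and $B$.

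For the direction ``$\Leftarrow$'', let $v$ be a cut vertex of $G$. Split the half-edges $E(v)$ according to the components of $|G|-v$. These components fall into a nonempty set $X$ and its complement, where $X$ is a union of components. Each component contains either a loop at $v$ (two half-edges) or a subgraph of positive rank, and a positive-rank component contributes at least two half-edges, since otherwise it would contain a vertex of degree less than three or a bridge leading into a tree.
\begin{itemize}
\item Choose a partition $P=\{E_1,E_2\}$ of $E(v)$ with each $E_i$ the union of half-edges over a nonempty family of components.
\item If both parts have size at least two, the description of $\FreeS_v$ in \cref{sec:link_sphere_system} gives a sphere $S$ in the component $M_v$ realising $P$, with $\tau=\sigma\cup\{S\}\in\FreeS$.
\item In $\Gamma(\tau)$ the vertex $v$ is blown up into an edge $e_S$ whose two endpoints carry $E_1$ and $E_2$. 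By the choice of $P$, removing $e_S$ disconnects the graph, so $e_S$ is a bridge. Hence $\sigma\in\Conebonds$.
\end{itemize}

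The main obstacle is the counting check that each part of $P$ has at least two half-edges, i.e.\ that every component of $|G|-v$ meets $v$ in at least two half-edges. A component meeting $v$ in exactly one half-edge $h$ has a bridge $h$ hanging off $v$. Its far side has positive rank, since otherwise it is a tree with a leaf. In that case we do not need to blow up at all: $\sigma$ itself already has a bridge, so $\sigma\in\Conebonds$ directly. Otherwise every part has size at least two and the partition argument above applies.
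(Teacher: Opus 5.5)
Your proof is correct and follows the paper's argument essentially step by step. For ``$\Rightarrow$'' you split on whether the bridge $e$ of $\Gamma(\tau)$ is collapsed, using that both sides of $e$ have positive rank and survive the rank-preserving collapse; for ``$\Leftarrow$'' you either find a bridge already present (a component of $|G|-v$ meeting $v$ in one half-edge) or blow up $v$ into a bridge via the partition description of $\FreeS_v$, which is exactly what the paper does. One slip: your earlier claim that every positive-rank component of $|G|-v$ meets $v$ in at least two half-edges is false as stated, since a bridge from $v$ into a positive-rank subgraph is possible, but your final paragraph handles exactly this case correctly, so the argument is complete.
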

\begin{proof}
	Write $G = \Gamma(\sigma)$.
	For the first direction, let $\sigma\in \Conebonds\setminus \partial \FreeS$. We need to show that $\sigma$ contains a cut vertex. By definition, there is $\sigma \subseteq \tau\in \Conebonds$ such that $\tau$ is either in $\partial \FreeS$ or $G'\coloneqq \Gamma(\tau)$ has a separating edge. The first case cannot occur by \cref{lem:rk_preserved_2_connected}.
	So assume that $G'$ has a separating edge $e$. As $G'$ has degree at least three, every component of $G'-e$ has non-trivial fundamental group.
	As $G$ and $G'$ have isomorphic fundamental groups, this implies that none of these components can be collapsed to a single vertex in $G$. It follows that $G$ has a cut vertex: if $e$ is not collapsed, then any of its endpoints is a cut vertex and otherwise, the image of $e$ under the collapse map is a cut vertex.

	For the converse, first note that if $\sigma\in \partial \FreeS$, then clearly also $\sigma \in \Conebonds$. So assume that $\sigma \in\FreeS\setminus \partial \FreeS$ is such that $G\coloneqq \Gamma(\sigma)$ has a cut vertex $v$. We need to show that there is $G\leq G'\in\Conebonds$ that contains a separating edge.
	If a connected component of $|G|\setminus \ls v \rs$ is adjacent to $v$ with only one half-edge, then this half-edge is the initial part of a separating edge and we are done.
	But if every component of $|G|\setminus \ls v \rs$ is adjacent to $v$ with at least two half-edges, we can introduce a separating edge at $v$ that separates one of the components from the other ones. The result is a graph $G'$ with a separating edge that has fundamental group $F_n$ and -- because of the condition of the half-edges -- degree at least three, so $G\leq G'\in \Conebonds$.
\end{proof}

For a graph $G$, we let $\onesepedges(G)\subseteq E(G)$ be the \emph{set of separating edges} in $G$. 
For $\sigma \in \FreeS\setminus \partial \FreeS$, we let $\onesepedges(\sigma)$ be the subset of $\sigma$ corresponding to the edges that form separating edges in the dual graph,
\begin{equation*}
	\onesepedges(\sigma) = \ls S\in \sigma \mid e_S \in \onesepedges(\Gamma(\sigma)) \rs.
\end{equation*}
We record two elementary observations about these sets of separating edges.

\begin{lemma}
	\label{lem:collapsing_not_to_partial_preserves_bridges}
	Let $\rho, \sigma \in \FreeS\setminus \partial \FreeS$ with $\rho \subseteq \sigma$. Then $\onesepedges(\rho) = \onesepedges(\sigma) \cap \rho$.
\end{lemma}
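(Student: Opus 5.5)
The plan is to use that $\Gamma(\rho)$ is obtained from $\Gamma(\sigma)$ by collapsing the edges $e_S$ for $S\in\sigma\setminus\rho$, and that both graphs are connected of rank $n$ with degree at least three, since $\rho,\sigma\notin\partial\FreeS$. Write $G=\Gamma(\sigma)$, $G'=\Gamma(\rho)$ and $c\colon G\to G'$ for the collapse map. Let $F\subseteq G$ be the subgraph formed by the collapsed edges $e_S$, $S\in\sigma\setminus\rho$, together with all vertices. The first step is to note that every connected component of $F$ is a tree. Otherwise $c$ would collapse a cycle, so $\rk(G')<\rk(G)=n$, or it would produce a vertex with a non-trivial vertex group, putting $\rho$ in $\partial\FreeS$. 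Hence $c$ is a homotopy equivalence, and it restricts to a bijection between $E(G)\setminus\{e_S : S\in\sigma\setminus\rho\}$ and $E(G')$, sending $e_S$ to $e_S$ for $S\in\rho$.

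For $S\in\rho$, I then show that $e_S$ is separating in $G$ if and only if it is separating in $G'$. Since all graphs involved are connected, an edge $e$ is separating exactly when it lies on no cycle, equivalently when removing it drops the rank by zero rather than one. Equivalently, $G-e$ is disconnected if and only if $\rk(G-e)=\rk(G)$, using $\chi(G-e)=\chi(G)+1$.

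The key observation is that $G'-e_S$ is obtained from $G-e_S$ by collapsing the same forest $F$, which does not contain $e_S$. Collapsing a forest preserves the number of connected components: each tree component of $F$ lies in a single component of $G-e_S$ and is contracted to a point. So $G-e_S$ and $G'-e_S$ have the same number of components, and the equivalence follows. This gives $\onesepedges(\rho)=\onesepedges(\sigma)\cap\rho$.

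The only point requiring care is the first step, namely justifying that the collapsed edges form a forest. I would argue this directly from the definition of $\partial\FreeS$. If the collapsed edges contained a cycle, then some component of $M-\rho$ would contain a non-separating sphere of $\sigma$ and so have non-trivial fundamental group, contradicting $\rho\notin\partial\FreeS$. An equivalent route is rank counting: $\rk(\Gamma(\rho))$ plus the sum of the vertex-group ranks equals $n$, and the vertex-group ranks vanish for $\rho\notin\partial\FreeS$.
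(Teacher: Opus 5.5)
Your proof is correct and is the argument the paper intends: the paper's proof simply says the claim is easy to see because $\Gamma(\rho)$ is obtained from $\Gamma(\sigma)$ by collapsing the edges of $\sigma\setminus\rho$, and your comparison of the number of components of $G-e_S$ and $G'-e_S$ is the natural way to make that precise. Your forest step is harmless but not actually needed, since collapsing any set of edges that avoids $e_S$ already preserves the number of connected components of $G-e_S$.
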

\begin{proof}
	This is easy to see using that $\Gamma(\rho)$ is obtained from $\Gamma(\sigma)$ by collapsing the edges corresponding to $\sigma\setminus \rho$.
\end{proof}

\begin{lemma}
	\label{lem:bridges_are_a_forest}
	If $\sigma \in \FreeS\setminus \partial \FreeS$, then $\onesepedges(\sigma)$ forms a forest inside $\Gamma(\sigma)$.
\end{lemma}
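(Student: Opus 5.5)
We must show that the set of separating edges of $G\coloneqq\Gamma(\sigma)$ contains no cycle, i.e.\ the subgraph of $G$ formed by the edges in $\onesepedges(\sigma)$ (with their endpoints) is a forest. The plan is a direct argument by contradiction using only the definition of a bridge; the hypothesis $\sigma\in\FreeS\setminus\partial\FreeS$ serves only to guarantee that $G$ is connected (and has rank $n$), so that being separating is meaningful.

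Suppose the edges of $\onesepedges(\sigma)$ contain a cycle. First deal with the degenerate cycles. A loop $e$ at a vertex $v$ is never separating: removing $e$ from $G$ leaves $v$ and all other vertices in place, and any path in $G$ that used $e$ can simply skip it. A pair of parallel edges $e,e'$ between the same two vertices cannot contain a bridge either: if $e$ were removed, $e'$ still connects its endpoints, so $G-e$ is connected.

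It remains to treat a genuine cycle $C$ of length at least three, formed by distinct edges $e_1,\dots,e_k$ through distinct vertices. Pick any $e=e_1\in C$ with endpoints $x,y$. The remaining edges $e_2,\dots,e_k$ form a path from $y$ back to $x$ that avoids $e$. Now every path in $G$ that passes through $e$ can be rerouted along this path, so every pair of vertices that was connected in $G$ stays connected in $G-e$. Since $G$ is connected, so is $G-e$, contradicting $e\in\onesepedges(\sigma)$. Hence the bridges contain no cycle, and $\onesepedges(\sigma)$ forms a forest.

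There is no real obstacle; the only point needing care is that graphs may have loops and multiple edges, so one must check that loops and $2$-cycles of parallel edges never consist of bridges. This is exactly the degenerate case handled above.
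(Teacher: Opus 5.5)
Your proof is correct, but it takes a different route from the paper. You use the direct fact that an edge lying on a cycle of a connected graph is never a bridge, handling loops, parallel pairs and longer cycles separately. This proves something stronger than required: no edge of $\onesepedges(\sigma)$ lies on \emph{any} cycle of $\Gamma(\sigma)$, not just on a cycle made of bridges.

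The paper argues by counting instead. Deleting a bridge $e$ removes one edge and adds one component, so $b_1(G-e)=b_1(G)$. The remaining bridges of $G$ are exactly the bridges of $G-e$. By induction, deleting all of $\onesepedges(G)$ keeps $b_1$ fixed and adds exactly $|\onesepedges(G)|$ components, which forces the bridges to be acyclic.

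What the paper's route buys is the rank statement $b_1\bigl(\Gamma(\sigma)-\onesepedges(\sigma)\bigr)=n$ for free. That is the form needed later, e.g.\ to conclude $\onesepedges(\sigma)\in\partial\FreeS$; acyclicity of the bridges alone would not give this. Your argument is more elementary and needs no induction. Your stronger conclusion also recovers the rank statement, since every cycle of $\Gamma(\sigma)$ then lies in $\Gamma(\sigma)-\onesepedges(\sigma)$.
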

\begin{proof}
	This follows from the following observation: If $G$ is a graph and $e\in \onesepedges(G)$ is a separating edge, then $G-e$ has one edge less than $G$ and one connected component more. Hence, the first Betti numbers of the graphs agree, $b_1(G-e) = b_1(G)$. Furthermore, we have $\onesepedges(G-e)  = \onesepedges(G)\setminus\ls e \rs$, so the claim follows by induction.
\end{proof}

\subsection{Filtration by separating edges}

In order to prove \cref{thm:hom_eq_partialS_C}, we define a filtration of $\Conebonds$ by subposets $C_{p,q}$, where $p$ and $q$ are non-negative integers.
This can be seen as a homotopical version of the spectral sequence argument from \cite[Section 2]{Conant2005}.

\begin{definition}
	\label{def:Conebonds}
	Let $\Conebonds_{p,q}$ be the subposet of $\Conebonds$ consisting of all $\sigma$ such that 
		\begin{enumerate}
			\item $\sigma\in \partial \FreeS$, or
			\item $|\onesepedges(\sigma)|> p$, or
			\item $|\onesepedges(\sigma)| = p$ and $|\sigma| \leq p+q$.
		\end{enumerate}
\end{definition}

In words, $C_{p,q}$ consists of those $\sigma\in \Conebonds$ that either lie in $\partial \FreeS$, contain more than $p$ separating spheres, or contain exactly $p$ separating but at most $q$ non-separating spheres.

\begin{lemma}
	\label{lem:filtration_of_Conebonds}
	\begin{enumerate}		
		\item \label{it:Cpq_partial}If $p> 2n-3$, then $\Conebonds_{p,q} = \partial \FreeS$.
		\item \label{it:Cpq_Cplus1_q}If $q\leq n-1$, we have $\Conebonds_{p,q} = \Conebonds_{p+1,3n-4-p}$.
		\item \label{it:C0q}If $q\geq 3n-4$, then $\Conebonds_{0,q} = \Conebonds$.
	\end{enumerate}
\end{lemma}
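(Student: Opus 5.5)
The proof reduces to elementary counting for graphs $G=\Gamma(\sigma)$ with $\sigma\in\FreeS\setminus\partial\FreeS$, which are connected, of rank $n$ and of degree at least three. The basic bounds are these. An Euler characteristic count gives $|E|-|V| = n-1$, and the degree condition gives $2|E|\geq 3|V|$. Together these yield $|V|\leq 2n-2$ and $|\sigma| = |E|\leq 3n-3$.

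By \cref{lem:bridges_are_a_forest}, $\onesepedges(\sigma)$ is a forest in $G$, so $|\onesepedges(\sigma)|\leq |V|-1\leq 2n-3$. Moreover, the complement of the bridges has the same first Betti number $n$. So the non-separating edges together with all vertices form a subgraph of rank $n$, which contains at least $n$ edges. Hence $|\sigma|-|\onesepedges(\sigma)|\geq n$. Similarly, $|\sigma| - |\onesepedges(\sigma)| \leq 3n-3-|\onesepedges(\sigma)|$, and more precisely $|\sigma|\leq 3n-3$ overall.

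\textbf{Item \ref{it:Cpq_partial}.} For $p>2n-3$, no $\sigma\notin\partial\FreeS$ can have $|\onesepedges(\sigma)|\geq p$, by the forest bound. So the only elements of $\Conebonds_{p,q}$ are those in $\partial\FreeS$. Conversely, $\partial\FreeS\subseteq\Conebonds_{p,q}$ always holds by condition (1).

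\textbf{Item \ref{it:C0q}.} For $p=0$, conditions (2) and (3) read: $|\onesepedges(\sigma)|>0$, or there are no bridges and $|\sigma|\leq q$. Since $|\sigma|\leq 3n-3$, the condition $|\sigma|\leq q$ is automatic once $q\geq 3n-3$. For the stated bound $q\geq 3n-4$, I would sharpen the count. An element of $\Conebonds\setminus\partial\FreeS$ with no bridges has a cut vertex by \cref{lem:Conebond_via_cut_vertices}. It can therefore be blown up to a graph $G'$ with a separating edge while keeping the degree at least three. So $|\sigma|\leq 3n-4$, which gives $\Conebonds_{0,q}=\Conebonds$.

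\textbf{Item \ref{it:Cpq_Cplus1_q}.} I read the third condition as bounding the number of non-separating spheres by $q$, consistent with the verbal description after \cref{def:Conebonds}. Both posets contain $\partial\FreeS$ and all $\sigma$ with more than $p+1$ bridges. For $\Conebonds_{p+1,3n-4-p}$, the remaining elements are those with exactly $p+1$ bridges and at most $3n-4-p$ non-separating spheres. That last constraint is automatic: the total is at most $3n-3$, so there are at most $3n-3-(p+1)=3n-4-p$ non-separating spheres. For $\Conebonds_{p,q}$, the remaining elements are those with exactly $p+1$ bridges, which are all included, together with those having exactly $p$ bridges and at most $q$ non-separating spheres. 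The latter set is empty when $q\leq n-1$, because there are always at least $n$ non-separating spheres. The two posets therefore coincide.

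\textbf{Main obstacle.} The delicate point is getting the exact indexing conventions right, in particular whether $|\sigma|\leq p+q$ should be read as $q$ counting non-separating spheres, and the sharpened bound $3n-4$ in item \ref{it:C0q}. For the latter I would rely on the blow-up argument of \cref{lem:Conebond_via_cut_vertices}, which adds an edge, and on the global bound $|E|\leq 3n-3$ for trivalent graphs.
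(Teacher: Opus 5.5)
Your proof is correct and is essentially the paper's argument: the forest bound $|\onesepedges(\sigma)|\le |V(\Gamma(\sigma))|-1\le 2n-3$, the bound of at least $n$ non-separating spheres, and $|\sigma|\le 3n-3$ give items 1 and 2 exactly as in the paper. For item 3 you obtain $|\sigma|\le 3n-4$ by blowing the cut vertex up into a bridge, which gives a graph with one more edge that still has rank $n$ and degree at least three; the paper instead notes directly that a cut vertex in a bridgeless graph has valence at least four, and these two observations are equivalent.
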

\begin{proof}
	For every $\sigma\in \FreeS\setminus \partial \FreeS$, the dual graph $\Gamma(\sigma)$ has at most $3n-3$ edges or (equivalently) at most $2n-2$ vertices (see e.g.~\cite{CV:Moduligraphsautomorphisms}).
	This implies that $\Conebonds_{0,q} = \Conebonds$ for $q\geq 3n-3$. The slightly sharper bound in \cref{it:C0q} follows because if $\sigma \in \Conebonds_{0,q}\setminus \partial \FreeS$, then $\Gamma(\sigma)$ has at least one cut vertex by \cref{lem:Conebond_via_cut_vertices}, but no separating edge. The latter implies that every cut vertex needs to have valence at least 4, so $\Gamma(\sigma)$ has at most $3n-4$ edges.

	By \cref{lem:bridges_are_a_forest}, for every $\sigma\in \Conebonds\setminus \partial \FreeS$, the subset $\onesepedges(\sigma)$ forms a forest in $\Gamma(\sigma)$. This implies that 
	\begin{equation*}
		|\onesepedges(\sigma)| \leq |V(\Gamma(\sigma))| - 1 \leq 2n-3 \text{ and } |\sigma|-|\onesepedges(\sigma)|\geq n.
	\end{equation*}
	From these, \cref{it:Cpq_partial} and \cref{it:Cpq_Cplus1_q} follow.
\end{proof}

The previous lemma implies that we get a filtration
\begin{multline*}
	\partial \FreeS = \Conebonds_{2n-2,\bullet} = \Conebonds_{2n-3,n-1} \subseteq \Conebonds_{2n-3,n} =  \Conebonds_{2n-4,n-1} \subseteq \cdots
	\\
	 \cdots \subseteq \Conebonds_{1,3n-4} = \Conebonds_{0,n-1} \subseteq \cdots \subseteq \Conebonds_{0,3n-5} \subseteq \Conebonds_{0,3n-4} = \Conebonds.
\end{multline*}
Note that every inclusion here is of the form $f:C_{p,q-1}\hookrightarrow C_{p,q}$.
In the first part, $\partial \FreeS \hookrightarrow \Conebonds_{1,3n-4} = \Conebonds_{0,n-1}$, all graphs in $\FreeS$ that are not 2-edge connected are added, starting with the ones that have the biggest number of separating edges. In the second part, $\Conebonds_{0,n-1} \hookrightarrow \Conebonds$, the graphs that are 2-edge connected but not 2-vertex connected are added.
To prove \cref{thm:hom_eq_partialS_C}, we will show that all of the inclusions above are homotopy equivalences.

\subsection{The fibres}

To prove that $f:C_{p,q-1}\hookrightarrow C_{p,q}$ is a homotopy equivalence, we want to use Quillen's fibre theorem as stated in \cref{thm:Quillen_fibre}. We hence need to study the connectivity of the fibres, which is what we do in the following two lemmas.
In their proofs, we use that $\sigma\in \Conebonds_{p,q}\setminus \Conebonds_{p,q-1}$ if and only if the following hold:
\begin{equation*}
	\sigma\in \FreeS\setminus \partial \FreeS,\, |\twosepedges(\sigma)|=p \text{ and } |\sigma| = p+q.
\end{equation*}

\begin{lemma}
	\label{lem:lower_fibre_one_bonds}
	If $p\geq 1$, then for all $\sigma\in \Conebonds_{p,q}\setminus \Conebonds_{p,q-1}$, the lower fibre $f_{\subset\sigma}\coloneqq f^{-1}((\Conebonds_{p,q})_{\subseteq \sigma})$ is contractible.
\end{lemma}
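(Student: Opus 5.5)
Throughout, the notation $\twosepedges(\sigma)$ in the characterisation before the lemma is read as $\onesepedges(\sigma)$.

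The lower fibre $f_{\subset\sigma}$ is the set of proper faces $\rho\subsetneq\sigma$ lying in $\Conebonds_{p,q-1}$. Since $\sigma$ lies in $\Conebonds_{p,q}$, every face $\rho\subseteq\sigma$ lies in $\Conebonds$. So $\rho\subsetneq\sigma$ belongs to the fibre if and only if one of the following holds: $\rho\in\partial\FreeS$; or $|\onesepedges(\rho)|>p$; or $|\onesepedges(\rho)|=p$ and $|\rho|\le p+q-1$.

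By \cref{lem:collapsing_not_to_partial_preserves_bridges}, every $\rho\subseteq\sigma$ with $\rho\notin\partial\FreeS$ satisfies $\onesepedges(\rho)=\onesepedges(\sigma)\cap\rho$, so $|\onesepedges(\rho)|\le p$. The middle condition therefore never occurs. Moreover, if $|\onesepedges(\rho)|=p$ then $\rho$ contains all of $\onesepedges(\sigma)$, and $\rho\neq\sigma$ forces $|\rho|\le p+q-1$ automatically. Hence
\begin{equation*}
 f_{\subset\sigma}=\ls \rho\subsetneq\sigma \mid \rho\in\partial\FreeS \text{ or } \onesepedges(\sigma)\subseteq\rho\rs.
\end{equation*}

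The plan is to show that $f_{\subset\sigma}$ is a cone. Write $B\coloneqq\onesepedges(\sigma)$, which is nonempty because $p\geq1$. I would use the monotone map $\rho\mapsto\rho\cup B$ followed by a constant map. The key graph-theoretic input is the following claim: if $\rho\subsetneq\sigma$ lies in $\partial\FreeS$, then $\rho\cup B$ also lies in $\partial\FreeS$. In particular $\rho\cup B\neq\sigma$, because $\sigma\notin\partial\FreeS$.

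To prove the claim, note that $\Gamma(\rho\cup B)$ is obtained from $\Gamma(\sigma)$ by collapsing the edges of $\sigma\setminus(\rho\cup B)$, and these are all non-separating. The system $\rho\in\partial\FreeS$ means that the subgraph collapsed to get $\Gamma(\rho)$, namely the edges of $\sigma\setminus\rho$, contains a cycle. We need a cycle avoiding $B$. Bridges lie on no cycle, so a cycle in the edge set $\sigma\setminus\rho$ uses no edge of $B$. That cycle is therefore contained in $\sigma\setminus(\rho\cup B)$, so $\rho\cup B\in\partial\FreeS$. This is the step where \cref{lem:bridges_are_a_forest} enters; it is the conceptual point that bridges lie on no cycle.

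Consequently, the map $g(\rho)=\rho\cup B$ sends $f_{\subset\sigma}$ into itself. Indeed, if $B\subseteq\rho$ then $g(\rho)=\rho$, and otherwise $g(\rho)\in\partial\FreeS$ is a proper face of $\sigma$. The map $g$ is a poset map with $g(\rho)\ge\rho$, so it is monotone and $f_{\subset\sigma}\simeq g(f_{\subset\sigma})$. The image consists of the $\rho$ with $B\subseteq\rho\subsetneq\sigma$. This image has the minimum element $B$, provided $B\neq\sigma$; that holds because $|\sigma|-|B|\ge n\geq 1$, as established in the proof of \cref{lem:filtration_of_Conebonds}. A poset with a minimum is contractible, so $f_{\subset\sigma}$ is contractible.

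The main obstacle is the claim that adding the bridges to a boundary face keeps it in $\partial\FreeS$. It requires identifying $\partial\FreeS$-membership of a face with the collapsed subgraph containing a cycle, in particular making sure a non-simply connected component arises exactly from such a cycle.
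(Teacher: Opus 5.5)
Your proof is correct and follows the paper's argument: the same retraction $\rho\mapsto\rho\cup\onesepedges(\sigma)$ onto the subposet $\fibrenonone$ of proper faces containing $\onesepedges(\sigma)$, which is contractible because it has the minimum $\onesepedges(\sigma)$. The only differences are cosmetic. You first rewrite the fibre explicitly, and you check $\rho\cup\onesepedges(\sigma)\neq\sigma$ through the slightly stronger statement $\rho\cup\onesepedges(\sigma)\in\partial\FreeS$; this is the same bridges-lie-on-no-cycle reasoning behind the paper's appeal to \cref{lem:bridges_are_a_forest} (and you correctly read the $\twosepedges$ in the preceding characterisation as a typo for $\onesepedges$).
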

\begin{proof}
	The poset $f_{\subset\sigma}$ is given by all $\rho\subseteq\sigma$ that lie in $C_{p,q-1}$.
	Let $\fibrenonone\subseteq \FreeS_{\subset\sigma}$ be the subposet consisting of all sphere systems that can be obtained from $\sigma$ by removing a non-empty set of non-separating spheres,
	\begin{equation*}
		\fibrenonone\coloneqq \ls \emptyset\neq\rho\subset \sigma \mid \onesepedges(\sigma)\subseteq \rho\rs.
	\end{equation*}
	We will prove the claim by showing the following three things: $\fibrenonone$ is a subposet of $f_{\subset\sigma}$; it is a deformation retraction of $f_{\subset\sigma}$, so in particular has the same homotopy type; it is contractible if $p\geq 1$.

	For the first point, let $\rho \in \fibrenonone$. We need to show that $\rho \in C_{p,q-1}$. If $\rho\in \partial \FreeS$, then this is true by definition.
	If $\rho\not\in \partial \FreeS$, then by \cref{lem:collapsing_not_to_partial_preserves_bridges}, we have $\onesepedges(\rho) = \onesepedges(\sigma)\cap \rho = \onesepedges(\sigma)$.
	Hence, $|\onesepedges(\rho)| = |\onesepedges(\sigma)| = p$ and as $\rho\subset \sigma$, we have $|\rho| < |\sigma| = p+q$, so $\rho \in C_{p,q-1}$.
	
	For the second point, let $\rho\in f_{\subset\sigma}$. Then $\rho\in C_{p,q-1}$, so it either has (at least) $p$ separating spheres or it lies in $\partial \FreeS$.
	If it has $p$ separating spheres, then by \cref{lem:collapsing_not_to_partial_preserves_bridges}, we have $\onesepedges(\rho)= \onesepedges(\sigma)$, so $\rho$ is obtained from $\sigma$ by removing a non-empty set of non-separating spheres (i.e.~$\rho\in \fibrenonone$).
	If on the other hand $\rho\in \partial \FreeS$, then by \cref{lem:bridges_are_a_forest}, there must be at least one non-separating sphere of $\sigma$ that is not contained in $\rho$.
	In either case, we get $\sigma\setminus\rho\not\subseteq \onesepedges(\sigma)$.
	This implies that the assignment $\rho \mapsto \rho\cup \onesepedges(\sigma)$ defines a monotone poset map $f_{\subset\sigma} \to \fibrenonone$. This poset map is the identity on $\fibrenonone$ and hence defines a deformation retraction.

	For the third point, note that \cref{lem:bridges_are_a_forest} implies that $\onesepedges(\sigma)\in \partial \FreeS$. As $\sigma\not\in \partial \FreeS$, we have $\onesepedges(\sigma)\subset\sigma$.
	But as $p\geq 1$, we also have $\onesepedges(\sigma)\neq \emptyset$, so this forms a minimal element in $\fibrenonone$, which hence is contractible.
\end{proof}

The above lemma implies that the inclusion $\partial \FreeS \hookrightarrow \Conebonds_{1,3n-4} = \Conebonds_{0,n-1}$ that thickens $\partial \FreeS$ by adding the graphs that are not 2-edge connected, is a homotopy equivalence.
For the second thickening $\Conebonds_{0,n-1} \hookrightarrow \Conebonds$ that adds the graphs that are 2-edge connected but not 2-vertex connected, we need the following lemma.

\begin{lemma}
	\label{lem:upper_fibre_one_bonds}
	If $p=0$, then for all $\rho\in \Conebonds_{p,q}\setminus \Conebonds_{p,q-1}$, the upper interval $\fibreallone\coloneqq (\Conebonds_{p,q})_{\supset \rho}$ is contractible.
\end{lemma}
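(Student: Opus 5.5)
The plan is to identify the upper interval $\fibreallone$ explicitly, pass to the subcomplex of the link spanned by separating spheres, and show that this subcomplex has a cone point.

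\emph{Step 1: identify the interval.} Let $\rho \in \Conebonds_{0,q}\setminus\Conebonds_{0,q-1}$. Then $\rho\in\FreeS\setminus\partial\FreeS$, $\onesepedges(\rho)=\emptyset$ and $|\rho|=q$. Moreover $G\coloneqq\Gamma(\rho)$ has a cut vertex by \cref{lem:Conebond_via_cut_vertices}.

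Take $\sigma\supsetneq\rho$. By \cref{lem:rk_preserved_2_connected}, $\sigma\notin\partial\FreeS$, and $|\sigma|>q$. So $\sigma\in\Conebonds_{0,q}$ if and only if $\onesepedges(\sigma)\neq\emptyset$. Such a $\sigma$ automatically lies in $\Conebonds$.

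By \cref{lem:collapsing_not_to_partial_preserves_bridges}, $\onesepedges(\sigma)\cap\rho=\onesepedges(\rho)=\emptyset$. Hence every separating sphere of $\sigma$ is a new sphere $S\in\sigma\setminus\rho$. Applying the same lemma to $\rho\cup\{S\}\subseteq\sigma$ shows that $S$ is separating in $\sigma$ if and only if it is separating in $\rho\cup\{S\}$. So separatedness is a property of the single vertex $S$ of $\lk_{\FreeS}(\rho)$, and does not depend on the rest of $\sigma$.

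Let $A$ be the set of such separating spheres. Under the identification $\FreeS_{\supset\rho}\cong\lk_{\FreeS}(\rho)=\ast_{v\in V(G)}\FreeS_v$, the interval $\fibreallone$ becomes the poset of simplices of this join that contain at least one vertex of $A$.

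\emph{Step 2: retract to the $A$-part.} Consider the assignment $\sigma\mapsto\sigma\cap A$. It is well defined on our poset, since the intersection is nonempty. It is order preserving, and it satisfies $\sigma\cap A\subseteq\sigma$. It is therefore a monotone poset map, and it deformation retracts the poset onto the poset of nonempty simplices all of whose vertices lie in $A$.

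Since $\FreeS$ is flag, this is the full subcomplex of the link on $A$. It decomposes as the join $\ast_v X_v$, where $X_v\subseteq\FreeS_v$ is the full subcomplex on the spheres of $A$ at $v$.

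\emph{Step 3: describe $X_v$ at a cut vertex.} Let $v$ be a cut vertex of $G$. Group the half-edges $E(v)$ into blocks $B_1,\dots,B_k$ ($k\ge 2$) according to the component of $|G|-v$ they enter.

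A sphere at $v$ with partition $P$ of $E(v)$ is separating exactly when each side of $P$ is a union of blocks. Each block contains at least two half-edges: a block with one half-edge would give a bridge, and $\onesepedges(\rho)=\emptyset$. Hence every bipartition of $\{B_1,\dots,B_k\}$ into two nonempty parts gives a valid sphere, since both sides have at least two elements. So $X_v$ is nonempty.

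\emph{Step 4: find a cone point.} The sphere with partition $\{B_1 \mid E(v)\setminus B_1\}$ is compatible with every other separating partition $P$: the side $B_1$ lies inside one side of $P$, because sides of $P$ are unions of blocks. By flagness, this sphere is a cone point of $X_v$, so $X_v$ is contractible.

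A join with a contractible factor is contractible. Hence $\ast_v X_v$, and therefore $\fibreallone$, is contractible.

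The main point requiring care is Step 1. It rests on two facts: separatedness of a new edge is unaffected by the other blow-ups, which \cref{lem:collapsing_not_to_partial_preserves_bridges} gives; and the blocks at a cut vertex have size at least two, so that the cone-point partition is admissible.
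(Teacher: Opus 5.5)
Your proof is correct and follows essentially the same route as the paper: your retraction $\sigma\mapsto\sigma\cap A$ is the paper's map $\sigma\mapsto\rho\cup\onesepedges(\sigma)$ onto the subcomplex $\fibreonlyone$, which is then split as a join over the vertices of $\Gamma(\rho)$ and shown to have a contractible factor at a cut vertex. The differences are minor. You obtain fullness at once from the observation that being separating is a property of the single new sphere, whereas the paper checks downward closure and fullness separately. You also prove the cone point at the cut vertex explicitly instead of citing Conant--Gerlits--Vogtmann.
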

\begin{proof}
	Let $\fibreonlyone\subseteq \FreeS_{\supset\rho}$ be the subposet consisting of all elements that can be obtained from  $\rho$ by adding a non-empty set of separating spheres,
	\begin{equation*}
		\fibreonlyone\coloneqq \ls \rho \subset \sigma\in \FreeS \mid \sigma\setminus \rho \subseteq \onesepedges(\sigma) \rs.
	\end{equation*}
	Again, we prove the claim in three steps: We show that $\fibreonlyone$ is contained in $\fibreallone$, that it is a deformation retraction and that it is contractible if $p=0$.

	First assume that $\sigma \in \fibreonlyone$. \cref{lem:rk_preserved_2_connected} implies that $\sigma\not\in \partial \FreeS$, so by \cref{lem:collapsing_not_to_partial_preserves_bridges}, we have $|\onesepedges(\sigma)|>|\onesepedges(\rho)| = p$ and we get $\sigma\in \Conebonds_{p,q}$. This shows that $\fibreonlyone\subseteq \fibreallone$.

	For the second point, assume that $\sigma\in \fibreallone$. Again by \cref{lem:rk_preserved_2_connected}, we have $\sigma\not \in \partial \FreeS$. As $\sigma\in C_{p,q}$ and $|\sigma|>|\rho| = p+q$, we have $|\onesepedges(\sigma)|>p$. But by \cref{lem:collapsing_not_to_partial_preserves_bridges}, we have $\onesepedges(\rho) = \onesepedges(\sigma)\cap \rho$, so as $|\onesepedges(\rho)|=p$, we have $\onesepedges(\sigma)\not \subseteq \rho$. 
	This allows us to define a monotone poset map $\fibreallone \to \fibreonlyone$ by removing all non-separating spheres that are not contained in $\rho$, i.e.~sending $\sigma$ to $\rho \cup \onesepedges(\sigma)$. That this image is indeed an element of $\fibreonlyone$ and that the map is monotone follows from \cref{lem:collapsing_not_to_partial_preserves_bridges}.
	This poset map is the identity on $\fibreonlyone$ and hence defines a deformation retraction.

	For the last step, we first observe that $\fibreonlyone$ is a full subcomplex of $\lk_{\FreeS}(\rho) \cong \FreeS_{\supset \rho} $ (see \cref{sec:link_sphere_system} for a description of this complex): 
	If $\tau \in \fibreonlyone$ and $\rho \subset \sigma\subseteq \tau$, then 
	\begin{equation*}
		\sigma \setminus \rho = (\tau\setminus \rho)\cap \sigma \subseteq \onesepedges(\tau)\cap \sigma = \onesepedges(\sigma),
	\end{equation*}
	where the inclusion uses that $\tau\in \fibreonlyone$ and the last equality follows from \cref{lem:collapsing_not_to_partial_preserves_bridges}. Hence, we have $\sigma\in \fibreonlyone$. This proves that $\fibreonlyone$ is downwards closed in $\FreeS_{\supset \rho}$ and hence a subcomplex. To see that it is full, assume that we have $\tau \in \FreeS_{\supset \rho}$ such that $\rho\cup \ls S \rs\in \fibreonlyone$ for all $S\in \tau\setminus \rho$. Then for all such $S$, we get
	\begin{equation*}
		S\in \onesepedges(\rho\cup \ls S\rs) = \onesepedges(\tau)\cap \left(\rho\cup \ls S \rs\right),
	\end{equation*}
	where the equality follows again from \cref{lem:collapsing_not_to_partial_preserves_bridges}. Hence, we have $\tau\setminus \rho \subseteq \onesepedges(\tau)$, i.e.~$\tau\in \fibreonlyone$. This proves that $\fibreonlyone$ is a full subcomplex of $\FreeS_{\supset \rho}$.

	As a consequence, the join decomposition of $\lk_{\FreeS}(\rho)\cong \FreeS_{\supset \rho}$ described in \cref{sec:link_sphere_system} induces a join decomposition of $\fibreonlyone$ that is of the form 
	\begin{equation*}
		\fibreonlyone \cong \ast_{v\in V(G)} \mcP_v,
	\end{equation*}
	where $G= \Gamma(\rho)$ is the dual graph and for a vertex $v\in G$ (i.e.~a component of $M-\rho$), $\mcP_v$ is the subcomplex of all sphere systems $\sigma_v$ in $v$ such that $\rho \cup \sigma_v\in \fibreonlyone$.
	To show that $\fibreonlyone$ is contractible, it hence suffices to show that there is some $v\in V(G)$ such that $\mcP_v$ is contractible.
	That this is the case if $p=0$, i.e.~$G$ has a cut vertex but no separating edge, is shown in \cite[Proof of Lemma 2.1]{Conant2005}.\footnote{The complex $\mcP_v$ is denoted by $X^v$ in that article. It is the poset of all graphs that are obtained from $G$ by blowing up $v$ to a tree of separating edges. The authors describe it in terms of a subcomplex of the complex $\FreeS_v$ of partitions that already showed up in \cref{sec:link_sphere_system}. They then show that this subcomplex is contractible by showing that it has a cone point.}
\end{proof}

\begin{proof}[Proof of \cref{thm:hom_eq_partialS_C}]
	It is enough to show that for all $p$ and $q$, the inclusion $f:C_{p,q-1}\hookrightarrow C_{p,q}$ is a homotopy equivalence.
	To show this, we apply \cref{thm:Quillen_fibre}: We claim that for every $\sigma\in C_{p,q}$, the join 
	\begin{equation}
	\label{eq:fibre_join_one_bonds}
		f^{-1}((C_{p,q})_{\subseteq \sigma})\ast (C_{p,q})_{\supset \sigma}
	\end{equation}
	is contractible.
	If $\sigma\in C_{p,q-1}$, this is clear because then $f^{-1}((C_{p,q})_{\subseteq \sigma})$ has the maximal element $\sigma$. 
	For $\sigma\in C_{p,q}\sm C_{p,q-1}$, the claim follows from \cref{lem:lower_fibre_one_bonds} and \cref{lem:upper_fibre_one_bonds}.
	Hence, the join in \cref{eq:fibre_join_one_bonds} is contractible for all $\sigma\in C_{p,q}$, so by \cref{thm:Quillen_fibre}, the inclusion $f:C_{p,q-1}\hookrightarrow C_{p,q}$ is $k$-connected for all $k\geq 0$, so a homotopy equivalence. 
\end{proof}

\section{Strategy for the 2-bond thickening $\Conebonds\hookrightarrow \Ctwobonds$}
\label{sec:overview_C'_C}
\label{sec:intuition}

We now turn to the more involved case of the inclusion $\Conebonds\hookrightarrow \Ctwobonds$. Our aim here is to show the following:

\begin{theorem}
\label{thm:C'_he_C}
The inclusion $\Conebonds\hookrightarrow \Ctwobonds$ is $(2n-3)$-connected.
\end{theorem}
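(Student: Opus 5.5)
We will mimic the proof of \cref{thm:hom_eq_partialS_C}, filtering $\Ctwobonds$ by the number of ``2-separating'' edges. For $\sigma\in\FreeS\setminus\Conebonds$, let $\twosepedges(\sigma)\subseteq\sigma$ be the set of spheres whose edge in $\Gamma(\sigma)$ lies in some 2-bond. Since $\Gamma(\sigma)$ is 2-edge connected, lying in a common 2-bond is an equivalence relation on these edges; its classes are the cyclic ``chains'' of the graph.

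The first step is to prove the 2-bond analogues of the lemmas in \cref{sec:graphs_with_1_bonds}. In analogy with \cref{lem:Conebond_via_cut_vertices}, $\sigma\in\Ctwobonds\setminus\Conebonds$ should hold exactly when $\Gamma(\sigma)$ is 2-vertex connected but has a 2-vertex cut. In analogy with \cref{lem:collapsing_not_to_partial_preserves_bridges}, for $\rho\subseteq\sigma$ both outside $\Conebonds$ we want $\twosepedges(\rho)=\twosepedges(\sigma)\cap\rho$. Collapsing edges cannot create a new 2-bond among the surviving edges, since the bond would lift; conversely, collapsing cannot destroy one unless the collapse leaves $\FreeS\setminus\Conebonds$. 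The replacement for \cref{lem:bridges_are_a_forest} is that $\twosepedges(\sigma)$ is never all of $\sigma$ unless $\Gamma(\sigma)$ is a cycle, which is impossible in degree $\ge 3$. We also want a bound: collapsing $\twosepedges(\sigma)$, or at least one edge from each chain, stays inside $\Conebonds\cup\partial\FreeS$, or else lands in a controlled set.

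Next we define $\Ctwobonds_{p,q}$ exactly as in \cref{def:Conebonds}, with $\twosepedges$ in place of $\onesepedges$ and $\Conebonds$ in place of $\partial\FreeS$. We then check the analogue of \cref{lem:filtration_of_Conebonds} to get a chain of inclusions $\Ctwobonds_{p,q-1}\hookrightarrow\Ctwobonds_{p,q}$ from $\Conebonds$ to $\Ctwobonds$. Each step is handled by \cref{thm:Quillen_fibre}.

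For $p\ge1$, the lower fibre should retract, via $\rho\mapsto\rho\cup\twosepedges(\sigma)$, onto the poset of proper subsets of $\sigma$ containing $\twosepedges(\sigma)$. This poset has the minimum $\twosepedges(\sigma)$ provided $\twosepedges(\sigma)\in\Conebonds\cup\partial\FreeS$. That condition holds as soon as some chain has at least two edges and collapsing the complement produces a graph with a cut vertex or a nontrivial vertex group. This is where the argument differs from the bridge case. When collapsing everything except the 2-separating edges gives a cycle-like graph in $\FreeS\setminus\Conebonds$, the minimal element fails, and the lower fibre becomes a boundary of a simplex-like poset. The $\theta$-graphs appear here: for a $\theta$-graph, no edge is in a 2-bond at all. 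So the $p=0$ step, adding graphs that are 3-edge connected but have 2-vertex cuts, is the crucial one.

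For $p=0$, we use upper intervals as in \cref{lem:upper_fibre_one_bonds}. $(\Ctwobonds_{0,q})_{\supset\rho}$ retracts onto the full subcomplex of blow-ups whose new edges are all 2-separating. By the join decomposition of $\lk_{\FreeS}(\rho)$ this is $\ast_v\mcP_v$, where $\mcP_v$ is the subcomplex of partitions of $E(v)$ compatible with the 2-vertex cuts. If some vertex of $\Gamma(\rho)$ not involved in a 2-vertex cut in an essential way gives a cone point, in the spirit of \cite[Lemma 2.1]{Conant2005}, the fibre is contractible.

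The main obstacle is the $\theta$-graph. Both vertices have the same $n+1$ half-edge labels, and blowing up to create a 2-bond requires compatible splittings at both ends that pair up the same subset of edges. I would identify $\mcP_{v_1}\ast\mcP_{v_2}$ (or the joined fibre) with a complex built from ordered/compatible partitions of $\{1,\dots,n+1\}$. I would show it is homotopy equivalent to a wedge of $(n-1)!$ spheres of dimension $2n-3$, for instance via a further Quillen-fibre or discrete Morse argument reducing to the partition lattice $\Pi_{n+1}$-type complexes, whose top homology rank $n!$ adjusts to $(n-1)!$. Since the fibre is $(2n-4)$-connected, \cref{thm:Quillen_fibre} gives that this step is $(2n-3)$-connected and all others are homotopy equivalences, proving \cref{thm:C'_he_C}.
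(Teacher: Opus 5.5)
There is a genuine gap, and it sits exactly where the difficulty of the theorem lies.

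\textbf{The set $\twosepedges$ does not behave as you claim.} With your definition (edges lying in an honest 2-bond of $\Gamma(\sigma)$), the identity $\twosepedges(\rho)=\twosepedges(\sigma)\cap\rho$ fails. Take a $\theta$-graph $\rho$ with edge set $A\sqcup B$, where $|A|,|B|\geq 2$ (so $n\geq 3$). Blow up both vertices by spheres $S,T$, each inducing the split $A\,|\,B$. Then $\ls e_S,e_T\rs$ is a 2-bond of $\Gamma(\rho\cup\ls S,T\rs)$. But $\Gamma(\rho\cup\ls S\rs)$ lies outside $\Conebonds$ and is 3-edge connected, since every vertex set has edge boundary of size at least $3$. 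So for honest 2-bonds only $\twosepedges(\rho)\subseteq\twosepedges(\sigma)\cap\rho$ holds.

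This has two consequences. First, your ``subcomplex of blow-ups whose new edges are all 2-separating'' is not downward closed: $\rho\cup\ls S,T\rs$ lies in it, but its face $\rho\cup\ls S\rs$ is not even in the upper interval. So the join decomposition of $\lk_{\FreeS}(\rho)$ has nothing to act on. Second, graphs like $\Gamma(\rho\cup\ls S\rs)$ now enter at $p=0$, where the lower fibre is a full simplex boundary. You would therefore need contractible upper intervals for them as well, and you give no argument for this.

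The paper avoids this by putting into $\twosepedges$ the edges that lie in a 2-bond of \emph{some expansion}. For this notion only $\twosepedges(\sigma)\cap\rho\subseteq\twosepedges(\rho)$ holds; \cref{fig:new_2_sep_edge} shows the converse fails. The real input is then \cref{lem:adding_in_2_sep} and its consequences \cref{lem:collapsing_2_cut_edges,lem:2_sep_collapsed_reduced_size}: collapsing only edges of $\twosepedges$ stays \emph{outside} $\Conebonds$ and strictly shrinks $\twosepedges$. This is the opposite of the ``bound'' you ask for, which would put such collapses inside $\Conebonds$.

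This fact is what makes the lower-fibre retraction well defined. It also yields the forest property, so for $p\geq 1$ the minimum $\twosepedges(\tau)$ always exists and no $\theta$-phenomenon occurs there; your speculation about lower fibres becoming simplex boundaries at $p\geq 1$ is unfounded. The same fact gives flagness of $\fibreonlytwo$. Finally, identifying the upper interval with $\fibreonlytwo$ needs the relative filtration of \cref{lem:upper_fibre_simeq_fibreonlytwo}, not a one-step retraction, because $\twosepedges$ is not monotone under expansion.

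\textbf{The $p=0$ upper interval is not a join.} Even with the correct definition, $\fibreonlytwo$ is flag but not full in $\lk_{\FreeS}(\rho)$, so it is not $\ast_v\twosepposet_v$. Every vertex of $\twosepposet_v$ lies in some $\twosepposet_{vw}$ for a 2-vertex cut $\ls v,w\rs$. When $\twosepedges(\rho)=\emptyset$ there is a canonical isomorphism $\phi_{vw}\colon\twosepposet_{vw}\to\twosepposet_{wv}$. Elements $S\in\twosepposet_{vw}$ and $T\in\twosepposet_{wv}$ span an edge if and only if $\phi_{vw}(S)$ and $T$ are compatible (\cref{lem:helper2_2}).

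For non-$\theta$ graphs, the cone point comes from a 2-vertex cut $\ls v,w\rs$ having a $vw$-component with at least two edges. A vertex in no 2-vertex cut has $\twosepposet_v=\emptyset$ and contributes nothing, so your proposed source of cone points does not work. Checking that this element is a cone point of all of $\fibreonlytwo$ requires \cref{lem:helper1,lem:helper2_1,lem:helper2_2}.

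For the $\theta$-graph, $\twosepposet_v\ast\twosepposet_w$ would be a wedge of $((n-1)!)^2$ spheres of dimension $2n-5$, which is the wrong answer. Instead, the symmetrising monotone map
\[
\sigma_v\cup\sigma_w\mapsto(\sigma_v\cup\phi_{wv}(\sigma_w))\cup(\sigma_w\cup\phi_{vw}(\sigma_v))
\]
retracts $\fibreonlytwo$ onto the diagonal copy of $\twosepposet_{vw}$. This is Vogtmann's complex of two-part splits of $n+1$ letters, a wedge of $(n-1)!$ spheres of dimension $n-3$. The dimension $2n-3$ only appears after joining with the lower fibre $\partial\Delta^n\simeq S^{n-1}$, which your proposal never uses.
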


In fact, we will show a slightly stronger statement, which also gives an explicit description of the homotopy type of the fibres of the inclusion $\Conebonds\hookrightarrow \Ctwobonds$, see \cref{prop:inclusion_fibres_highly_connected}.

The idea of the argument is similar to the one we used for the inclusion $\partial \FreeS \hookrightarrow \Conebonds$ in the previous section, but we are faced with additional difficulties here.
We start with an overview before we get into the details. 

Again, we use a filtration to subdivide the inclusion $\Conebonds\hookrightarrow \Ctwobonds$ into several steps.
What we would like to have is that all the inclusions arising in this filtration are homotopy equivalences. This will not be true, but we will show that \emph{all but one} of the inclusions are homotopy equivalences and that one inclusion is $(2n-3)$-connected and has explicitly described fibres.

We build the filtration such that if $X\hookrightarrow Y$ is one of the intermediate inclusions, then $Y$ is obtained by adding all sphere systems $\sigma$ such that $\Gamma(\sigma)$ is isomorphic to one of a fixed finite set of graphs that only depends on the filtration level. Then for showing that $f: X\hookrightarrow Y$ is a homotopy equivalence (or highly connected), we will show that for all $\sigma\in Y\setminus X$, the join of $f^{-1}(Y_{\subseteq \sigma}) = X_{\subset \sigma}$ and $Y_{\supset \sigma} = X_{\supset \sigma}$ is contractible (or highly connected).

To get such a filtration, we hence need to order the graphs that arise in $\Ctwobonds\setminus \Conebonds$ appropriately.
Our intuition here is that we order the graphs by how connected they are, starting with the least connected ones.
These are the graphs in $\Conebonds$, which all lie in the boundary or have a cut vertex, so are not 2-vertex connected (see \cref{lem:Conebond_via_cut_vertices}). 
The graphs in $\Ctwobonds\setminus \Conebonds$ are 2-vertex connected, but they all contain a 2-vertex cut, so they are not 3-vertex connected (see \cref{rem:2_sepators_in_C}).
Intuitively speaking, we now order these graphs by how 3-edge connected they are, starting with graphs that have many 2-bonds and then successively adding the ones with fewer and fewer 2-bonds.

For the fibres, this means that $X_{\subset \sigma}$ should be the poset of all $\rho\subset \sigma$ such that $\Gamma(\rho)$ does not have less 2-bonds than $\Gamma(\sigma)$. 
This suggests that $X_{\subset \sigma}$ should be (at least up to homotopy) the poset of all ways of collapsing subgraphs of $\Gamma(\sigma)$ that do not contain edges in 2-bonds.
Similarly, $X_{\supset \sigma}$ should be (at least up to homotopy) the poset of all ways of adding 2-bonds in $\Gamma(\sigma)$.

What makes the situation here trickier than its 1-bond analogue is that distinct 2-bonds can share edges. This makes it hard to keep track of the number of 2-bonds.
We work around this by not actually counting the number of 2-bonds in a graph $G$, but instead the number of edges that \emph{can be contained} in a 2-bond in some extension of $G$.

\section{Graphs with 2-bonds and 2-separators}
\label{sec:graphs_with_2_bonds}
Before we define the filtration of $\Ctwobonds$ and prove the connectivity of the inclusions, we need to get a better understanding of the structure of graphs with 2-bonds.

We say that an edge $e\in E(G)$ is \emph{contained in a 2-separator} if there is $G\leq G'\in \FreeS$ such that $e$ is contained in a 2-bond in $G'$.\footnote{In fact, the proof of \cref{lem:equivalent_characterisations_of_E2sep} shows that if such a $G'$ exists, it can be chosen such that $G'$ contains at most one edge more than $G$.} 
We write $\twosepedges(G)\subseteq E(G)$ for the set of edges of $G$ that are contained in a 2-separator.
In other words, for $e\in E(G)$, we have
\begin{gather*}
	e\text{ is contained in a 2-bond in }G \implies e\in \twosepedges(G)\\
	\text{and}\\
	e\in \twosepedges(G) \implies e\text{ is contained in a 2-bond in some } G'\geq G.
\end{gather*}
For $\sigma\in \FreeS$ a sphere system, we let $\twosepedges(\sigma)\subset \sigma$ be the subset corresponding to the edges that lie in a 2-separator in the dual graph,
\begin{equation*}
	\twosepedges(\sigma) = \ls S\in \sigma \mid e_S \in \twosepedges(\Gamma(\sigma)) \rs.
\end{equation*}

The aim of the present section is to provide 2-bond analogs of the results about graphs with 1-bonds in \cref{sec:graphs_with_1_bonds}. The proof ideas are similar, but the technical details get more involved here.
The graph-theoretic input needed later consists mainly of three facts.

\begin{enumerate}
\item \cref{lem:equivalent_characterisations_of_E2sep} is an analogue of \cref{lem:Conebond_via_cut_vertices} and gives a usable characterisation of edges in $\twosepedges$ in terms of vertex--edge cuts.
\item \cref{lem:adding_in_2_sep} is an analogue of \cref{lem:collapsing_not_to_partial_preserves_bridges} and controls how $\twosepedges$ changes when one expands edges in a graph.
\item \cref{lem:collapsing_2_cut_edges,lem:2_sep_collapsed_reduced_size} are related to \cref{lem:bridges_are_a_forest} and show that collapsing only edges in $\twosepedges$ preserves 2-vertex connectivity and properly decreases the size of $\twosepedges$.
\end{enumerate}

Similar graph theoretic results also appear in the work of Bregman--Fullarton, in particular in Section 3.3 \cite{Bregman2018}.

\subsection{Characterising edges in 2-separators}
\label{sec:structure_of_2_bonds}

\cref{lem:Conebond_via_cut_vertices} provided a description of all graphs that are obtained by collapsing edges in a graph with a 1-bond: These are the graphs that lie in $\partial \FreeS$ or contain a cut vertex.
It is almost true that $\sigma\in \Ctwobonds$ if and only if $\sigma \in \Conebonds$ or $\Gamma(\sigma)$ contains a 2-vertex cut. The reason is that collapsing both edges in a 2-bond leads to a 2-vertex cut (see \cref{rem:2_sepators_in_C}).
However, we also need to understand what happens if we just collapse one edge of a 2-bond.
This is described in \cref{lem:equivalent_characterisations_of_E2sep}. Before we state it, we prove two auxiliary lemmas.

\begin{lemma}
	\label{lem:two_bonds_two_comps}
	Let $G\in \FreeS\setminus C$. If $\ls e,f\rs \subset E(G)$ is a 2-bond, then $e$ and $f$ are disjoint, $G-\ls e,f\rs$ has exactly two connected components and each of them contains exactly one endpoint of $e$ and one of $f$.
\end{lemma}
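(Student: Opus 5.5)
We argue directly from the hypotheses. Since $G\in \FreeS\setminus \Conebonds$, we know $G\notin\partial\FreeS$, so $G$ is connected of rank $n$ with all vertices of degree at least three. By \cref{lem:Conebond_via_cut_vertices}, $G$ has no cut vertex, and since $C$ is downward closed and contains every graph with a separating edge, $G$ has no bridge either. Let $\{e,f\}$ be a 2-bond, so $G-\{e,f\}$ is disconnected.

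We first bound the number of components. Removing a single edge from a connected graph raises the number of components by at most one. Hence $G-e$ has at most two components and $G-\{e,f\}$ has at most three. Since $e$ is not a bridge, $G-e$ is connected. So $G-\{e,f\}$ has at most two components, and being disconnected it has exactly two, say $A$ and $B$. In $G-e$, the edge $f$ must then be a bridge joining $A$ and $B$, so $f$ has one endpoint in $A$ and one in $B$. Symmetrically, $G-f$ is connected and $e$ is a bridge of $G-f$, so $e$ also has one endpoint in each of $A$ and $B$. This rules out either edge being a loop, and shows that each component contains exactly one endpoint of $e$ and exactly one endpoint of $f$.

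It remains to show that $e$ and $f$ are disjoint, meaning they share no endpoint; they are already distinct edges since the bond has size two. Suppose they share a vertex $v$, and say $v\in A$. Then the only connections between $A$ and $B$ in $G$ are $e$ and $f$, and both leave $A$ through $v$. Consequently $|G|-v$ separates $A\setminus\{v\}$ from $B$, provided $A\setminus\{v\}$ is non-empty. So it suffices to show $A\neq\{v\}$. If $A=\{v\}$, then all half-edges at $v$ other than those of $e$ and $f$ are loops at $v$, since $A$ is a component of $G-\{e,f\}$. Because $\deg v\geq 3$, there is at least one loop at $v$. But then $|G|-v$ separates the interior of that loop from $B$, so $v$ is again a cut vertex. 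Either way $v$ is a cut vertex, contradicting \cref{lem:Conebond_via_cut_vertices}. Hence $e$ and $f$ are disjoint.

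The only step needing care is the case $A=\{v\}$, which degree at least three together with the treatment of loops as cut-vertex witnesses handles.
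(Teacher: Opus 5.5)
Your proof is correct, but it is organised differently from the paper's.

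\textbf{The paper's route.} The paper lets $V$ be the set of endpoints of $e$ and $f$ and bounds what each component $K$ of $G-\{e,f\}$ can contain:
\begin{itemize}
\item connectedness gives $|K\cap V|\geq 1$;
\item the absence of cut vertices gives $|K\cap V|\geq 2$;
\item the absence of bridges means $K$ cannot contain both endpoints of $e$, or both endpoints of $f$.
\end{itemize}
Since $|V|\leq 4$ and there are at least two disjoint components, a single count gives all three conclusions at once: exactly two components, $|V|=4$ (hence disjointness), and one endpoint of each edge in each component.

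\textbf{Your route.} You get the number of components, and the fact that both edges cross between $A$ and $B$, from the observation that deleting an edge raises the number of components by at most one. That part uses only that $G$ is connected and bridgeless, so it holds for any bridgeless connected graph. You then prove disjointness separately with a cut-vertex argument.

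\textbf{Comparison.} The paper's argument is more compact, since everything falls out of one pigeonhole count. Yours makes visible which hypothesis drives which conclusion. It also treats explicitly the degenerate case where a component consists only of the shared vertex $v$, which is where the degree condition enters. The paper's step ``2-vertex connectivity implies $|K\cap V|\geq 2$'' uses that case only implicitly.

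\textbf{A small wording point.} Your case split is really about whether $|A|-v$ is empty. If $A$ has vertex set $\{v\}$ but carries a loop, then $|A|-v$ is already non-empty and your first case applies. If $A$ is the bare vertex, then $\deg v=2$, contradicting stability. The logic is sound either way.
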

\begin{proof}
	Let $V$ be the set of endpoints of $e$ and $f$. 
	Let $K$ be a connected component of $G-\ls e,f\rs$. 
	By \cref{lem:Conebond_via_cut_vertices}, $G$ is 2-vertex connected, so in particular also 2-edge connected.
	1-connectivity implies that $|K\cap V| \geq 1$; 2-vertex connectivity implies that $|K\cap V| \geq 2$; 2-edge connectivity implies that $K$ cannot contain both endpoints of $e$ or both endpoints of $f$.
	As $G-\ls e,f\rs$ is disconnected and distinct connected components are disjoint, this implies the claim.
\end{proof}

\begin{lemma}
\label{lem:every_2_bond_cyclic}
Let $G\in \FreeS\setminus C$ and $\ls e,f\rs\subset E(G)$ a 2-bond. Then each component of $G-\ls e,f\rs$ has non-trivial fundamental group.
\end{lemma}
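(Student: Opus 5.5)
By \cref{lem:two_bonds_two_comps}, the graph $G-\ls e,f\rs$ has exactly two components $K_1,K_2$. Each $K_i$ contains exactly one endpoint of $e$ and one endpoint of $f$, and these four endpoints are pairwise distinct. I will argue by contradiction: suppose some component, say $K_1$, is a tree, and let $u$ and $w$ be the endpoints of $e$ and $f$ lying in $K_1$. The plan is to play the degree condition (every vertex of $G$ has valence at least three, since $G\in\FreeS\setminus\partial\FreeS$) against 2-vertex connectivity of $G$, which holds by \cref{lem:Conebond_via_cut_vertices} because $G\notin\Conebonds$.

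First I show that $K_1$ has no vertex other than $u$ and $w$. Suppose $K_1$ is a tree with a vertex $x\notin\ls u,w\rs$. Such an $x$ is incident only to edges of $K_1$, so it has the same degree in $K_1$ as in $G$, hence degree at least three in $K_1$. A vertex of degree at least three in a tree is a cut vertex of that tree, and removing it leaves at least three branches. Since there are only the two special vertices $u,w$, at least one branch $B$ contains neither of them. Then $B$ is attached to the rest of $G$ only through $x$: all edges leaving $B$ either stay in $K_1$ (and hence go through $x$) or are $e$ or $f$, which are not incident to $B$. So $x$ is a cut vertex of $G$, contradicting 2-vertex connectivity.

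Hence $K_1$ is a tree with vertex set $\ls u,w\rs$, i.e.\ a single edge $g$ joining $u$ and $w$. Then $u$ is incident only to $e$ and $g$, so it has degree $2<3$, which contradicts the degree condition. This finishes the argument, and the same reasoning applies to $K_2$.

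The main point requiring care is the branch-counting step: one must check that, because $e$ and $f$ are incident to $K_1$ only at $u$ and $w$, the branch $B$ really is separated from the rest of $G$ by $x$ alone. The degree bookkeeping for $x$ relies on the fact that $x$ is not an endpoint of $e$ or $f$.
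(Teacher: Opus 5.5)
Your proof is correct, and its outline matches the paper's. Both assume a component $K_1$ is a tree, show it must be a single edge joining the endpoints $u$ and $w$ of $e$ and $f$, and then obtain a vertex of valence two.

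The difference is in how further vertices of $K_1$ are ruled out. You use 2-vertex connectivity: an extra vertex $x$ has degree at least three in the tree, so some branch at $x$ avoids $u$ and $w$, which makes $x$ a cut vertex of $G$. The paper uses only the valence condition. In a tree with at least two vertices, the leaves can only be $u$ or $w$, and a tree in which every vertex except two has degree at least three is forced to be a single edge (count leaves, or use $\sum \deg = 2(|V|-1)$).

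So the paper's step needs nothing beyond the degree bound and the endpoint structure from \cref{lem:two_bonds_two_comps}. Yours uses the no-cut-vertex hypothesis a second time, but it makes the structural reason explicit. The degree count can even finish the argument at once: $u$ and $w$ have degree at least two in $K_1$ and every other vertex has degree at least three, so $K_1$ has no leaf and cannot be a tree on at least two vertices.
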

\begin{proof}
	By \cref{lem:two_bonds_two_comps}, $G-\ls e,f\rs$ has two connected components. 
	Assume that such a component $K$ has trivial fundamental group.
	By \cref{lem:two_bonds_two_comps}, $K$ contains exactly one endpoint of $e$ and one endpoint of $f$ and these are distinct. Every other vertex of $K$ must have valence at least three. This implies that $K$ can only be a tree if it consists of a single edge. But this cannot be the case as again by \cref{lem:two_bonds_two_comps}, the endpoints of $e$ and $f$ are disjoint, i.e.~if $K$ would be a single edge, there would be vertices of valence two.
\end{proof}

The following lemma gives a more concrete description of $\twosepedges(\sigma)$ that we will work with in the following.

\begin{lemma}
	\label{lem:equivalent_characterisations_of_E2sep}
	Let $G\in \FreeS\setminus C$ and $e\in E(G)$. Then $e\in \twosepedges(G)$ if and only if there is $v\in V(G)$ such that $|G|- \ls v,e\rs$ is disconnected.

	If this is the case, then one can choose $v$ such that $|G|- \ls v,e\rs$ has exactly two connected components, each containing an endpoint of $e$ as a proper subset.
\end{lemma}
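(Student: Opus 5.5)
We prove both directions by relating 2-bonds in an expansion $G'\geq G$ to vertex--edge cuts in $G$, in analogy with the proof of \cref{lem:Conebond_via_cut_vertices}. Throughout, $G\in\FreeS\setminus\Conebonds$. So $G$ is not in $\partial\FreeS$ and, by \cref{lem:Conebond_via_cut_vertices}, it is 2-vertex connected. By \cref{lem:rk_preserved_2_connected}, every $G'\geq G$ lies in $\FreeS\setminus\Conebonds$ as well, so \cref{lem:two_bonds_two_comps,lem:every_2_bond_cyclic} apply to $G'$.

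\emph{($\Rightarrow$)} Let $e\in\twosepedges(G)$, and choose $G'\geq G$ with a 2-bond $\{e,f\}$. Note that $e$ is not collapsed, since it is an edge of $G$. By \cref{lem:two_bonds_two_comps}, $G'-\{e,f\}$ has two components $K_1,K_2$, each containing one endpoint of $e$ and one of $f$. By \cref{lem:every_2_bond_cyclic}, each $K_i$ has non-trivial fundamental group. Since the collapse $G'\to G$ preserves the rank and only collapses forests, the images of $K_1$ and $K_2$ in $G$ are not single points and each retains a cycle.

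Now define the vertex $v$:
\begin{itemize}
\item If $f$ is collapsed in $G$, let $v$ be the image of $f$.
\item If $f$ is not collapsed, let $v$ be the image of an endpoint of $f$. In this case $f$ becomes an edge at $v$ when we pass to $|G|-\{v,e\}$.
\end{itemize}
Then $|G|-\{v,e\}$ splits into the image of $K_1$ minus $v$ and the image of $K_2$ minus $v$ (together with the open edge $f$ in the second case). The point to check is that these pieces are nonempty and not joined. They are not joined because the only connections between $K_1$ and $K_2$ in $G'$ were $e$ and $f$. They are nonempty because each image carries non-trivial $\pi_1$, hence contains points other than $v$.

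\emph{($\Leftarrow$) and the refinement.} Suppose $|G|-\{v,e\}$ is disconnected. First, $v$ is not an endpoint of $e$: otherwise $v$ would be a cut vertex, since removing the open edge $e$ after $v$ changes nothing. Next, by 2-vertex connectivity, $|G|-v$ is connected. Hence $e$ is a bridge of $|G|-v$, so $|G|-\{v,e\}$ has exactly two components $A$ and $B$, each containing one endpoint of $e$. This gives the second statement, except for the claim that each endpoint is a proper subset of its component. We handle that below.

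To realise a 2-bond, blow up $v$. Partition the half-edges at $v$ into $H_A$, those leading into $A$, and $H_B$, those leading into $B$. Loops at $v$ lie in neither component, so put them into either side. Split $v$ into two vertices $v_A$ and $v_B$ joined by a new edge $f$, with $v_A$ carrying $H_A$ and $v_B$ carrying $H_B$. In the resulting $G'$, the set $\{e,f\}$ is a 2-bond, and $G'$ collapses to $G$ by contracting $f$. It remains to check that $G'$ is stable, i.e.\ $v_A$ and $v_B$ have valence at least three. This amounts to showing $|H_A|,|H_B|\geq2$. If $|H_A|=1$, the single edge from $v$ into $A$ together with $e$ would form a 2-bond of $G$ in which both edges share the vertex $v$, contradicting \cref{lem:two_bonds_two_comps}. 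If $|H_A|=0$, then $e$ alone would be a bridge.

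\emph{Main obstacle.} The main difficulty is the degenerate situation where a component is only the endpoint of $e$ itself, for example $A=\{x\}$. In that case $x$ has valence at least three, with all its other edges going to $v$, so $e$ together with a multi-edge from $x$ to $v$ is present. Here we would try to choose a different $v$ instead: replace $v$ by $x$ and check that this yields a proper cut, or else show the configuration contradicts 2-vertex connectivity. The same cases, including loops at $v$, also need care in the valence count for the blow-up.
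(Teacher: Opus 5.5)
You have a genuine gap in the converse direction, in the case $|H_A|=1$. You argue that the unique edge $e_1$ from $v$ into $A$ and $e$ form a 2-bond in which both edges share $v$, contradicting \cref{lem:two_bonds_two_comps}. But you have just shown that $v$ is \emph{not} an endpoint of $e$. So $e$ and $e_1$ need not share any vertex, and there is no contradiction.

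This case really occurs. Take the 4-cycle $x,z,v,y$ with the edges $xz$ and $vy$ doubled, and set $e=xy$, $e_1=zv$. This graph is trivalent, of rank $3$ and 2-vertex connected. The component of $|G|-\{v,e\}$ containing $x$ meets $v$ only through $e_1$, so your blow-up would create a bivalent vertex $v_A$ and $G'\notin\FreeS$.

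The missing step, which the paper supplies, is that no blow-up is needed here. $A$ with the open edge $e_1$ removed is cut off from $B\cup\{v\}$. Hence $\{e,e_1\}$ is already a 2-bond of $G$, and $e\in\twosepedges(G)$ with $G'=G$.

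Apart from this, your forward direction is correct. It treats an arbitrary expansion $G'\geq G$ in one step, whereas the paper first reduces to a 2-bond in $G$ or to $G=G'/f$. Your observation that $e$ is a bridge of the connected space $|G|-v$ cleanly gives exactly two components, each containing an endpoint of $e$, for every admissible $v$.

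The refinement ``as a proper subset'' you postpone and then leave open. The repair you sketch cannot work: $x$ is an endpoint of $e$, so $|G|-\{x,e\}=|G|-x$ is connected.

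In fact the degenerate case $A=\{x\}$ never arises. Components are subspaces of $|G|$, and $x$ has valence at least three and no loops (as $G\notin\Conebonds$). So $x$ has at least two further edges. Their open interiors lie in $|G|-\{v,e\}$ because $v\neq x$, and they are attached to $x$, hence lie in $A$. Thus $A\supsetneq\{x\}$; your own remark that $x$ has further edges going to $v$ already shows this.

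This topological properness is what the paper's proof provides and what is used later, e.g.\ to see that $|G|-\{\overbar{e},v\}$ stays disconnected. A vertex-count version would be false: in the example above, both admissible choices $v$ and $z$ leave a single vertex on one side.
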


\begin{proof}
	Assume that $e\in \twosepedges(G)$.
	If $e$ is contained in a 2-bond $\ls e,f\rs$ in $G$, then by \cref{lem:two_bonds_two_comps}, $G- \ls e,f\rs$ has exactly two connected components $K_1$ and $K_2$, each containing an endpoint of $e$ and that endpoint is a proper subset by \cref{lem:every_2_bond_cyclic}. 
	Let $v$ be the endpoint of $f$ contained in $K_1$. Then $|G|- \ls v, e\rs$ also has two connected components: One is equal to $K_1\setminus \ls v \rs$; this is connected because the assumptions imply that if $v$ was a cut vertex in $K_1$, then it would be a cut vertex in $G$, contradicting the assumption that $G\in \FreeS\setminus C$. The other connected component is obtained from $K_2$ by adding the open edge $f$. Hence, the claim follows.
	
	Now assume that $e$ is not contained in a 2-bond in $G$.
	By definition, there is $G\leq G''\in \FreeS$ such that $e$ is contained in a 2-bond $\ls e,f\rs$ in $G''$. Let $G\leq G'\leq G''$ be obtained by collapsing all edges in $E(G'')\setminus E(G)$ other than $e,f$. It is easy to see that $\ls e,f\rs$ is still a 2-bond in $G'$.
	Furthermore, we have $G'\in \FreeS\setminus \Conebonds$ by \cref{lem:rk_preserved_2_connected} as $G\leq G'$.
	So $\ls e,f\rs$ is a 2-bond in $G'$ and $G$ is obtained from $G'$ by collapsing $f$ to a vertex $w$.
	By \cref{lem:two_bonds_two_comps} and \cref{lem:every_2_bond_cyclic}, $G'-\ls e,f\rs$ has exactly two connected components, each of which has non-trivial fundamental group. In particular, each connected component contains more than one vertex. But then also $|G'|- \ls e, \bar{f}\rs$ is disconnected, where $\bar{f}$ is the closure of $f$ in $|G'|$, i.e. the edge together with its endpoints.
	As $f$ gets collapsed to $w$ in $G$, we have that $|G'|- \ls e, \bar{f}\rs$ is homeomorphic to $|G|- \ls e, w\rs$, so setting $v = w$, the claim follows. 

	For the converse, assume that we have $v$ such that $|G|- \ls v,e\rs$ is disconnected. Note that this implies that $v$ is not an endpoint of $e$ because otherwise, it would be a cut vertex in $G$; as $G\in \FreeS\setminus \Conebonds$, \cref{lem:Conebond_via_cut_vertices} says that this is not possible.
	We claim that we can blow up $v$ to an edge $f$ such that $\ls e, f\rs$ is a 2-bond in the resulting graph $G'$. This implicitly uses the poset of partitions description of the link of a vertex in $\FreeS$ that we discussed in \cref{sec:link_sphere_system}.
	Let $v_1, v_2$ be the two endpoints of $e$. We can partition the set of edges of $G$ that are incident to $v$ into two sets $E_1$ and $E_2$ such that all edges in $E_1$ lie in the same connected component of $|G|- \ls v,e \rs$ as $v_1$ and all edges in $E_2$ lie in the same connected component of $|G|- \ls v,e \rs$ as $v_2$.
	Both $E_1$ and $E_2$ must have cardinality at least one because otherwise, the corresponding endpoint of $e$ would be a cut vertex in $G$.
	If both $E_1$ and $E_2$ have cardinality at least two, we blow up $v$ to an edge $f$ such that all edges in $E_1$ are incident to one endpoint of $f$ and all edges in $E_2$ are incident to the other endpoint of $f$. As $|E_1|, |E_2|\geq 2$, every vertex of the resulting graph $G'$ still has valence at least three, so $G'\in \FreeS$. Furthermore, the edges in $E_1$ lie in the same connected component of $|G'|- \ls e,f\rs$ as $v_1$ and the edges in $E_2$ lie in the same connected component of $|G'|- \ls e,f\rs$ as $v_2$. So $\ls e,f\rs$ is a 2-bond in $G'$.
	So assume that without loss of generality, $E_1= \ls e_1 \rs$ has cardinality one. 
	As $v_1$ has valence at least three, the component of $|G|- \ls v,e\rs$ that contains $v_1$ must contain at least one edge in addition to $e_1$. As $e_1$ is the only edge in this component that is adjacent to $v$, it follows that all of these edges lie in a connected component of $G- \ls e_1,e\rs$ that is different from the component containing $v$. Hence, $\ls e, e_1\rs$ is a 2-bond in $G$, see \cref{fig:equivalent_characterisations_of_E2sep}.
	\begin{figure}
		\centering
		\includegraphics{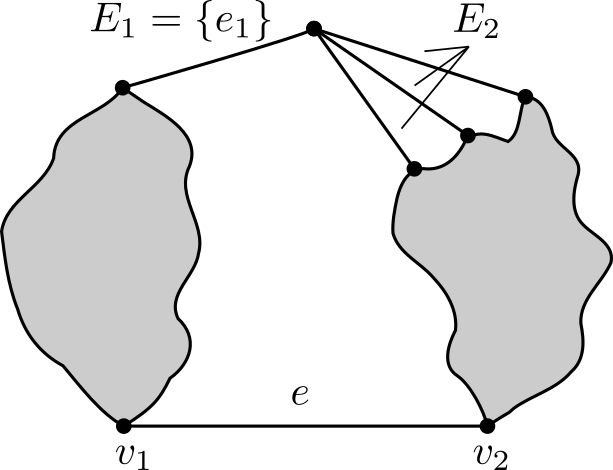}
		\caption{If $E_1$ is a singleton, then $\ls e, e_1\rs$ is a 2-bond in $G$.}
		\label{fig:equivalent_characterisations_of_E2sep}
	\end{figure}
\end{proof}

\begin{remark}
	\label{rem:2_sepators_in_C}
	Similar to the argument in \cref{lem:equivalent_characterisations_of_E2sep}, one can show that every graph in $\FreeS\setminus \Conebonds$ contains a 2-vertex cut.
	The converse is almost true: If $G\in \FreeS\setminus \Conebonds$ contains a 2-vertex cut, then $G\in \Ctwobonds$ except if $n=2$ and $G$ is the $\theta$-graph, i.e.~the graph with two vertices $v,w$ and three non-loop edges. The set $\ls v,w\rs$ is a 2-vertex cut as $|G|-\ls v,w \rs$ consists of three connected components, each given by one of the open edges. However, there is no 2-bond in $G$ and it also does not lie below a graph with a 2-bond because it is a maximal element in $\FreeS$.
	That this is the only exception will get clear in \cref{sec:partitions}, where we describe when one can introduce a 2-bond at a 2-vertex cut.
\end{remark}

\subsection{Behaviour under edge expansion and collapse}
Below, we will use the size of the set $\twosepedges(\sigma)$ as a measure of how far $\Gamma(\sigma)$ is from being 3-vertex connected, and we will use it to define a filtration of $\Ctwobonds$ (see \cref{sec:intuition}).
Hence, we need to understand how this set behaves when we add elements to or remove elements from $\sigma$ (i.e., when we collapse or expand edges in $\Gamma(\sigma)$).
In the setting of 1-bonds, we used \cref{lem:collapsing_not_to_partial_preserves_bridges}, which said that for $\rho, \sigma \in \FreeS\setminus \partial \FreeS$ with $\rho\subseteq \sigma$, we have $\onesepedges(\rho) = \onesepedges(\sigma) \cap \rho$. 
An analogue of one of the inclusions holds true for 2-bonds as well:

\begin{lemma}
	\label{lem:collapsing_not_to_C_preserves_2_cut_edges}
	Let $\rho, \sigma\in \FreeS\setminus C$ with $\rho \subseteq \sigma$.
	Then $\twosepedges(\sigma)\cap \rho \subseteq \twosepedges(\rho)$.
\end{lemma}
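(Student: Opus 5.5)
The plan is to argue directly from the definition of $\twosepedges$, which is phrased in terms of expansions. The key point is that the relation $\leq$ on dual graphs is transitive. Let $S\in \twosepedges(\sigma)\cap\rho$ and write $e = e_S$, viewed both as an edge of $G\coloneqq\Gamma(\sigma)$ and, since $S\in\rho$, as an edge of $H\coloneqq \Gamma(\rho)$. Here $H$ is obtained from $G$ by collapsing the edges corresponding to $\sigma\setminus\rho$, and this collapse does not collapse $e$.

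Since $S\in\twosepedges(\sigma)$, by definition there is $\tau\in\FreeS$ with $\sigma\subseteq\tau$ such that $e_S$ is contained in a 2-bond $\{e_S, e_T\}$ of $\Gamma(\tau)$. But then $\rho\subseteq\sigma\subseteq\tau$, so $\Gamma(\rho)\leq\Gamma(\tau)$. The edge of $\Gamma(\tau)$ corresponding to $S$ is exactly the edge that becomes $e_S$ in $\Gamma(\rho)$ after collapsing the edges of $\tau\setminus\rho$. Hence $e_S\in E(\Gamma(\rho))$ is contained in a 2-bond in some $\Gamma(\tau)\geq\Gamma(\rho)$, which is precisely the definition of $e_S\in\twosepedges(\Gamma(\rho))$. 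Therefore $S\in\twosepedges(\rho)$.

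The only point needing care is the bookkeeping identification of edges across the collapse maps. Edges are indexed by spheres, so $e_S$ in $\Gamma(\tau)$, $\Gamma(\sigma)$ and $\Gamma(\rho)$ all refer to the same sphere $S$, and the definition of "contained in a 2-separator" is invariant under this identification. The hypothesis $\rho,\sigma\notin \Conebonds$ is not actually needed for this argument.

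If one wants a proof via \cref{lem:equivalent_characterisations_of_E2sep} instead, proceed as follows. Pick $v$ with $|G|-\{v,e\}$ disconnected, and let $w$ be the image of $v$ in $H$. Then $|H|-\{w,e\}$ is a quotient of $|G|-\{\text{preimage of } w, e\}$. One would need to show that it stays disconnected, and this would use that $H\notin\Conebonds$: by \cref{lem:rk_preserved_2_connected}-type rank arguments, no component can be collapsed away entirely. This route is the harder one, and the main obstacle there is exactly ruling out that the collapse merges the two sides. The definitional argument above avoids this, so I would present that one.
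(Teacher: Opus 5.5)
Your definitional argument is correct and is essentially the paper's proof: take $\tau\supseteq\sigma$ in which $e_S$ lies in a 2-bond, note that $\tau\supseteq\rho$, and conclude $S\in\twosepedges(\rho)$. You are also right that the hypothesis $\rho,\sigma\notin\Conebonds$ is not used, so the sketched alternative via \cref{lem:equivalent_characterisations_of_E2sep} is unnecessary.
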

\begin{proof}
	This follows immediately from the definition: If $S\in \twosepedges(\sigma)$, then there is $\tau \supseteq \sigma$ where $S$ lies in a 2-bond. But then $\tau\supseteq\rho$, so if $S\in \rho$, we get $S\in \twosepedges(\rho)$.
\end{proof}

The other inclusion, however, does not hold in general for 2-bonds. It can happen that removing elements from $\sigma$ creates new elements in $\twosepedges$, so there can be $\rho\subset \sigma$ such that $|\twosepedges(\rho)|>|\twosepedges(\sigma)|$. An example is given in \cref{fig:new_2_sep_edge}.
\begin{figure}
\centering
\includegraphics{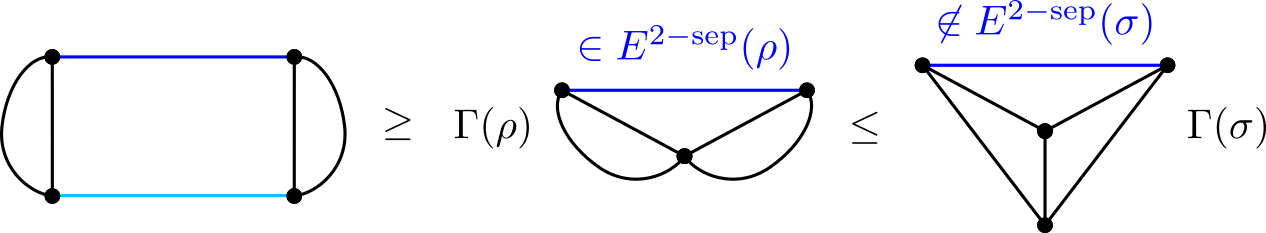}
\caption{The dual graphs of $\rho\subseteq \sigma$ with $\rho,\sigma \in \FreeS\setminus C$ (middle and right). The blue edge is contained in a 2-separator in $\Gamma(\rho)$ (as witnessed by the graph on the left) but not in $\Gamma(\sigma)$. In particular, $\twosepedges(\rho)\not\subseteq \twosepedges(\sigma)$.}
\label{fig:new_2_sep_edge}
\end{figure}
However, a weaker statement is still true and enough for our purposes: If one removes elements from $\sigma$ that are themselves contained in 2-separators, then one does not create new elements that are contained in 2-separators.
This is one of the consequences (see \cref{lem:2_sep_collapsed_reduced_size} in the next subsection) of the following lemma.

\begin{lemma}
	\label{lem:adding_in_2_sep}
	Let $\sigma\in \FreeS\setminus C$. Let $S\in \twosepedges(\sigma)$ and $T$ such that $\sigma\cup \ls T \rs \in \FreeS$ and such that $T\in \twosepedges(\rho)$ for some $\ls S \rs \subset \rho \subseteq \sigma\cup \ls T \rs$ with $\rho\in \FreeS \setminus \Conebonds$.
	Then $S \in \twosepedges(\sigma\cup \ls T \rs)$.
\end{lemma}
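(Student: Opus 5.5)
The plan is to work with the vertex–edge cut characterisation of \cref{lem:equivalent_characterisations_of_E2sep} throughout. Write $G=\Gamma(\sigma)$ and $G'=\Gamma(\sigma\cup\ls T\rs)$. Then $G'$ is obtained from $G$ by blowing up a vertex $w$ into the edge $e_T$ with endpoints $w_1,w_2$. This splits the half-edges at $w$ into two sets $W_1,W_2$, attached at $w_1$ and $w_2$ respectively. By \cref{lem:rk_preserved_2_connected}, $G'\in\FreeS\setminus\Conebonds$, so it suffices to find a vertex $v'$ of $G'$ such that $|G'|-\ls v',e_S\rs$ is disconnected. Since $S\in\twosepedges(\sigma)$, \cref{lem:equivalent_characterisations_of_E2sep} gives a vertex $v$ of $G$ such that $|G|-\ls v,e_S\rs$ has exactly two components $A$ and $B$, containing the two endpoints of $e_S$.

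\emph{Case $v\neq w$.} Here I take $v'=v$. The space $|G'|-\ls v,e_S\rs$ arises from $|G|-\ls v,e_S\rs$ by replacing the point $w$ with the closed edge $\overline{e_T}$. This operation cannot merge two components, so the space stays disconnected.

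\emph{Case $v=w$.} Each half-edge at $w$ leads into $A$ or into $B$; call these A-edges and B-edges. If one of $W_1,W_2$, say $W_2$, consists only of A-edges, I take $v'=w_1$. Every B-edge at $w$ then lies in $W_1$ and is attached to $w_1$, so after deleting $w_1$ and $e_S$ the set $B$ is still separated from $A\cup e_T\cup\ls w_2\rs$. The symmetric subcases are handled the same way. This leaves the \emph{mixed} case, in which each of $W_1$ and $W_2$ contains both an A-edge and a B-edge.

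\emph{The mixed case.} Here I use the hypothesis on $\rho$, and this is the step I expect to be the main obstacle; my aim is to show the mixed case cannot occur. Since $T\in\twosepedges(\rho)$ with $\rho\in\FreeS\setminus\Conebonds$, \cref{lem:equivalent_characterisations_of_E2sep} gives a vertex $u$ of $\Gamma(\rho)$ such that $|\Gamma(\rho)|-\ls u,e_T\rs$ is disconnected, with the two endpoints of $e_T$ in different components. As in that proof, $u$ is not an endpoint of $e_T$, since otherwise it would be a cut vertex.

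Next I pull this back to $G'$ along the collapse $G'\to\Gamma(\rho)$. The preimage of $u$ is a connected subtree $U$ of collapsed edges. It contains neither $w_1$ nor $w_2$, and it does not contain $e_S$, because $S\in\rho$ so $e_S$ is not collapsed. The space $|G'|-(U\cup e_T)$ then splits into a part $D_1\ni w_1$ and a part $D_2\ni w_2$; I need to check this preimage statement carefully.

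Now choose A-edges $a_1\in W_1$, $a_2\in W_2$ and B-edges $b_1\in W_1$, $b_2\in W_2$. The set $A$ is connected in $G$ without passing through $w$, so in $G'$ the edges $a_1\subset D_1$ and $a_2\subset D_2$ are joined by a path inside $A$ that avoids $\ls w_1,w_2\rs\cup e_T$. Such a path must meet $U$. The same argument with $b_1,b_2$ shows that $U$ meets $B$. But $U$ is connected and avoids $w_1$, $w_2$, $e_T$ and $e_S$. In $G'$, the sets $A$ and $B$ are joined only through $\overline{e_T}$ or through $e_S$, so $U$ cannot meet both of them. This contradiction rules out the mixed case and completes the proof. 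The delicate points are making this path argument rigorous and verifying that the preimage of the vertex–edge cut separates $w_1$ from $w_2$.
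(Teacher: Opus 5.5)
Your proof is correct and has the same case structure as the paper's: your mixed case is exactly the paper's situation in which neither $w_1$ nor $w_2$ separates the endpoints of $e_S$, and the final contradiction rests on the same two internally disjoint $w_1$--$w_2$ paths, one through $A$ and one through $B$. The only difference is that you pull the separating vertex of $\Gamma(\rho)$ back to a connected tree $U\subseteq A\sqcup B$ instead of pushing the two paths forward to $\Gamma(\rho)$, which is if anything cleaner because it avoids checking that the collapse keeps the paths internally disjoint; the points you flag as delicate are routine, since the preimage of a separation under the continuous collapse map is again a separation.
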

\begin{proof}
	Let $G = \Gamma(\sigma\cup \ls T \rs)$ and $H = \Gamma(\sigma)$. We have $H = G/f$, where $f\in E(G)$ is the edge corresponding to $T$. Let $v_f\in V(H)$ be the vertex that $f$ collapses to.
	Let $e\in \twosepedges(H)$ be the edge corresponding to $S$. We need to show that $e\in \twosepedges(G)$.

	Let $v_1, v_2$ and $w_1, w_2$ be the endpoints of $e$ and $f$, respectively.
	By \cref{lem:equivalent_characterisations_of_E2sep}, there is a vertex $v\in V(H)$ such that 
	$|H|-\ls v,e\rs$ has exactly two components $K_1, K_2$, one containing $v_1$ and the other containing $v_2$.
	If $v\neq v_f$, then it is easy to see that $v_1$ and $v_2$ also lie in distinct components of $|G|- \ls v, e\rs$, so $e\in \twosepedges(G)$ by the same \cref{lem:equivalent_characterisations_of_E2sep}.

	Now assume that $v= v_f$. 
	If $v_1$ and $v_2$ lie in distinct components of $|G|-\ls w_1, e\rs$ or $|G|-\ls w_2, e\rs$, then by \cref{lem:equivalent_characterisations_of_E2sep}, we get that $e\in \twosepedges(G)$. 
	So assume that this is not the case. Then there is a path $P_{\neg 1}$ from $v_1$ to $v_2$ in $|G|- \ls w_1, e \rs$. This path avoids $w_1$, so it cannot contain the open edge $f$, but it can contain its endpoint $w_2$. Write $P_{\neg 1}$ as a concatenation $P_{\neg 1}^1 w_2 P_{\neg 1}^2$, where both $P_{\neg 1}^1$ and $P_{\neg 1}^2$ do not contain $w_2$ and hence are paths in $|G|- \ls \overbar{f}, e \rs$.
	Since $v_f$ is the vertex that $f$ collapses to, there is a homeomorphism 
	\begin{equation*}
		K_1 \sqcup K_2 = |H|-\ls v_f,e\rs \cong |G|- \ls \overbar{f}, e \rs.
	\end{equation*}
	It follows that $P_{\neg 1}^1$ is contained in $K_1$ and $P_{\neg 1}^2$ is contained in $K_2$, see \cref{fig:3_15_replacement}.
	Similarly, there is a path $P_{\neg 2}$ from $v_2$ to $v_1$ in $|G|- \ls e, w_2 \rs$ that has an initial part $P_{\neg 2}^2 \subseteq K_2$ that ends at $w_1$ and a terminal part $P_{\neg 2}^1 \subseteq K_1$ that starts at $w_1$. 
	\begin{figure}
		\centering
		\includegraphics{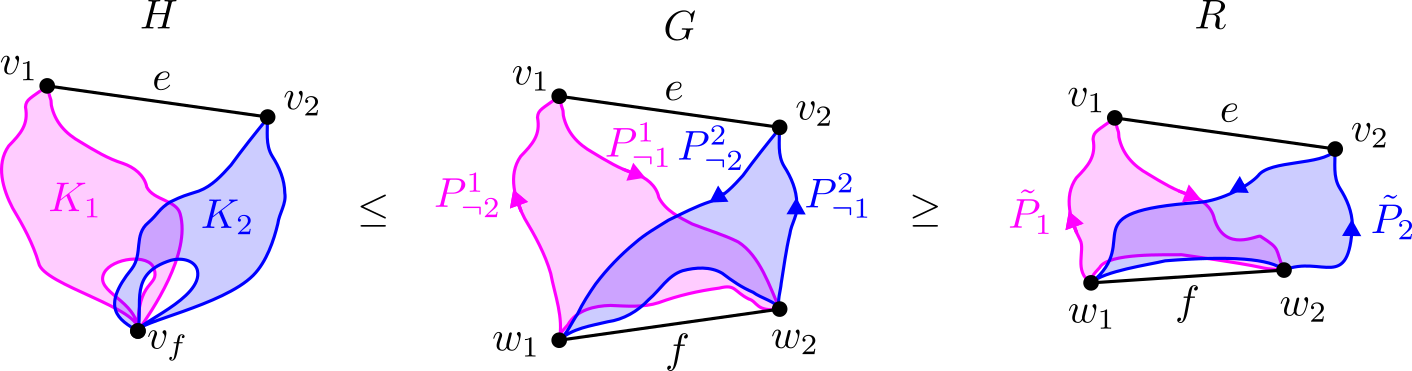}
		\caption{Paths and connected components in graphs of the proof of \cref{lem:adding_in_2_sep}.}
		\label{fig:3_15_replacement}
	\end{figure}
	The concatenations $P_1\coloneqq P_{\neg 2}^1  P_{\neg 1}^1$ and $P_2\coloneqq P_{\neg 1}^2 P_{\neg 2}^2$ give two paths in $G-\ls f\rs$ connecting $w_1$ to $w_2$. The interiors of $P_1$ and $P_2$ are disjoint because the former is contained in $K_1$ and the latter is contained in $K_2$.
	By assumption, there is $\rho \subseteq \sigma\cup \ls T\rs$ such that $S\in \rho\in \FreeS\setminus \Conebonds$ and $T\in \twosepedges(\rho)$.
	The graph $R$ corresponding to $\rho$ is obtained from $G$ by collapsing a set of edges not containing $e$ or $f$. It follows that $P_1$ and $P_2$ induce paths $\tilde{P_1}$ and $\tilde{P_2}$ in $R-\ls f\rs$ that connect the endpoints of $f$ and are disjoint in their interiors.
	This implies that for any vertex $u\in V(R)$ different from the endpoints of $f$, there is a path from $w_1$ to $w_2$ in $|R|-\ls u,f \rs$.
	By \cref{lem:equivalent_characterisations_of_E2sep}, this contradicts the assumption that $f\in \twosepedges(R)$.
\end{proof}

We will need the following consequence of \cref{lem:adding_in_2_sep}.

\begin{lemma}
	\label{lem:2_sep_pairwise}
	Let $\rho\subset \sigma \in \FreeS$. Assume that for all $S,T\in \sigma\setminus \rho$, we have $S,T\in \twosepedges(\rho\cup \ls S,T \rs)$. Then  $\sigma\setminus \rho \subseteq \twosepedges(\sigma)$.
\end{lemma}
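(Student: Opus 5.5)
We argue by induction along the chain of intermediate systems between $\rho$ and $\sigma$, using \cref{lem:adding_in_2_sep} to add one sphere at a time.

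\textbf{Standing assumption.} Throughout we assume $\rho\in\FreeS\setminus\Conebonds$. This is the setting in which $\twosepedges$ is analysed and in which \cref{lem:adding_in_2_sep} applies. The statement as printed does not include this hypothesis, and without it the argument below does not go through. Under this assumption, \cref{lem:rk_preserved_2_connected} gives $\tau\in\FreeS\setminus\Conebonds$ for every $\rho\subseteq\tau\subseteq\sigma$.

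\textbf{Inductive claim.} We prove, by induction on $k=|\tau\setminus\rho|$, that every $\tau$ with $\rho\subseteq\tau\subseteq\sigma$ satisfies $\tau\setminus\rho\subseteq\twosepedges(\tau)$. Taking $\tau=\sigma$ then gives the lemma.

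\textbf{Base cases.} The cases $k\le 2$ are exactly the hypothesis.
\begin{itemize}
\item If $k=1$, so $\tau=\rho\cup\{S\}$, apply the hypothesis with $T=S$.
\item If $k=2$, so $\tau=\rho\cup\{S,T\}$, the hypothesis says precisely that $S,T\in\twosepedges(\tau)$.
\end{itemize}

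\textbf{Inductive step.} Let $k\ge 3$ and fix $U\in\tau\setminus\rho$. We show $U\in\twosepedges(\tau)$.
\begin{itemize}
\item Since $k\ge 3$, we can choose $T\in\tau\setminus\rho$ with $T\neq U$, and set $\tau_0=\tau\setminus\{T\}$.
\item Then $\rho\subseteq\tau_0$, $|\tau_0\setminus\rho|=k-1$, and $U\in\tau_0\setminus\rho$. By induction, $U\in\twosepedges(\tau_0)$.
\item Apply \cref{lem:adding_in_2_sep} with $\sigma:=\tau_0$, $S:=U$ and the added sphere $T$. We have $\tau_0\in\FreeS\setminus\Conebonds$ and $\tau_0\cup\{T\}=\tau\in\FreeS$.
\item The remaining input is a witness system. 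Take $\rho':=\rho\cup\{U,T\}$. It contains $U$ properly, lies inside $\tau_0\cup\{T\}$, belongs to $\FreeS\setminus\Conebonds$ because it contains $\rho$, and satisfies $T\in\twosepedges(\rho')$ by the pairwise hypothesis.
\item \cref{lem:adding_in_2_sep} therefore yields $U\in\twosepedges(\tau)$.
\end{itemize}
Since $U$ was arbitrary, $\tau\setminus\rho\subseteq\twosepedges(\tau)$, which completes the induction.

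\textbf{Where the difficulty lies.} The argument is short, and the real work sits in \cref{lem:adding_in_2_sep}. The point needing care is the choice of the auxiliary sphere $T$. It must differ from $U$, so that removing it leaves $U$ in $\tau_0$ and the induction hypothesis applies to $U$. This is why we need $k\ge3$, with the cases $k\le2$ supplied directly by the hypothesis.
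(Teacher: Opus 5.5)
Your proof is correct and is the same induction as the paper's: to show $U\in\twosepedges(\tau)$ you remove a second sphere $T\neq U$, apply the induction hypothesis to $\tau\setminus\{T\}$, and use $\rho\cup\{U,T\}$ as the witness system in \cref{lem:adding_in_2_sep}. The differences are cosmetic. The paper gets the case $k=1$ from \cref{lem:collapsing_not_to_C_preserves_2_cut_edges} rather than by taking $S=T$. It also uses the hypothesis $\rho\in\FreeS\setminus\Conebonds$ only tacitly, although it needs it to invoke \cref{lem:adding_in_2_sep}, whereas you state it explicitly.
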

\begin{proof}
	We prove by induction that for all $\rho\subseteq\mu\subseteq\sigma$ such that $|\mu\setminus \rho| = i$, we have $\mu\setminus \rho \subseteq \twosepedges(\mu)$.
	For $i= 2$, this is true by assumption and for $i=1$, it follows from the assumption using \cref{lem:collapsing_not_to_C_preserves_2_cut_edges}. Now assume that the claim holds for some $i\geq 2$, and let $\rho\subseteq\mu\subseteq\sigma$ be such that $\mu\setminus \rho = \ls S_1,\ldots, S_{i+1} \rs$ has size $i+1$.

	Let $1\leq j \leq i+1$. To complete the induction step, we need to show that $S_j\in \twosepedges(\mu)$.
	Let $1\leq k\leq i+1$ with $k\neq j$.
	By the induction hypothesis, we have $S_j\in \twosepedges(\mu\setminus \ls S_k\rs)$ and we also have $S_k\in \twosepedges(\rho\cup \ls S_j, S_k \rs)$. Hence, we can apply \cref{lem:adding_in_2_sep} and get that $S_j\in \twosepedges(\mu)$, where we use that $\mu = \left(\mu\setminus \ls S_k \rs\right) \cup \ls S_k \rs$.
\end{proof}

\subsection{The 2-bonds form a forest}

The last lemma that we want to transport to the setting of 2-bonds is \cref{lem:bridges_are_a_forest}, which says that the separating edges of each graph in $\FreeS\setminus \partial \FreeS$ form a forest.
In the setting of 2-bonds, we have the following lemma, which is slightly stronger (see \cref{cor:2_sep_edges_form_a_forest}).

\begin{lemma}
	\label{lem:collapsing_2_cut_edges}
	Let $\sigma \in \FreeS\setminus C$ and $\rho\subseteq \sigma$ with $\sigma \setminus \rho\subseteq \twosepedges(\sigma)$. Then $\rho \in \FreeS\setminus C$.
\end{lemma}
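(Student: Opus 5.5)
The plan is to reduce to the case where $\sigma\setminus\rho$ is a single sphere, and then handle that case by a direct connectivity argument using \cref{lem:equivalent_characterisations_of_E2sep}.

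\textbf{Reduction to one sphere.} I will induct on $|\sigma\setminus\rho|$; the case $\rho=\sigma$ is trivial. Suppose the single-sphere case is known, and pick $S\in\sigma\setminus\rho$. Put $\rho'=\sigma\setminus\{S\}$. Since $S\in\twosepedges(\sigma)$, the single-sphere case gives $\rho'\in\FreeS\setminus\Conebonds$. Now \cref{lem:collapsing_not_to_C_preserves_2_cut_edges} applies to $\rho'\subseteq\sigma$ and yields
\begin{equation*}
\rho'\setminus\rho\subseteq\twosepedges(\sigma)\cap\rho'\subseteq\twosepedges(\rho').
\end{equation*}
The induction hypothesis for the pair $\rho\subseteq\rho'$ then gives $\rho\in\FreeS\setminus\Conebonds$.

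\textbf{One edge: not in the boundary.} Let $G=\Gamma(\sigma)$, let $e\in\twosepedges(G)$ have endpoints $v_1,v_2$, and set $H=G/e=\Gamma(\rho)$. By \cref{lem:Conebond_via_cut_vertices}, $G$ is 2-vertex connected, so $G$ has no loops (the endpoint of a loop is a cut vertex, as the rank is at least $2$). Hence $e$ is not a loop, and collapsing it preserves the fundamental group. Collapsing only increases valences, so $H$ still has degree at least three, and therefore $H\in\FreeS\setminus\partial\FreeS$. By \cref{lem:Conebond_via_cut_vertices} again, it remains to show that $H$ has no cut vertex.

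\textbf{One edge: no cut vertex.} First take a vertex $u$ of $H$ other than the image $w$ of $e$. Then $|H|-u$ is a quotient of $|G|-u$ in which the connected set $\overline{e}$ is collapsed. Disconnectedness would therefore pull back, and $u$ would be a cut vertex of $G$, which is impossible. So the only candidate is $w$, and $w$ is a cut vertex of $H$ if and only if $|G|-\overline{e}$ is disconnected.

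Suppose for contradiction that $|G|-\overline{e}$ is disconnected. Every component $C$ of $|G|-\overline{e}$ has closure meeting both $v_1$ and $v_2$. Indeed, if $C$ met only $v_1$, then $C$ would be separated from $v_2$ in $|G|-v_1$, making $v_1$ a cut vertex.

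Now use \cref{lem:equivalent_characterisations_of_E2sep}: there is a vertex $v$ such that $|G|-\{v,e\}$ has two components, one containing $v_1$ and one containing $v_2$. As noted in the proof of that lemma, $v\notin\{v_1,v_2\}$. Hence $v$ lies in at most one component of $|G|-\overline{e}$. Since there are at least two such components, some component $C'$ does not contain $v$. Then $C'\cup\{v_1,v_2\}$ is a path-connected set that avoids $v$ and $e$ and contains both $v_1$ and $v_2$. This contradicts the fact that $v_1$ and $v_2$ lie in different components of $|G|-\{v,e\}$.

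The main obstacle I expect is this last step: making precise that each component of $|G|-\overline{e}$ attaches to both endpoints of $e$, and that the vertex $v$ cannot obstruct all of these components simultaneously.
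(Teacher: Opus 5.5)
Your proposal is correct and follows the paper's proof. You use the same reduction to a single collapsed sphere via \cref{lem:collapsing_not_to_C_preserves_2_cut_edges}, the same observations that $e$ is not a loop and that only the image of $e$ can be a cut vertex, and the same contradiction with \cref{lem:equivalent_characterisations_of_E2sep}, namely a $v_1$--$v_2$ connection avoiding $v$ and $e$. The only difference is in packaging: you argue the last step through components of $|G|-\overline{e}$ (each one attaches to both endpoints, and one of them misses $v$) rather than through the paper's two paths $P_{\neg 1},P_{\neg 2}$ from an arbitrary point, and this neatly avoids having to check that the paper's initial path segments avoid $v$.
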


\begin{proof}
	Write $\sigma\setminus \rho = \{S_1,\ldots,S_k\}\subseteq \twosepedges(\sigma)$ and set $\rho_i \coloneqq \sigma\setminus \ls S_1,\ldots,S_{i}\rs$ for each $0\leq i \leq k$. We have a chain of inclusions
	\begin{equation*}
		\sigma = \rho_0 \supset \rho_1 \supset \cdots \supset \rho_k = \rho
	\end{equation*}
	and will iteratively show that for all $i$, we have $\rho_i \in \FreeS \setminus \Conebonds$. This is by definition true for $\rho_{0}=\sigma$. Now assume that we have shown that $\rho_{i-1} \in \FreeS \setminus C$. Then we get
	\begin{equation*}
		\rho_{i-1}\setminus \rho_{i} = \ls S_i\rs \subseteq \twosepedges(\sigma)\cap \rho_{i-1}\subseteq \twosepedges(\rho_{i-1}),
	\end{equation*}
	where the last inclusion follows from \cref{lem:collapsing_not_to_C_preserves_2_cut_edges}. Hence, replacing $\sigma$ by $\rho_{i-1}$ and $\rho$ by $\rho_i$, it is enough to prove the lemma under the assumption that $|\sigma\setminus \rho| = 1$.

	To prove this, let $G = \Gamma(\sigma)$ and $H = \Gamma(\rho)$. We have $H = G/e$, where $e\in \twosepedges(G)$. It is clear that $H\in \FreeS$, so we need to show that $H\not\in C$.
	For this, we use the characterisation given by \cref{lem:Conebond_via_cut_vertices}.
	First observe that as $G\in \FreeS\setminus \Conebonds$, it has no cut vertex by \cref{lem:Conebond_via_cut_vertices}. In particular, the edge $e$ is not a loop. This implies that $\rk(H) = \rk(G)$, so $H\not \in \partial \FreeS$.

	It remains to show that $H$ has no cut vertex. That it is connected is clear. It is also clear that no vertex other than $v_e\in V(G/e)$, the one that $e$ collapses to, can be a cut vertex.
	So we need to show that $|G/e| - \ls v_e\rs$, which is homeomorphic to $|G|- \ls \bar{e} \rs$, is connected.
	As $e\in \twosepedges(G)$, \cref{lem:equivalent_characterisations_of_E2sep} says that there is a vertex $v\in V(G)$ such that the endpoints $v_1, v_2$ of $e$ lie in distinct components of $|G|- \ls v, e\rs$.
	Let $w$ be any point on $|G|- \ls \bar{e} \rs$.
	We will show that $w$ lies in the same component of $|G|- \ls \bar{e} \rs$ as $v$.
	As neither $v_1$ nor $v_2$ are cut vertices, there is a path $P_{\neg 1}$ in $G$ from $w$ to $v$ that avoids $v_1$ and a path $P_{\neg 2}$ in $G$ from $w$ to $v$ that avoids $v_2$. If either $P_{\neg 1}$ or $P_{\neg 2}$ avoids both endpoints of $e$, it gives a path from $w$ to $v$ in $|G|- \ls \bar{e} \rs$. Otherwise, $P_{\neg 1}$ contains $v_2$ and $P_{\neg 2}$ contains $v_1$. Their initial segments then give a path $P$ in $G-e$ between the endpoints of $e$ that avoids $v$ (see \cref{fig:collapse_2_cut_no_separator}). This contradicts the assumption that the endpoints lie in different components of $|G|- \ls v, e\rs$.
	\begin{figure}
		\centering
		\includegraphics{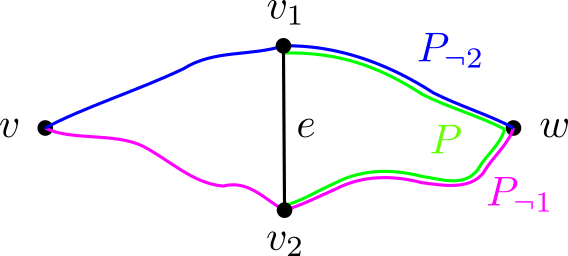}
		\caption{The paths $P_{\neg 1}$, $P_{\neg 2}$ and $P$ in the proof of \cref{lem:collapsing_2_cut_edges}.}
		\label{fig:collapse_2_cut_no_separator}
	\end{figure}
\end{proof}

\begin{corollary}
	\label{cor:2_sep_edges_form_a_forest}
	If $\sigma \in \FreeS\setminus C$, then $\twosepedges(\sigma)$ forms a forest inside $\Gamma(\sigma)$.
\end{corollary}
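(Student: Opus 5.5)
We argue by contradiction, using \cref{lem:collapsing_2_cut_edges} to reduce the claim to a rank count, in the same spirit as the proof of \cref{lem:bridges_are_a_forest}. Write $G=\Gamma(\sigma)$ and suppose the edges in $\twosepedges(\sigma)$ contain a cycle $Z\subseteq G$. We must rule out loops, multiple edges among 2-separator edges, and longer cycles; all of these are cycles, so we treat them uniformly.

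Let $\rho\coloneqq \sigma\setminus \ls S\in\sigma \mid e_S\in E(Z)\rs$. Every removed sphere lies in $\twosepedges(\sigma)$, so $\sigma\setminus\rho\subseteq\twosepedges(\sigma)$, and \cref{lem:collapsing_2_cut_edges} gives $\rho\in\FreeS\setminus\Conebonds$. In particular $\rho\notin\partial\FreeS$, so $\Gamma(\rho)$ has fundamental group of rank $n$.

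On the other hand, $\Gamma(\rho)$ is obtained from $G$ by collapsing all edges of the cycle $Z$. Collapsing a subgraph that contains a cycle lowers the rank of the resulting graph, because the homotopy type changes by collapsing each component of the subgraph and the first Betti number drops by the total first Betti number of the collapsed subgraph. In the sphere picture, this corresponds to a component of $M-\rho$ that is not simply connected. Hence $\rho\in\partial\FreeS$, a contradiction. Therefore $\twosepedges(\sigma)$ contains no cycle, so it forms a forest.

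The only point requiring care is this last identification: that removing the spheres of a cycle from $\sigma$ yields a system in $\partial\FreeS$. We would cite the standard correspondence, already used implicitly in the proof of \cref{lem:collapsing_2_cut_edges} when arguing that collapsing a non-loop edge preserves rank. Under this correspondence, a face $\rho\subseteq\sigma$ of some $\sigma\notin\partial\FreeS$ lies in $\partial\FreeS$ exactly when the collapsed edge set $\sigma\setminus\rho$ contains a cycle of $\Gamma(\sigma)$. The reason is that the vertex of $\Gamma(\rho)$ obtained from that component carries a non-trivial free vertex group. Everything else is immediate.
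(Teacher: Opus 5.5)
Your proof is correct and is essentially the paper's argument. The paper applies \cref{lem:collapsing_2_cut_edges} to $\rho=\sigma\setminus\twosepedges(\sigma)$, collapsing all of $\twosepedges(\sigma)$ at once, and reads off from rank preservation that these edges form a forest. You instead collapse only the edges of a hypothetical cycle and reach a contradiction, which is the same lemma and the same rank count.
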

\begin{proof}
	Let $\rho = \sigma\setminus \twosepedges(\sigma)$. By \cref{lem:collapsing_2_cut_edges}, we have $\rho\in \FreeS\setminus \Conebonds$, so in particular $\rho\not\in \partial \FreeS$. This means that $\rk(\Gamma(\sigma)) = \rk(\Gamma(\rho))$. As $\Gamma(\rho)$ is obtained from $\Gamma(\sigma)$ by collapsing all edges in $\twosepedges(\sigma)$, it follows that $\twosepedges(\sigma)$ forms a forest.	
\end{proof}

Using \cref{lem:collapsing_2_cut_edges}, we get another consequence of \cref{lem:adding_in_2_sep} that we will need later:

\begin{lemma}
	\label{lem:2_sep_collapsed_reduced_size}
	Let $\sigma\in \FreeS\setminus C$ such that $\rho\subset \sigma$ with $\sigma \setminus \rho \subseteq \twosepedges(\sigma)$. Then $\twosepedges(\rho)\subset \twosepedges(\sigma)$.
\end{lemma}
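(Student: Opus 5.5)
We need to show that $\twosepedges(\rho)\subsetneq \twosepedges(\sigma)$, viewing $\twosepedges(\rho)\subseteq\rho\subseteq\sigma$ as subsets of $\sigma$. Strictness is immediate once inclusion is known. Indeed, $\rho\subsetneq\sigma$ and $\sigma\setminus\rho\subseteq\twosepedges(\sigma)$, so there is an element of $\twosepedges(\sigma)$ not in $\rho$, and hence not in $\twosepedges(\rho)$. So the work is the inclusion $\twosepedges(\rho)\subseteq\twosepedges(\sigma)$.

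First I reduce to the case $|\sigma\setminus\rho|=1$, exactly as in the proof of \cref{lem:collapsing_2_cut_edges}. Write $\sigma\setminus\rho=\{T_1,\dots,T_k\}$ and set $\rho_i=\sigma\setminus\{T_1,\dots,T_i\}$. By \cref{lem:collapsing_2_cut_edges}, every $\rho_i$ lies in $\FreeS\setminus\Conebonds$. By \cref{lem:collapsing_not_to_C_preserves_2_cut_edges}, $T_{i}\in\twosepedges(\sigma)\cap\rho_{i-1}\subseteq\twosepedges(\rho_{i-1})$, so each step $\rho_i\subset\rho_{i-1}$ again satisfies the hypotheses with one removed sphere. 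The inclusions then chain together:
\[
\twosepedges(\rho_k)\subseteq\twosepedges(\rho_{k-1})\subseteq\cdots\subseteq\twosepedges(\rho_0).
\]

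For the one-step case, let $\rho=\mu$ and $\sigma=\mu\cup\{T\}$ with $T\in\twosepedges(\mu\cup\{T\})$, where $\mu\in\FreeS\setminus\Conebonds$ by \cref{lem:collapsing_2_cut_edges}. Take $S\in\twosepedges(\mu)$; we must show $S\in\twosepedges(\mu\cup\{T\})$. This should follow from \cref{lem:adding_in_2_sep}, applied with $\mu$ in the role of $\sigma$ and with witness $\rho'\coloneqq\mu\cup\{T\}$. Its hypotheses hold as follows:
\begin{itemize}
\item $\mu\cup\{T\}\in\FreeS$, and in fact $\rho'=\sigma\in\FreeS\setminus\Conebonds$;
\item $\{S\}\subset\rho'\subseteq\mu\cup\{T\}$, with properness because $T\neq S$, since $T\notin\mu$;
\item $T\in\twosepedges(\rho')$ by assumption.
\end{itemize}
The lemma then gives $S\in\twosepedges(\mu\cup\{T\})$, which is the required inclusion.

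The only real point to check is that the hypotheses of \cref{lem:adding_in_2_sep} are met verbatim, in particular that the witness $\rho'$ may be taken to be all of $\mu\cup\{T\}$. The statement allows $\rho'\subseteq\sigma\cup\{T\}$ with equality, so this is fine.
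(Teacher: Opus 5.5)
Your proof is correct and is essentially the paper's argument. The paper builds the chain upward, $\rho_i=\rho\cup\{S_1,\dots,S_i\}$, rather than downward, but at each step it uses the same three lemmas: \cref{lem:collapsing_2_cut_edges}, \cref{lem:collapsing_not_to_C_preserves_2_cut_edges}, and \cref{lem:adding_in_2_sep} with the larger system as the witness. Your explicit remark that strictness of $\twosepedges(\rho)\subset\twosepedges(\sigma)$ follows from $\emptyset\neq\sigma\setminus\rho\subseteq\twosepedges(\sigma)$ is a small addition the paper leaves implicit.
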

\begin{proof}
	Write $\sigma\setminus \rho = \{S_1,\ldots,S_k\}\subseteq \twosepedges(\sigma)$ and for each $0\leq i \leq k$, set $\rho_i \coloneqq \rho\cup \ls S_1,\ldots,S_{i}\rs$.
	We have a chain of inclusions
	\begin{equation*}
		\rho = \rho_0 \subset \rho_1 \subset \cdots \subset \rho_k = \sigma.
	\end{equation*}
	By \cref{lem:collapsing_2_cut_edges}, we have $\rho_i \in \FreeS\setminus C$ for all $i$.
	Let $S\in \twosepedges(\rho)$.
	We prove by induction that $S\in \twosepedges(\rho_i)$ for all $i$.
	For $i=0$, this is true by definition. Now assume that $S \in \twosepedges(\rho_i)$ for some $i$.
	Set $T = S_{i+1}$. 
	Then $\rho_i\cup \ls T\rs = \rho_{i+1} \in \FreeS$.
	We have	$T\in \twosepedges(\sigma)$, so by \cref{lem:collapsing_not_to_C_preserves_2_cut_edges}, we also have $T\in \twosepedges(\rho_{i+1})$. 
	Hence, we can apply \cref{lem:adding_in_2_sep} and get that $S\in \twosepedges(\rho_{i+1})$.
\end{proof}

\section{A filtration of $\Conebonds\hookrightarrow \Ctwobonds$ and its fibres}

\subsection{The filtration}

We are now ready to define the filtration of $\Ctwobonds$ that we will use to prove \cref{thm:C'_he_C}; it is the 2-bond analogue of the filtration in \cref{def:Conebonds}.

\begin{definition}
\label{def:Ctwobonds}
Let $\Ctwobonds_{p,q}$ be the subposet of $\Ctwobonds$ consisting of all $\sigma$ such that 
\begin{enumerate}
\item \label{it:contained_in_C}$\sigma\in C$ or
\item \label{it:more_bonds}$|\twosepedges(\sigma)|> p$ or
\item \label{it:less_non_bonds}$|\twosepedges(\sigma)| = p$ and $|\sigma| \leq p+q$.
\end{enumerate}
\end{definition}

In words, the poset $\Ctwobonds_{p,q}$ consists of all $\sigma$ that either lie in $C$, contain more than $p$ spheres in 2-separators or contain exactly $p$ spheres in 2-separators but at most $q$ spheres not contained in 2-separators.

\begin{lemma}
	\label{lem:filtration_of_Ctwobonds}
	\begin{enumerate}
	\item \label{it:limit_size_two_sep}If $p> 2n-3$, then $\Ctwobonds_{p,q} = \Conebonds$.
	\item \label{it:small_q}If $q\leq n-1$, we have $\Ctwobonds_{p,q} = \Ctwobonds_{p+1,3n-4-p}$ and for $p=0$, even $\Ctwobonds_{0,n-1} = \Ctwobonds_{0,n}$.
	\item \label{it:zero_p}If $q\geq 3n-5$, we have $\Ctwobonds_{0,q} = \Ctwobonds$.
\end{enumerate}
\end{lemma}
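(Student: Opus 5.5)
The plan is to mirror the proof of \cref{lem:filtration_of_Conebonds}, replacing \cref{lem:bridges_are_a_forest} by \cref{cor:2_sep_edges_form_a_forest}. Each item reduces to a bound on $|\twosepedges(\sigma)|$ or on $|\sigma|-|\twosepedges(\sigma)|$ for $\sigma\in\Ctwobonds\setminus\Conebonds$. Note that $\sigma\notin\Conebonds$ means $\sigma\notin\partial\FreeS$. So $\Gamma(\sigma)$ is a connected graph of rank $n$ and degree at least three, with at most $2n-2$ vertices and $3n-3$ edges.

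For \cref{it:limit_size_two_sep}, \cref{cor:2_sep_edges_form_a_forest} says that $\twosepedges(\sigma)$ is a forest in $\Gamma(\sigma)$. Hence $|\twosepedges(\sigma)|\leq|V(\Gamma(\sigma))|-1\leq 2n-3$. If $p>2n-3$, conditions \ref{it:more_bonds} and \ref{it:less_non_bonds} of \cref{def:Ctwobonds} can hold only for $\sigma\in\Conebonds$, so $\Ctwobonds_{p,q}=\Conebonds$.

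For \cref{it:small_q}, collapsing the forest $\twosepedges(\sigma)$ keeps the rank at $n$ (as in the corollary). Therefore $|\sigma|-|\twosepedges(\sigma)|\geq n$, i.e. the remaining edges number at least the rank of the collapsed graph. Consequently, if $q\leq n-1$, no $\sigma\notin\Conebonds$ with $|\twosepedges(\sigma)|=p$ satisfies $|\sigma|\leq p+q$. So $\Ctwobonds_{p,q}$ consists of $\Conebonds$ together with the $\sigma$ having $|\twosepedges(\sigma)|\geq p+1$.

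Now consider $\sigma$ with $|\twosepedges(\sigma)|=p+1$. It satisfies $|\sigma|\leq 3n-3=(p+1)+(3n-4-p)$, so it lies in $\Ctwobonds_{p+1,3n-4-p}$, and the two posets agree. For the extra claim at $p=0$, I would show that no $\sigma\in\Ctwobonds\setminus\Conebonds$ has $\twosepedges(\sigma)=\emptyset$ and exactly $n$ edges. Such a graph has rank $n$ with $n$ edges, hence a single vertex, i.e. a rose. But a rose with $n\geq 2$ has a cut vertex (or already lies in $\partial\FreeS$ if $n=1$), so it lies in $\Conebonds$ by \cref{lem:Conebond_via_cut_vertices}.

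For \cref{it:zero_p}, I need that every $\sigma\in\Ctwobonds\setminus\Conebonds$ with $\twosepedges(\sigma)=\emptyset$ has $|\sigma|\leq 3n-5$. The count $|E|-|V|=n-1$ gives $2|E|\geq 3|V|$ only when all vertices are trivalent, and $|E|=3n-3$ exactly in that case. Each vertex of valence at least $4$ lowers the edge count: $|E|=3n-3-\tfrac12\sum_v(\deg v-3)$. So it suffices to show that the total excess $\sum_v(\deg v-3)$ is at least $3$, or at least $4$ counting parity.

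Here I would use $\sigma\in\Ctwobonds$. By definition, $\sigma\subseteq\tau$ for some $\tau$ whose graph has a 2-bond $\{e,f\}$, and we may take $\tau\notin\Conebonds$ by \cref{lem:rk_preserved_2_connected}. Since $\twosepedges(\sigma)=\emptyset$, neither $e$ nor $f$ survives in $\sigma$, by \cref{lem:collapsing_not_to_C_preserves_2_cut_edges} applied in reverse: surviving edges of $\tau$'s 2-bond lie in $\twosepedges$. So both $e$ and $f$ are collapsed. This yields a 2-vertex cut $\{a,b\}$ in $\Gamma(\sigma)$ (or a single vertex, if $e$ and $f$ collapse together, which is impossible without creating a cut vertex).

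Both sides of $\tau-\{e,f\}$ have nontrivial fundamental group by \cref{lem:every_2_bond_cyclic}. So at $a$ and at $b$, each side contributes at least two half-edges, using the analysis of $E_1,E_2$ in the proof of \cref{lem:equivalent_characterisations_of_E2sep}. A side contributing only one half-edge at $a$ would put that edge in $\twosepedges(\sigma)$. Hence $\deg a,\deg b\geq 4$, giving excess at least $2$, i.e. $|E|\leq 3n-4$. I expect this to be the main obstacle: sharpening the bound to $3n-5$.

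For that step I would try to show that equality forces both $a$ and $b$ to be 4-valent with exactly two half-edges from each side. I would then exhibit an edge lying in a 2-separator, contradicting $\twosepedges(\sigma)=\emptyset$. If this argument fails, I would instead check whether some side must contain a further vertex of degree at least $4$, which would give excess at least $4$ and hence $|E|\leq 3n-5$.
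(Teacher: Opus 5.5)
Your route is the paper's own: the forest bound from \cref{cor:2_sep_edges_form_a_forest} for the first two items, the rose argument for $\Ctwobonds_{0,n-1}=\Ctwobonds_{0,n}$, and, for the last item, the observation that the images $a,b$ of a collapsed 2-bond form a 2-vertex cut at which both vertices have valence at least four. Those parts are fine.

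The gap is in your edge count, which is off by a factor of two. From $2|E|=\sum_v\deg v$ and $|V|=|E|-n+1$ one gets $\sum_v(\deg v-3)=2|E|-3|V|=3n-3-|E|$, i.e.
\[
|E| = 3n-3-\sum_{v}(\deg v-3),
\]
not $3n-3-\tfrac12\sum_v(\deg v-3)$. As a check, the rose has one vertex of degree $2n$ and $n$ edges; your formula gives the non-integer $2n-\tfrac32$. The correct identity is also what gives the bound $3n-4$ in \cref{lem:filtration_of_Conebonds}, where one vertex of valence $\geq 4$ lowers the count by one.

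With the correct identity, the fact you already established, $\deg a,\deg b\geq 4$, gives excess at least $2$. Hence $|\sigma|\leq 3n-5$, which is exactly what \cref{it:zero_p} requires. This is precisely the paper's argument, so no sharpening step is needed.

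As written, though, your proposal stops at $|E|\leq 3n-4$ and leaves the last item unproved. Both fallback strategies aim at false statements. The configuration you hoped to rule out does occur with $\twosepedges(\sigma)=\emptyset$ and $\sigma\in\Ctwobonds\setminus\Conebonds$: $a,b$ both $4$-valent with two half-edges from each side, and all other vertices trivalent.
\begin{itemize}
\item For $n=3$, take the $\theta$-graph: two $4$-valent vertices and $4=3n-5$ edges. It lies in $\Ctwobonds$ by \cref{fig:theta_graph}, and no edge lies in a 2-separator.
\item For $n=5$, glue two copies of $K_4$ minus an edge along their two degree-$2$ vertices. This gives $10=3n-5$ edges.
\end{itemize}
So neither ``equality forces an edge into a 2-separator'' nor ``some side contains a further vertex of valence $\geq 4$'' can be proved. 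Correct the identity and your argument is complete.
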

\begin{proof}
	Similarly to the proof of \cref{lem:filtration_of_Conebonds}, we use that $|\sigma|\leq 3n-3$ for all $\sigma \in \FreeS$.

	For $\sigma \in \Ctwobonds\setminus \Conebonds$, \cref{cor:2_sep_edges_form_a_forest} says that the edges in $\twosepedges(\sigma)$ form a forest.
	This implies that 
	\begin{equation*}
		|\twosepedges(\sigma)| \leq |V(\Gamma(\sigma))| - 1 \leq 2n-3 \text{ and } |\sigma| - |\twosepedges(\sigma)| \geq n.
	\end{equation*}
	This implies \cref{it:limit_size_two_sep} and  \cref{it:small_q} for $p>0$. The sharper version of \cref{it:small_q} for $p=0$ follows from the fact that if $\sigma$ has $n$ edges and does not lie in $\Conebonds$, then $\sigma$ must be a rose and as this has a cut vertex, it is not contained in $\Ctwobonds$.

	To obtain \cref{it:zero_p}, we use that if $\sigma\in \Ctwobonds\setminus \Conebonds$ and $\twosepedges(\sigma) = \emptyset$, then in $\Gamma(\sigma)$, there needs to be a 2-separator (obtained as the image of a collapsed 2-bond, see \cref{rem:2_sepators_in_C}) and these two vertices need to both have valence at least four.
\end{proof}

Using this lemma, we get the following filtration of $\Ctwobonds$:
\begin{multline*}
\Conebonds = \Ctwobonds_{2n-2,\bullet} = \Ctwobonds_{2n-3,n-1} \subseteq \Ctwobonds_{2n-3,n} =  \Ctwobonds_{2n-4,n-1} \subseteq \Ctwobonds_{2n-4,n} \subseteq
\Ctwobonds_{2n-4,n+1} = \\
= \Ctwobonds_{2n-5,n-1} \subseteq \cdots \subseteq \Ctwobonds_{1,3n-4} = \Ctwobonds_{0,n} \subseteq \cdots \subseteq \Ctwobonds_{0,3n-5} = \Ctwobonds.
\end{multline*}

To prove \cref{thm:C'_he_C}, we will show that the inclusion $\Ctwobonds_{0,n}\hookrightarrow \Ctwobonds_{0,n+1}$ is $(2n-3)$-connected and all other inclusions in this filtration are homotopy equivalences.

This makes precise the intuition that the filtration level of $\sigma$ is determined by ``how many 2-bonds it contains'' (see \cref{sec:intuition}): We order the graphs occurring in $\Ctwobonds\setminus \Conebonds$ in such a way that we first add all graphs that have (at least) $2n-3$ edges that are contained in a 2-separator, then all graphs that have (at least) $2n-4$ such edges, and so on. 
We then refine this ordering by saying that among all graphs that have at least $p$ edges in a 2-separator, we first add those with the least number of edges in total, then those with one more edge, and so on.

\subsection{The fibres}

To show that $f\colon\Ctwobonds_{p,q-1}\hookrightarrow \Ctwobonds_{p,q}$ is highly connected, we show that the fibres are all highly connected and then apply Quillen's fibre theorem as stated, see \cref{thm:Quillen_fibre}.
Regarding the fibres, we will show the following two results:

\begin{proposition}
	\label{prop:lower_fibre}
	If $p\geq 1$, then for all $\tau\in \Ctwobonds_{p,q}\setminus \Ctwobonds_{p,q-1}$, the lower fibre $f^{-1}((\Ctwobonds_{p,q})_{\subseteq \tau})$ is contractible.
\end{proposition}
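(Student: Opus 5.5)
I will mirror the proof of \cref{lem:lower_fibre_one_bonds}, with $\twosepedges$ replacing $\onesepedges$ and the lemmas of \cref{sec:graphs_with_2_bonds} replacing their 1-bond analogues. Fix $\tau\in\Ctwobonds_{p,q}\setminus\Ctwobonds_{p,q-1}$. Then $\tau\notin\Conebonds$, $|\twosepedges(\tau)|=p$ and $|\tau|=p+q$. The fibre $f_{\subset\tau}$ is the poset of $\rho\subseteq\tau$ with $\rho\in\Ctwobonds_{p,q-1}$; it contains no $\rho=\tau$. I introduce
\begin{equation*}
	\fibrenontwo\coloneqq\ls \emptyset\neq\rho\subset\tau \mid \twosepedges(\tau)\subseteq\rho\rs ,
\end{equation*}
the systems obtained by collapsing a non-empty set of edges not in 2-separators. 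I will show three things: $\fibrenontwo\subseteq f_{\subset\tau}$; $\rho\mapsto\rho\cup\twosepedges(\tau)$ is a monotone retraction $f_{\subset\tau}\to\fibrenontwo$; and $\fibrenontwo$ has a minimal element when $p\ge1$.

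\emph{Step 1 (inclusion).} Let $\rho\in\fibrenontwo$. If $\rho\in\Conebonds$ we are done. Otherwise \cref{lem:collapsing_not_to_C_preserves_2_cut_edges} gives $\twosepedges(\tau)=\twosepedges(\tau)\cap\rho\subseteq\twosepedges(\rho)$, so $|\twosepedges(\rho)|\ge p$. If the inequality is strict, $\rho$ lies in $\Ctwobonds_{p,q-1}$ by the second clause of \cref{def:Ctwobonds}. If equality holds, then $|\rho|<p+q$ and the third clause applies. So only this easy inclusion is needed here; this is exactly where the failure of the reverse inclusion (cf.\ \cref{fig:new_2_sep_edge}) is harmless.

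\emph{Step 2 (retraction).} Let $\rho\in f_{\subset\tau}$. I need $\rho\cup\twosepedges(\tau)\neq\tau$, i.e.\ $\tau\setminus\rho\not\subseteq\twosepedges(\tau)$; monotonicity, being the identity on $\fibrenontwo$, and compatibility with the order are then clear. Suppose instead $\tau\setminus\rho\subseteq\twosepedges(\tau)$ with $\rho\neq\tau$. By \cref{lem:collapsing_2_cut_edges}, $\rho\in\FreeS\setminus\Conebonds$. By \cref{lem:2_sep_collapsed_reduced_size}, $\twosepedges(\rho)\subsetneq\twosepedges(\tau)$, so $|\twosepedges(\rho)|<p$. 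Then $\rho$ fails all three conditions of \cref{def:Ctwobonds} for $\Ctwobonds_{p,q-1}$, a contradiction. I expect this step to be the crux, since it is where the 2-bond lemmas, in particular \cref{lem:adding_in_2_sep} through \cref{lem:2_sep_collapsed_reduced_size}, do the real work.

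\emph{Step 3 (contractibility).} I take $\twosepedges(\tau)$ itself. It is non-empty since $p\ge1$. It is a proper subset of $\tau$: by \cref{cor:2_sep_edges_form_a_forest} it is a forest, whereas $\Gamma(\tau)$ has rank $n\ge2$. Hence $\twosepedges(\tau)\in\fibrenontwo$, and it is the minimum of $\fibrenontwo$, so $\fibrenontwo$ is contractible. The poset map of Step 2 is a monotone retraction onto this subposet, so $f_{\subset\tau}\simeq\fibrenontwo$ is contractible.
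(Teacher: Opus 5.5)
Your proposal is correct and matches the paper's proof step for step. The paper also introduces $\fibrenontwo$ and shows it lies in the fibre via \cref{lem:collapsing_not_to_C_preserves_2_cut_edges}; it rules out $\tau\setminus\sigma\subseteq\twosepedges(\tau)$ via \cref{lem:collapsing_2_cut_edges} and \cref{lem:2_sep_collapsed_reduced_size} to get the monotone retraction $\sigma\mapsto\sigma\cup\twosepedges(\tau)$, and uses \cref{cor:2_sep_edges_form_a_forest} to see that $\twosepedges(\tau)$ is the minimum of $\fibrenontwo$.
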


\begin{proposition}
	\label{prop:upper_fibre}
	If $p=0$, then for all $\rho\in \Ctwobonds_{p,q}\setminus \Ctwobonds_{p,q-1}$, the upper interval $(\Ctwobonds_{p,q})_{\supset \rho}$ is contractible except if $\Gamma(\rho)$ is a $\theta$-graph. In this case, $(\Ctwobonds_{p,q})_{\supset \rho}$ is homotopy equivalent to a wedge of $(n-1)!$ spheres of dimension $n-3$ and $f^{-1}((\Ctwobonds_{p,q})_{\subseteq \rho})$ is isomorphic to the boundary of an $n$-simplex.
\end{proposition}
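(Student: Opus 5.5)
We follow the pattern of \cref{lem:upper_fibre_one_bonds}. Fix $\rho\in \Ctwobonds_{0,q}\setminus \Ctwobonds_{0,q-1}$, so $\rho\in \FreeS\setminus\Conebonds$, $\twosepedges(\rho)=\emptyset$ and $|\rho|=q$. Write $G=\Gamma(\rho)$ and $\fibrealltwo\coloneqq(\Ctwobonds_{0,q})_{\supset\rho}$. Let $\fibreonlytwo\subseteq \FreeS_{\supset\rho}$ consist of all $\sigma\supset\rho$ with $\sigma\setminus\rho\subseteq\twosepedges(\sigma)$.

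\textbf{Step 1: $\fibreonlytwo\subseteq\fibrealltwo$.} For $\sigma\in\fibreonlytwo$, \cref{lem:rk_preserved_2_connected} gives $\sigma\notin\Conebonds$. Since $\sigma\setminus\rho\neq\emptyset$, we get $|\twosepedges(\sigma)|>0=p$, so $\sigma\in\Ctwobonds_{0,q}$.

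\textbf{Step 2: $\fibreonlytwo$ is a deformation retract of $\fibrealltwo$.} Conversely, let $\sigma\in\fibrealltwo$. Then $\sigma\notin\Conebonds$ and $|\sigma|>q$, so $\twosepedges(\sigma)\neq\emptyset$. By \cref{lem:collapsing_not_to_C_preserves_2_cut_edges}, $\twosepedges(\sigma)\cap\rho\subseteq\twosepedges(\rho)=\emptyset$, hence $\twosepedges(\sigma)\subseteq\sigma\setminus\rho$. Define $r(\sigma)=\rho\cup\twosepedges(\sigma)$. Since $\sigma\setminus r(\sigma)$ consists of non-separating spheres, \cref{lem:2_sep_collapsed_reduced_size} is the wrong direction, so we argue directly. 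By \cref{lem:collapsing_not_to_C_preserves_2_cut_edges}, $\twosepedges(\sigma)\subseteq\twosepedges(r(\sigma))$, so $r(\sigma)\in\fibreonlytwo$ and $r(\sigma)\subseteq\sigma$.

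Monotonicity, i.e.\ $\sigma\subseteq\sigma'\Rightarrow\twosepedges(\sigma)\subseteq\twosepedges(\sigma')$ on $\fibrealltwo$, is the delicate part. I would reduce it to single additions $\sigma'=\sigma\cup\{T\}$ and apply \cref{lem:adding_in_2_sep} with the witness $\rho\cup\{S,T\}$ when $T\in\twosepedges(\sigma')$. If $T\notin\twosepedges(\sigma')$, I would instead use the vertex--edge cut characterisation of \cref{lem:equivalent_characterisations_of_E2sep}. Should this fail, the fallback is to use $\sigma\mapsto\rho\cup\twosepedges(\sigma)$ only as a map $\sigma\ge r(\sigma)$ and invoke Quillen's fibre lemma for the inclusion instead. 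With monotonicity in hand, $r$ is a monotone retraction, so $\fibrealltwo\simeq\fibreonlytwo$.

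\textbf{Step 3: join decomposition of $\fibreonlytwo$.} By \cref{lem:2_sep_pairwise} together with \cref{lem:collapsing_not_to_C_preserves_2_cut_edges}, $\fibreonlytwo$ is a full subcomplex of $\lk_{\FreeS}(\rho)$. Hence it splits as a join $\ast_v\mcP_v$, where $\mcP_v\subseteq\FreeS_v$ consists of the partitions of $E(v)$ whose blow-up edge lies in a 2-separator. Using \cref{lem:equivalent_characterisations_of_E2sep}, a blow-up at $v$ gives such an edge exactly when it splits the half-edges at $v$ according to the sides of a 2-vertex cut $\{v,w\}$ in $G$.

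\textbf{Step 4: contractibility away from $\theta$-graphs.} If $G$ is not a $\theta$-graph, I expect some $\mcP_v$ to be contractible. The approach is to find a cone point, in analogy with \cite[Lemma 2.1]{Conant2005}: take $v$ in a 2-vertex cut $\{v,w\}$, where $G$ has a 2-vertex cut by \cref{rem:2_sepators_in_C}, and take the partition separating off one ``side'' $K$ of $|G|-\{v,w\}$ that is minimal or extremal. The key claim is that this partition is compatible with every other admissible partition, because the sides of distinct 2-vertex cuts through $v$ are nested. This nesting argument, with the case distinctions forced by multi-edges at $v$, is the main obstacle.

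\textbf{Step 5: the $\theta$-graph.} Now let $G$ be the $\theta$-graph. Its two vertices each carry $n+1$ half-edges, and every partition into blocks of size at least $2$ is admissible, so $\mcP_v$ is the full complex $\FreeS_v$ of partitions of an $(n+1)$-set. This is the (proper part of the) tree/partition complex, which is homotopy equivalent to a wedge of $(n-1)!$ spheres. A priori the fibre is the join $\mcP_v\ast\mcP_w$, but the degree condition forces the blow-ups at $v$ and $w$ to be coupled: a pair of blow-ups only yields a 2-bond configuration compatible with the sphere structure in a matched way. I would check this directly to get the stated dimension $n-3$; this coupling, rather than a naive join, needs care.

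\textbf{Step 6: the lower fibre for the $\theta$-graph.} $f^{-1}((\Ctwobonds_{0,q})_{\subseteq\rho})$ consists of the proper nonempty subsets of the $n+1$ edges. Every proper face lies in $\partial\FreeS$, since collapsing an edge gives a loop-bearing graph, or is a rose in $\Conebonds$. This poset is the boundary of an $n$-simplex.
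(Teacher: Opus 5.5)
The main gap is Step 3. $\fibreonlytwo$ is \emph{not} a full subcomplex of $\lk_{\FreeS}(\rho)$, and it does not split as a join of the $\twosepposet_v$.

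\cref{lem:2_sep_pairwise} has a hypothesis on pairs, $S,T\in\twosepedges(\rho\cup\ls S,T\rs)$, so it only shows that $\fibreonlytwo$ is a flag complex. Fullness holds inside a single $\twosepposet_v$ (\cref{lem:P_v_full_subcomplex}) but fails across different vertices of $G$ (bottom row of \cref{fig:not_full_subcomplex}). For the $\theta$-graph, let $S$ blow up $v$ and $T$ blow up $w$ via crossing partitions of the $n+1$ edges. Both are vertices of $\fibreonlytwo$ and $\rho\cup\ls S,T\rs\in\FreeS$. However, $\Gamma(\rho\cup\ls S,T\rs)$ contains two disjoint paths between the endpoints of $e_S$, so the edge $\ls S,T\rs$ is missing. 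A join would give $\twosepposet_v\ast\twosepposet_w$, a wedge of $((n-1)!)^2$ spheres of dimension $2n-5$, which contradicts the statement. You notice this tension in Step 5 but do not resolve it, and the coupling comes from the 2-separator condition, not from degrees.

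For the same reason Step 4 is incomplete: a cone point of one $\twosepposet_v$ must also be joined to every vertex of $\twosepposet_u$ for $u\neq v$. The missing ingredients are:
\begin{enumerate}
\item Vertices of $\twosepposet_v$ and of $\twosepposet_w\setminus\twosepposet_{wv}$ always span an edge (\cref{lem:helper2_1}, via \cref{lem:adding_in_2_sep}).
\item For $p=0$, each component of $|G|-\ls v,w\rs$ is either a single edge or meets both $v$ and $w$ in at least two edges (\cref{lem:vw_comps_for_p0}). This yields a canonical isomorphism $\phi_{vw}\colon\twosepposet_{vw}\to\twosepposet_{wv}$.
\item $S\in\twosepposet_{vw}$ and $T\in\twosepposet_{wv}$ span an edge iff $\phi_{vw}(S)$ and $T$ are compatible (\cref{lem:helper2_2}).
\end{enumerate}
With these, take a cut with $\twosepposet_{vw}\neq\emptyset$ (\cref{lem:2_separators_in_Ctwobonds}). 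The partition splitting off a $vw$-component with at least two edges exists unless $G$ is a $\theta$-graph, and it is a cone point of all of $\fibreonlytwo$. Your nesting idea only covers the part inside $\twosepposet_v$, i.e.\ \cref{lem:helper1}. For the $\theta$-graph, the monotone map $\sigma_v\cup\sigma_w\mapsto(\sigma_v\cup\phi_{wv}(\sigma_w))\cup(\sigma_w\cup\phi_{vw}(\sigma_v))$ retracts $\fibreonlytwo$ onto the diagonal $\ls\sigma_v\cup\phi_{vw}(\sigma_v)\rs\cong\twosepposet_{vw}$, which is Vogtmann's complex of compatible splits of an $(n+1)$-set.

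Second, the retraction $r(\sigma)=\rho\cup\twosepedges(\sigma)$ in Step 2 is not monotone, and your single-step reduction cannot be completed when $T\notin\twosepedges(\sigma')$. For the $\theta$-graph, every $\rho\cup\ls X\rs$ lies in $\fibreonlytwo$ and is fixed by $r$. Monotonicity of $r$ would therefore force $\sigma\setminus\rho\subseteq\twosepedges(\sigma)$ for all $\sigma\in\fibrealltwo$, i.e.\ $\fibrealltwo=\fibreonlytwo$. This is false, as the top row of \cref{fig:not_full_subcomplex} shows. Concretely, in rank $5$ with edges $1,\dots,6$, blow up $v$ via $12|3456$ and $123|456$ and blow up $w$ via $13|2456$; then only the second sphere at $v$ lies in $\twosepedges$.

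Your fallback does work once made precise. Apply Quillen's fibre lemma to the inclusion $\fibreonlytwo\hookrightarrow\fibrealltwo$. For $\tau\in\fibrealltwo$, contract $\ls\sigma\in\fibreonlytwo\mid\sigma\subseteq\tau\rs$ by the monotone map $\sigma\mapsto\sigma\cup\twosepedges(\tau)$ onto its minimum $\rho\cup\twosepedges(\tau)$. That the image stays in $\fibreonlytwo$ uses \cref{lem:collapsing_not_to_C_preserves_2_cut_edges} and precisely \cref{lem:2_sep_collapsed_reduced_size}, the lemma you set aside as pointing the wrong way. This is the contraction the paper uses, organised through the relative filtration $X_{r,s}$ in \cref{lem:upper_fibre_simeq_fibreonlytwo}. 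Steps 1 and 6 are fine.
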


\subsection{The lower fibre}
\label{sec:lower_fibre}
In this section, we prove \cref{prop:lower_fibre}. We fix $p$ (for now we do not assume $p\geq 1$), $q$ and $\tau\in \Ctwobonds_{p,q}\setminus \Ctwobonds_{p,q-1}$. Let $f_{\subset\tau}\coloneqq f^{-1}((\Ctwobonds_{p,q})_{\subseteq \tau})$. 

\begin{lemma}
	\label{lem:characterisation_lower_fibre}
	We have $\tau\in \FreeS\setminus \Conebonds$, $|\twosepedges(\tau)|=p$, $|\tau| = p+q$ and 
	\begin{equation*}
		f_{\subset\tau} = \ls \emptyset\neq \sigma\subset \tau \,\middle|\, \sigma\in \Conebonds \text{ or } |\twosepedges(\sigma)| \geq p\rs.
	\end{equation*}
\end{lemma}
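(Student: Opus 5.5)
We read off both parts of the statement from \cref{def:Ctwobonds}, arguing exactly as in the 1-bond case (the characterisation used in \cref{lem:lower_fibre_one_bonds}). First we treat $\tau$. Since $\tau\in\Ctwobonds_{p,q}\setminus\Ctwobonds_{p,q-1}$, it is not in $\Conebonds$, as otherwise it would lie in $\Ctwobonds_{p,q-1}$ by \cref{it:contained_in_C}. For the same reason $|\twosepedges(\tau)|\leq p$, since otherwise \cref{it:more_bonds} would put it in $\Ctwobonds_{p,q-1}$. But $\tau\in\Ctwobonds_{p,q}$ and $\tau\notin\Conebonds$, so \cref{it:more_bonds} or \cref{it:less_non_bonds} holds for $(p,q)$. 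Hence $|\twosepedges(\tau)|=p$ and $|\tau|\leq p+q$. Finally, $|\tau|\leq p+q-1$ would place $\tau$ in $\Ctwobonds_{p,q-1}$, so $|\tau|=p+q$.

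Next we describe the fibre. By definition, $f_{\subset\tau}$ consists of those $\sigma\in\Ctwobonds_{p,q-1}$ with $\sigma\subseteq\tau$. Here $\sigma\neq\tau$, since $\tau\notin\Ctwobonds_{p,q-1}$, and $\sigma\neq\emptyset$ because we work with simplices of $\FreeS$. Such a $\sigma$ lies in $\Conebonds$, or has $|\twosepedges(\sigma)|>p$, or has $|\twosepedges(\sigma)|=p$ and $|\sigma|\leq p+q-1$. This gives the inclusion "$\subseteq$".

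For the reverse inclusion, take $\emptyset\neq\sigma\subsetneq\tau$ with $\sigma\in\Conebonds$ or $|\twosepedges(\sigma)|\geq p$. Both $\Conebonds$ and $\Ctwobonds$ are downwards closed and $\tau\in\Ctwobonds$, so $\sigma\in\Ctwobonds$. If $\sigma\in\Conebonds$, we are done. If $|\twosepedges(\sigma)|>p$, then \cref{it:more_bonds} applies. If $|\twosepedges(\sigma)|=p$, then $|\sigma|<|\tau|=p+q$, so $|\sigma|\leq p+q-1$ and \cref{it:less_non_bonds} applies. In every case $\sigma\in\Ctwobonds_{p,q-1}$.

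Every step is a direct unwinding of the definitions. The only point needing care is checking that $\sigma$ lies in the ambient poset $\Ctwobonds$, which holds because $\Ctwobonds$ is downwards closed.
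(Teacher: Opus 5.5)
Your proof is correct and is the same argument as the paper's: you unwind \cref{def:Ctwobonds} directly, and the size condition $|\sigma|\leq p+q-1$ is automatic for proper faces because $|\tau|=p+q$. Your remark that $\sigma\in\Ctwobonds$ holds because $\Ctwobonds$ is downwards closed makes explicit a step the paper leaves implicit.
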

\begin{proof}
	The statements about $\tau$ follow immediately from the definition of $\Ctwobonds_{p,q}$.
	For the characterisation of the lower fibre, note that $\sigma \in f_{\subset\tau}$ if and only if $\emptyset\neq \sigma\subset \tau$ and $\sigma\in \Ctwobonds_{p,q-1}$. Using \cref{def:Ctwobonds}, the latter means that $\sigma\in \Conebonds$, $|\twosepedges(\sigma)| > p$ or $|\twosepedges(\sigma)| = p$ and $|\sigma| \leq p+q-1$. The condition $|\sigma| \leq p+q-1$ is vacuous for $\sigma\subset \tau$ as $|\tau| = p+q$.
\end{proof}

\subsubsection{Alternative model for the lower fibre}

Recall from \cref{sec:intuition} that the lower fibre of $\tau$ should intuitively consist of all $\sigma$ such that $\Gamma(\sigma)$ does not have ``less 2-bonds'' than $\Gamma(\tau)$, i.e.~is obtained by only collapsing edges that do not lie in 2-separators. That the fibre looks like this is not true right away (see \cref{lem:characterisation_lower_fibre}), but we will show now that this intuition is correct up to homotopy equivalence.

Let $\fibrenontwo$ be the subposet of $\FreeS_{\subset\tau}$ consisting of those $\emptyset\neq\sigma\subset \tau$ that are obtained by only removing spheres that do not lie in $\twosepedges(\tau)$,
\begin{equation*}
	\fibrenontwo = \ls \emptyset\neq \sigma\subset \tau \mid \twosepedges(\tau) \subseteq \sigma \rs.
\end{equation*}
It is easy to see that this is a subposet of the lower fibre:

\begin{lemma}
	\label{lem:fibrenontwo_subseteq_lower_fibre}
	We have $\fibrenontwo\subseteq f_{\subset\tau}$.
\end{lemma}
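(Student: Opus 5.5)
To prove $\fibrenontwo\subseteq f_{\subset\tau}$, I would take an arbitrary $\sigma\in\fibrenontwo$ and verify the criterion of \cref{lem:characterisation_lower_fibre}. So $\sigma$ satisfies $\emptyset\neq\sigma\subset\tau$ and $\twosepedges(\tau)\subseteq\sigma$. If $\sigma\in\Conebonds$, there is nothing to prove. So the only case needing work is $\sigma\in\FreeS\setminus\Conebonds$, where I have to show $|\twosepedges(\sigma)|\geq p$.

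In that case, both $\sigma$ and $\tau$ lie in $\FreeS\setminus\Conebonds$; the latter holds by \cref{lem:characterisation_lower_fibre}. Since $\sigma\subseteq\tau$, \cref{lem:collapsing_not_to_C_preserves_2_cut_edges} gives $\twosepedges(\tau)\cap\sigma\subseteq\twosepedges(\sigma)$. By assumption $\twosepedges(\tau)\subseteq\sigma$, so the left-hand side is all of $\twosepedges(\tau)$. Hence $\twosepedges(\tau)\subseteq\twosepedges(\sigma)$, and therefore
\begin{equation*}
	|\twosepedges(\sigma)|\geq|\twosepedges(\tau)|=p,
\end{equation*}
using $|\twosepedges(\tau)|=p$ from \cref{lem:characterisation_lower_fibre}. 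This puts $\sigma$ in $f_{\subset\tau}$.

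I expect no real obstacle. The key point is that \cref{lem:collapsing_not_to_C_preserves_2_cut_edges} requires both systems to lie outside $\Conebonds$, and the case split handles exactly that. There is no need to control whether new 2-separator edges appear, since only a lower bound on $|\twosepedges(\sigma)|$ is needed.
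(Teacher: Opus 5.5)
Your proof is correct and follows the paper's argument essentially verbatim: reduce to $\sigma\notin\Conebonds$, then apply \cref{lem:collapsing_not_to_C_preserves_2_cut_edges} together with $\twosepedges(\tau)\subseteq\sigma$ to get $|\twosepedges(\sigma)|\geq p$. The only cosmetic difference is that you take $\tau\notin\Conebonds$ directly from \cref{lem:characterisation_lower_fibre}, whereas the paper deduces it from $\sigma\notin\Conebonds$ via \cref{lem:rk_preserved_2_connected}.
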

\begin{proof}
	Let $\sigma\in \fibrenontwo$. By definition, we have $\emptyset\neq \sigma\subset \tau$. By \cref{lem:characterisation_lower_fibre}, we need to show that $\sigma \in \Conebonds$ or $|\twosepedges(\sigma)| \geq p$.
	So assume that $\sigma\in \FreeS \setminus \Conebonds$. By \cref{lem:rk_preserved_2_connected}, we have $\tau \in \FreeS\setminus C$.
	Then by \cref{lem:collapsing_not_to_C_preserves_2_cut_edges}, we have $\twosepedges(\tau)\subseteq \twosepedges(\sigma)$, so in particular $|\twosepedges(\sigma)|\geq |\twosepedges(\tau)| = p$.
\end{proof}

We next observe that when passing to any $\sigma\in f_{\subset\tau}$, we need to collapse at least one edge that does not lie in a 2-separator:

\begin{lemma}
\label{lem:collapsed_things_outside_2_bond}
If $\sigma \in f_{\subset\tau}$, then $\tau\setminus \sigma \not\subseteq \twosepedges(\tau)$.
\end{lemma}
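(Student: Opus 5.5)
We argue by contradiction: suppose $\sigma\in f_{\subset\tau}$ satisfies $\tau\setminus\sigma\subseteq\twosepedges(\tau)$, and show that $\sigma\notin f_{\subset\tau}$. By \cref{lem:characterisation_lower_fibre}, $\tau\in\FreeS\setminus\Conebonds$ with $|\twosepedges(\tau)|=p$, and $\sigma$ is a proper nonempty subset of $\tau$. The hypothesis is exactly the one of \cref{lem:collapsing_2_cut_edges}, which gives $\sigma\in\FreeS\setminus\Conebonds$. So the first alternative in the description of $f_{\subset\tau}$ in \cref{lem:characterisation_lower_fibre} fails.

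It remains to rule out the second alternative, $|\twosepedges(\sigma)|\geq p$. Since $\sigma\subset\tau$ is proper, $\sigma\in\FreeS\setminus\Conebonds$ and $\tau\setminus\sigma\subseteq\twosepedges(\tau)$, \cref{lem:2_sep_collapsed_reduced_size} gives $\twosepedges(\sigma)\subset\twosepedges(\tau)$. If this inclusion is strict as sets, then $|\twosepedges(\sigma)|<|\twosepedges(\tau)|=p$, and we are done. I expect the main point to check is that the inclusion really is strict. Since $\twosepedges(\sigma)\subseteq\sigma$ while $\tau\setminus\sigma\neq\emptyset$ lies in $\twosepedges(\tau)$, any sphere of $\tau\setminus\sigma$ belongs to $\twosepedges(\tau)\setminus\twosepedges(\sigma)$. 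This gives strictness regardless of how the symbol $\subset$ in \cref{lem:2_sep_collapsed_reduced_size} is read.

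Therefore $|\twosepedges(\sigma)|\leq p-1<p$ and $\sigma\notin\Conebonds$. By \cref{lem:characterisation_lower_fibre}, $\sigma\notin f_{\subset\tau}$, contradicting the assumption. Hence every $\sigma\in f_{\subset\tau}$ satisfies $\tau\setminus\sigma\not\subseteq\twosepedges(\tau)$. No assumption on $p$ beyond the setup is needed; in particular, the argument also works for $p=0$, where the hypothesis forces $\sigma=\tau$, which is excluded anyway.
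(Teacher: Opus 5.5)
Your proof is correct and is the paper's argument: by contrapositive, \cref{lem:collapsing_2_cut_edges} gives $\sigma\notin\Conebonds$, \cref{lem:2_sep_collapsed_reduced_size} gives $|\twosepedges(\sigma)|<p$, and \cref{lem:characterisation_lower_fibre} then excludes $\sigma$ from $f_{\subset\tau}$. The paper leaves implicit that the inclusion $\twosepedges(\sigma)\subseteq\twosepedges(\tau)$ is strict, and your observation settles this: any sphere of $\tau\setminus\sigma$ lies in $\twosepedges(\tau)$ but not in $\twosepedges(\sigma)\subseteq\sigma$.
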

\begin{proof}
	We prove the contrapositive: Assume that $\sigma\subset \tau$ with $\tau\setminus \sigma \subseteq \twosepedges(\tau)$.
	Then \cref{lem:collapsing_2_cut_edges} implies that $\sigma\not \in \Conebonds$ because $\tau\not\in \Conebonds$ and \cref{lem:2_sep_collapsed_reduced_size} implies that $|\twosepedges(\sigma)|<|\twosepedges(\tau)| = p$.
	Now \cref{lem:characterisation_lower_fibre} implies that $\sigma\not \in f_{\subset\tau}$.
\end{proof}

Using the previous two lemmas, we can show that the lower fibre is homotopy equivalent to $\fibrenontwo$.

\begin{lemma}
	\label{prop:lower_fibre_simeq_fibrenontwo}
	We have $f_{\subset\tau} \simeq \fibrenontwo$.
\end{lemma}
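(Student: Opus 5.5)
The plan is to mimic the second step of the proof of \cref{lem:lower_fibre_one_bonds}. We define a map
\begin{equation*}
	r\colon f_{\subset\tau}\to \fibrenontwo, \quad \sigma\mapsto \sigma\cup \twosepedges(\tau),
\end{equation*}
show that it is a well-defined monotone poset map restricting to the identity on $\fibrenontwo$, and conclude that $\fibrenontwo$ is a deformation retract of $f_{\subset\tau}$. By \cref{lem:fibrenontwo_subseteq_lower_fibre}, we already know that $\fibrenontwo\subseteq f_{\subset\tau}$.

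The steps are as follows. First, $r(\sigma)$ is a face of $\tau$ containing $\twosepedges(\tau)$, and it is non-empty because $\sigma\neq\emptyset$. By \cref{lem:collapsed_things_outside_2_bond}, $\tau\setminus\sigma$ contains a sphere not in $\twosepedges(\tau)$, and this sphere is not in $r(\sigma)$. Hence $r(\sigma)\subsetneq\tau$ is a proper face, so $r(\sigma)\in\fibrenontwo$. Second, $r$ is order preserving, since $\sigma\subseteq\sigma'$ implies $\sigma\cup\twosepedges(\tau)\subseteq\sigma'\cup\twosepedges(\tau)$. It also satisfies $r(\sigma)\supseteq\sigma$, so it is monotone. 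Third, if $\sigma\in\fibrenontwo$, then $\twosepedges(\tau)\subseteq\sigma$ and so $r(\sigma)=\sigma$.

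For the final step, view $r$ as a self-map of $f_{\subset\tau}$. This is legitimate because its image lies in $\fibrenontwo\subseteq f_{\subset\tau}$. Since $r(\sigma)\geq\sigma$, the standard comparison argument shows that $r$ is homotopic to the identity. As $r$ retracts onto $\fibrenontwo$, the inclusion $\fibrenontwo\hookrightarrow f_{\subset\tau}$ is a homotopy equivalence.

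I expect no real obstacle here. The substantive inputs were already isolated in \cref{lem:collapsed_things_outside_2_bond}, which guarantees properness, and \cref{lem:fibrenontwo_subseteq_lower_fibre}, which guarantees that the image lies in the fibre. The only point to check is that $r(\sigma)$ really lies in $\FreeS$, and this holds because it is a subset of the simplex $\tau$.
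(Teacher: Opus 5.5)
Your proposal is correct and follows essentially the same approach as the paper. The paper also retracts $f_{\subset\tau}$ onto $\fibrenontwo$ via the monotone map $\sigma\mapsto\sigma\cup\twosepedges(\tau)$, and it likewise uses \cref{lem:collapsed_things_outside_2_bond} to get properness and \cref{lem:fibrenontwo_subseteq_lower_fibre} for the inclusion of $\fibrenontwo$ in the fibre.
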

\begin{proof}
	By \cref{lem:fibrenontwo_subseteq_lower_fibre}, we know that $\fibrenontwo$ is a subposet of $f_{\subset\tau}$.
	We define a monotone poset map $f_{\subset\tau} \to \fibrenontwo$ that restricts to the identity on $\fibrenontwo$ as follows:
	For $\sigma\in f_{\subset\tau}$, let $\sigma\subseteq \sigma' \subseteq \tau$ be defined by $\sigma' = \twosepedges(\tau)\cup \sigma$.
	To see that $\sigma'\in \fibrenontwo$, it is enough to note that $\sigma'\neq\tau$ by \cref{lem:collapsed_things_outside_2_bond}. 
	That the assignment $\sigma\mapsto\sigma'$ is monotone and restricts to the identity on $\fibrenontwo$ is clear.
\end{proof}

\subsubsection{Contractibility of the lower fibre}
We now assume that $p\geq 1$ and prove \cref{prop:lower_fibre}.

\begin{lemma}
	\label{lem:fibrenontwo_contractible}
	If $p\geq 1$, then the poset $\fibrenontwo$ is contractible.
\end{lemma}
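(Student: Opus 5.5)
The plan mirrors the third step of the proof of \cref{lem:lower_fibre_one_bonds}: exhibit a minimal element of $\fibrenontwo$, namely $\twosepedges(\tau)$ itself. Recall that
\begin{equation*}
	\fibrenontwo = \ls \emptyset\neq \sigma\subset \tau \mid \twosepedges(\tau) \subseteq \sigma \rs,
\end{equation*}
so every element of this poset contains $\twosepedges(\tau)$. Hence it suffices to show that $\twosepedges(\tau)$ itself lies in $\fibrenontwo$. Then it is the minimum of the poset, and a poset with a minimum is contractible; equivalently, the constant map to it is monotone.

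There are two conditions to check. First, $\twosepedges(\tau)\neq\emptyset$. This holds because $|\twosepedges(\tau)| = p\geq 1$ by \cref{lem:characterisation_lower_fibre}. Second, $\twosepedges(\tau)$ must be a proper subset of $\tau$. For this I will use \cref{cor:2_sep_edges_form_a_forest}: since $\tau\in\FreeS\setminus\Conebonds$ (again by \cref{lem:characterisation_lower_fibre}), the edges of $\twosepedges(\tau)$ form a forest in $\Gamma(\tau)$. But $\Gamma(\tau)$ has rank $n\geq 2$, so $\Gamma(\tau)$ is not a forest. Therefore $\twosepedges(\tau)\neq\tau$, and so $\twosepedges(\tau)\subsetneq\tau$.

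Note that $\twosepedges(\tau)$ is a face of $\tau$, and faces of simplices in $\FreeS$ are again simplices. So $\twosepedges(\tau)\in\FreeS_{\subset\tau}$ and hence in $\fibrenontwo$. No step here seems to be a genuine obstacle; the only point needing care is properness, and it follows directly from the forest corollary.

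Combining this with \cref{prop:lower_fibre_simeq_fibrenontwo} then gives \cref{prop:lower_fibre}.
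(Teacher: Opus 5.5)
Your proposal is correct and follows essentially the same argument as the paper: $\twosepedges(\tau)$ is non-empty because $p\geq 1$ and is a proper subset of $\tau$ by \cref{cor:2_sep_edges_form_a_forest}, so it is the unique minimal element of $\fibrenontwo$. Hence $\fibrenontwo$ is contractible.
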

\begin{proof}
	Let $\hat \tau \coloneqq \twosepedges(\tau)$. 
	As $p\geq 1$, we have $\emptyset\neq \hat{\tau}$.
	By \cref{cor:2_sep_edges_form_a_forest}, $\twosepedges(\tau)$ forms a forest in $\Gamma(\tau)$, so we have $\hat{\tau}\subset \tau$.
	It follows that $\hat{\tau}\in\fibrenontwo$. By definition, every $\sigma\in\fibrenontwo$ satisfies $\hat{\tau}\subseteq\sigma $. So $\hat{\tau}$ forms a unique minimal element in $\fibrenontwo$ and gives a cone point.
\end{proof}

\begin{proof}[Proof of \cref{prop:lower_fibre}]
	The claim of \cref{prop:lower_fibre} follows immediately from \cref{prop:lower_fibre_simeq_fibrenontwo} and \cref{lem:fibrenontwo_contractible}.
\end{proof}

Using \cref{prop:lower_fibre} and \cref{thm:Quillen_fibre}, our filtration can be rewritten as
\begin{equation*}
\Conebonds \hookrightarrow \Ctwobonds_{1,3n-4} = \Ctwobonds_{0,n} \subset \Ctwobonds_{0,n+1} \subset \cdots \subset \Ctwobonds_{0,3n-5} = \Ctwobonds,
\end{equation*}
where the first inclusion is a homotopy equivalence.
We will now consider the upper intervals to study the remaining inclusions.
We will see that $\Ctwobonds_{0,n}\subset \Ctwobonds_{0,n+1}$ is $(2n-3)$-connected and that all remaining inclusions are homotopy equivalences.

\subsection{The upper interval}
In this section, we start working towards \cref{prop:upper_fibre}.
We fix $p$ (for now we do not assume $p=0$), $q$ and $\rho\in \Ctwobonds_{p,q}\setminus \Ctwobonds_{p,q-1}$. Let $\fibrealltwo\coloneqq (\Ctwobonds_{p,q})_{\supset \rho}$.

\begin{lemma}
	\label{lem:characterisation_upper_fibre}
	We have $\rho\not\in \Conebonds$, $|\twosepedges(\rho)|=p$, $|\rho| = p+q$ and
	\begin{equation*}
		\fibrealltwo = \ls \rho \subset \tau\in \FreeS \,\middle|\, |\twosepedges(\tau)|>p \rs.
\end{equation*}
\end{lemma}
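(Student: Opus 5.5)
The statements about $\rho$ itself come straight from \cref{def:Ctwobonds}, just as in \cref{lem:characterisation_lower_fibre}. Since $\rho\in \Ctwobonds_{p,q}\setminus\Ctwobonds_{p,q-1}$, condition \cref{it:contained_in_C} fails, so $\rho\notin\Conebonds$. Condition \cref{it:more_bonds} also fails, which gives $|\twosepedges(\rho)|\leq p$. Hence $\rho$ enters through \cref{it:less_non_bonds}, and we get $|\twosepedges(\rho)|=p$ and $|\rho|\leq p+q$. Because $\rho\notin\Ctwobonds_{p,q-1}$, the bound $|\rho|\leq p+q-1$ must fail, so $|\rho|=p+q$.

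For the description of $\fibrealltwo$, I will take $\tau\supset\rho$ with $\tau\in\FreeS$ and check both inclusions.

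First suppose $\tau\in\Ctwobonds_{p,q}$. By \cref{lem:rk_preserved_2_connected}, part 2, applied to $\rho\subseteq\tau$ with $\rho\notin\Conebonds$, we get $\tau\notin\Conebonds$. Since $|\tau|>|\rho|=p+q$, condition \cref{it:less_non_bonds} cannot hold for $\tau$. The only remaining option is \cref{it:more_bonds}, so $|\twosepedges(\tau)|>p$.

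Conversely, suppose $|\twosepedges(\tau)|>p$. Then $\tau$ satisfies \cref{it:more_bonds}, provided it lies in $\Ctwobonds$ at all. So it remains to show $\tau\in\Ctwobonds$. We again have $\tau\notin\Conebonds$, and $\twosepedges(\tau)\neq\emptyset$ because $p\geq 0$. Pick $S\in\twosepedges(\tau)$. By definition of $\twosepedges$, there is $\tau'\supseteq\tau$ in $\FreeS$ such that $e_S$ lies in a 2-bond of $\Gamma(\tau')$. Hence $\tau'\in\Ctwobonds$, and since $\Ctwobonds$ is downwards closed, also $\tau\in\Ctwobonds$. This shows $\tau\in\Ctwobonds_{p,q}$.

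There is no real obstacle here. The only point needing care is that membership in $\Ctwobonds$ is implied by a nonempty $\twosepedges$ set, which follows from the definition together with downward closure.
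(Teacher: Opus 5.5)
Your proof is correct and follows the same route as the paper. The facts about $\rho$ are read off from \cref{def:Ctwobonds}, and for $\tau\supset\rho$ you use $|\tau|>p+q$ and $\tau\notin\Conebonds$ to reduce membership in $\Ctwobonds_{p,q}$ to $|\twosepedges(\tau)|>p$, while also making explicit a step the paper leaves implicit: a nonempty $\twosepedges(\tau)$ already forces $\tau\in\Ctwobonds$, by the definition of $\twosepedges$ and downward closure.
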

\begin{proof}
	The statements about $\rho$ follow immediately from the definition of $\Ctwobonds_{p,q}$.
	By definition, we have $\tau\in \fibrealltwo$ if and only if $\tau\in \FreeS$, $\rho\subset \tau$ and $\tau\in \Ctwobonds_{p,q}$. 
	If $\rho\subset \tau$, we clearly have $|\tau| > |\rho| = p+q$ and by \cref{lem:rk_preserved_2_connected}, we also have $\tau\in \FreeS\setminus C$. Hence by \cref{def:Ctwobonds}, we have $\tau\in \Ctwobonds_{p,q}$ if and only if $|\twosepedges(\tau)|>p$.
\end{proof}

\subsubsection{Alternative model for the upper interval}
Recall from \cref{sec:intuition} that the upper interval of $\rho$ should intuitively consist of all $\sigma$ such that $\Gamma(\sigma)$ is obtained from $\Gamma(\tau)$ by ``adding 2-bonds'' (or rather edges in 2-separators). Again, that the interval looks like this is not true right away (see \cref{lem:characterisation_upper_fibre}), but we will show that this intuition is correct up to homotopy equivalence.

Let $\fibreonlytwo$ be the subposet of $\FreeS_{\supset\rho}$ consisting of those $\tau\supset \rho$ from which $\rho$ is obtained by only removing spheres that lie in a 2-separator (of $\tau$),
\begin{equation*}
	\fibreonlytwo = \ls \rho \subset \tau\in \FreeS \,\middle|\, \tau\setminus\rho \subseteq \twosepedges(\tau) \rs.
\end{equation*}
This is a subposet of the upper interval:

\begin{lemma}
\label{lem:fibreonlytwo_subseteq_upper_fibre}
We have $\fibreonlytwo\subseteq \fibrealltwo$.
\end{lemma}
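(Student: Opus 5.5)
We need to show: for $\tau\in \fibreonlytwo$, we have $\tau\in\fibrealltwo$. By \cref{lem:characterisation_upper_fibre}, this means checking $\rho\subset\tau\in\FreeS$ and $|\twosepedges(\tau)|>p$. The first condition holds by definition, so the whole task is the strict inequality.

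First, since $\rho\notin\Conebonds$ and $\rho\subset\tau$, \cref{lem:rk_preserved_2_connected} gives $\tau\in\FreeS\setminus\Conebonds$. This puts us in the setting where the lemmas of \cref{sec:graphs_with_2_bonds} apply.

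Next, I would show that the set $\twosepedges(\rho)$ lies inside $\twosepedges(\tau)$. The membership $\tau\in\fibreonlytwo$ says exactly that $\tau\setminus\rho\subseteq\twosepedges(\tau)$. This is the hypothesis of \cref{lem:2_sep_collapsed_reduced_size}, applied with $\sigma=\tau$, and that lemma gives $\twosepedges(\rho)\subset\twosepedges(\tau)$. Its proof shows in particular that every $S\in\twosepedges(\rho)$ stays in $\twosepedges(\tau)$.

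Then I count elements. The set $\tau\setminus\rho$ is non-empty and contained in $\twosepedges(\tau)$. It is disjoint from $\rho$, hence disjoint from $\twosepedges(\rho)\subseteq\rho$. So $\twosepedges(\tau)$ contains the disjoint union $\twosepedges(\rho)\sqcup(\tau\setminus\rho)$. Therefore
\[
|\twosepedges(\tau)|\;\geq\; |\twosepedges(\rho)|+|\tau\setminus\rho|\;\geq\; p+1.
\]
This gives $\tau\in\fibrealltwo$.

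There is no real obstacle here. The only point needing care is reading \cref{lem:2_sep_collapsed_reduced_size} as the inclusion $\twosepedges(\rho)\subseteq\twosepedges(\tau)$, since its strict-inclusion notation could be misread. Strictness then comes for free from the disjoint set $\tau\setminus\rho$.
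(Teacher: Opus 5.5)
Your proof is correct and follows the paper's argument: the paper also applies \cref{lem:2_sep_collapsed_reduced_size} to $\tau$ to get $p=|\twosepedges(\rho)|<|\twosepedges(\tau)|$, and then concludes via \cref{lem:characterisation_upper_fibre}. You are slightly more careful than the paper in two places: you check explicitly that $\tau\in\FreeS\setminus\Conebonds$, which that lemma's hypothesis requires, and you obtain strictness from the non-empty set $\tau\setminus\rho$ rather than from the strict inclusion in the lemma's statement.
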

\begin{proof}
	Let $\tau\in\fibreonlytwo$. Then by definition, $\tau\in \FreeS$ and $\rho \subset \tau$. 
	Furthermore, we have $\tau\setminus \rho \subseteq \twosepedges(\tau)$, so by \cref{lem:2_sep_collapsed_reduced_size},
	\begin{equation*}
		p = |\twosepedges(\rho)|< |\twosepedges(\tau)|.
	\end{equation*}
	By \cref{lem:characterisation_upper_fibre}, this shows that $\tau\in \fibrealltwo$. 
\end{proof}

The next observation we make is that when passing from $\tau\in \fibrealltwo$ to $\rho$, we need to collapse at least one edge that lies in a 2-separator:

\begin{lemma}
	\label{lem:collapsed_things_inside_2_bond}
	Let $\tau \in \fibrealltwo$. Then $\twosepedges(\tau)\not\subseteq \rho$.
\end{lemma}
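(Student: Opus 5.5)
We argue by contrapositive, in the same way as the proof of \cref{lem:collapsed_things_outside_2_bond}. Suppose $\tau\supset\rho$ satisfies $\twosepedges(\tau)\subseteq\rho$. We will show that $|\twosepedges(\tau)|\le p$; by \cref{lem:characterisation_upper_fibre} this forces $\tau\notin\fibrealltwo$.

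First, by \cref{lem:characterisation_upper_fibre} we have $\rho\in\FreeS\setminus\Conebonds$. Since $\rho\subseteq\tau$, \cref{lem:rk_preserved_2_connected} gives $\tau\in\FreeS\setminus\Conebonds$ as well. So \cref{lem:collapsing_not_to_C_preserves_2_cut_edges} applies to the pair $\rho\subseteq\tau$ and yields
\begin{equation*}
	\twosepedges(\tau)\cap\rho\subseteq\twosepedges(\rho).
\end{equation*}
Under our assumption $\twosepedges(\tau)\subseteq\rho$, the left-hand side is all of $\twosepedges(\tau)$. Hence $\twosepedges(\tau)\subseteq\twosepedges(\rho)$, and therefore
\begin{equation*}
	|\twosepedges(\tau)|\le|\twosepedges(\rho)|=p,
\end{equation*}
using \cref{lem:characterisation_upper_fibre} once more for the last equality. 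This contradicts the description $|\twosepedges(\tau)|>p$ of the elements of $\fibrealltwo$. So every $\tau\in\fibrealltwo$ satisfies $\twosepedges(\tau)\not\subseteq\rho$.

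The only point that needs care is checking the hypotheses of \cref{lem:collapsing_not_to_C_preserves_2_cut_edges}, namely that both $\rho$ and $\tau$ lie in $\FreeS\setminus\Conebonds$. This is covered by \cref{lem:characterisation_upper_fibre} for $\rho$ and by upward closure (\cref{lem:rk_preserved_2_connected}) for $\tau$. Unlike the lower-fibre analogue, we do not need \cref{lem:2_sep_collapsed_reduced_size} here, because only the easy inclusion of \cref{lem:collapsing_not_to_C_preserves_2_cut_edges} is used. We expect this lemma to be used next to define a monotone retraction $\fibrealltwo\to\fibreonlytwo$, sending $\tau\mapsto\rho\cup\twosepedges(\tau)$, mirroring the proof of \cref{lem:upper_fibre_one_bonds}.
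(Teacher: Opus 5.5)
Your proof is correct and is the same as the paper's: assuming $\twosepedges(\tau)\subseteq\rho$, \cref{lem:collapsing_not_to_C_preserves_2_cut_edges} gives $|\twosepedges(\tau)|\le|\twosepedges(\rho)|=p$, contradicting \cref{lem:characterisation_upper_fibre}; your hypothesis check via \cref{lem:characterisation_upper_fibre} and \cref{lem:rk_preserved_2_connected} is also right. Your closing guess about how the lemma is used is off, though: the map $\tau\mapsto\rho\cup\twosepedges(\tau)$ need not be a poset map, because $\twosepedges$ can grow under collapse (cf.\ \cref{fig:new_2_sep_edge}), so the paper instead uses this lemma inside the relative filtration argument of \cref{lem:upper_fibre_simeq_fibreonlytwo}.
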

\begin{proof}
	If $\twosepedges(\tau)\subseteq \rho$, then by \cref{lem:collapsing_not_to_C_preserves_2_cut_edges}, we have $|\twosepedges(\tau)|\leq |\twosepedges(\rho)| = p$. By \cref{lem:characterisation_upper_fibre}, this implies that $\tau\not\in \fibrealltwo$.
\end{proof}

The proof that the upper interval is homotopy equivalent to $\fibreonlytwo$ is more involved than its analogue for the lower fibre and in fact uses the results about the lower fibre we obtained in \cref{sec:lower_fibre}.
\begin{lemma}
\label{lem:upper_fibre_simeq_fibreonlytwo}
We have $\fibrealltwo \simeq \fibreonlytwo$.
\end{lemma}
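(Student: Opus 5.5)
I would compare the two posets through the inclusion $\iota\colon \fibreonlytwo\hookrightarrow \fibrealltwo$ from \cref{lem:fibreonlytwo_subseteq_upper_fibre}, and show that it is a homotopy equivalence by Quillen's fibre theorem (\cref{thm:Quillen_fibre}). It suffices to show that for every $\tau\in\fibrealltwo$ the lower fibre
\begin{equation*}
	\iota^{-1}\big((\fibrealltwo)_{\subseteq\tau}\big)=\ls \mu \,\middle|\, \rho\subset\mu\subseteq\tau,\ \mu\setminus\rho\subseteq\twosepedges(\mu)\rs
\end{equation*}
is contractible. Then the join with the upper part is contractible for every $k$, so $\iota$ is $k$-connected for all $k$.

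Fix $\tau\in\fibrealltwo$. Since $\rho\subset\tau$ and $\rho\notin\Conebonds$, \cref{lem:rk_preserved_2_connected} gives $\tau\notin\Conebonds$; the same holds for every $\mu$ with $\rho\subseteq\mu\subseteq\tau$. I would take as candidate cone point
\begin{equation*}
	\mu_0\coloneqq\rho\cup\twosepedges(\tau).
\end{equation*}
It strictly contains $\rho$ by \cref{lem:collapsed_things_inside_2_bond}. It lies in the fibre because \cref{lem:collapsing_not_to_C_preserves_2_cut_edges}, applied to $\mu_0\subseteq\tau$, gives
\begin{equation*}
	\mu_0\setminus\rho\subseteq\twosepedges(\tau)\cap\mu_0\subseteq\twosepedges(\mu_0).
\end{equation*}

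The main step is to show that for every $\mu$ in the fibre, $\mu\cup\mu_0$ is again in the fibre. Then $\mu\le\mu\cup\mu_0\ge\mu_0$ is a zigzag of monotone poset maps, which gives a conical contraction onto $\mu_0$.
\begin{enumerate}
	\item \emph{Spheres in $\mu_0\setminus\rho$.} These lie in $\twosepedges(\mu\cup\mu_0)$ by \cref{lem:collapsing_not_to_C_preserves_2_cut_edges}, applied to $\mu\cup\mu_0\subseteq\tau$.
	\item \emph{Spheres $S\in\mu\setminus\rho$.} We know $S\in\twosepedges(\mu)$, and we add the spheres $T_1,\dots,T_k$ of $\twosepedges(\tau)\setminus\mu$ one at a time, setting $\mu_i=\mu\cup\ls T_1,\dots,T_i\rs$.
	\item \emph{The induction step.} Each $T_{i+1}$ lies in $\twosepedges(\mu_{i+1})$, again by \cref{lem:collapsing_not_to_C_preserves_2_cut_edges} from $\tau$. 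Moreover $\mu_{i+1}\in\FreeS\setminus\Conebonds$ and $\mu_{i+1}$ contains $S$. So \cref{lem:adding_in_2_sep}, applied with $\sigma=\mu_i$, $T=T_{i+1}$ and witness $\mu_{i+1}$ itself, gives $S\in\twosepedges(\mu_{i+1})$.
\end{enumerate}
By induction $S\in\twosepedges(\mu\cup\mu_0)$, which completes this step.

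The step I expect to be most delicate is this repeated application of \cref{lem:adding_in_2_sep}. One has to check carefully that its hypotheses are met, namely that the chosen witness lies outside $\Conebonds$ and contains $S$. One also has to make sure that the naive retraction $\tau\mapsto\rho\cup\twosepedges(\tau)$ is not needed. That map need not be monotone, since $\twosepedges$ can grow under collapses (\cref{fig:new_2_sep_edge}), which is exactly why I would argue fibrewise instead. If the fibrewise argument ran into trouble, my fallback would be to rewrite the fibre via the lower-fibre model $\fibrenontwo$ from \cref{sec:lower_fibre}, following the hint in the paper.
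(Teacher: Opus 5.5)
Your proof is correct, but it is organised differently from the paper's. The paper does not apply \cref{thm:Quillen_fibre} to the inclusion $\fibreonlytwo\hookrightarrow\fibrealltwo$ in one step. Instead it introduces a relative filtration $X_{r,s}$ of $\fibrealltwo$, indexed by $|\twosepedges(\sigma)\setminus\rho|$ and $|\sigma\setminus\rho|$, in imitation of the lower-fibre argument. For each new $\tau$ it then retracts the open fibre in $X_{r,s-1}$, via $\sigma\mapsto\sigma\cup\twosepedges(\tau)$, onto a poset with minimum $\rho\cup\twosepedges(\tau)$.

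Those fibres contain yours and also include elements of $\fibrealltwo$ outside $\fibreonlytwo$ with at least $r$ separator spheres outside $\rho$. So landing in them needs only \cref{lem:collapsing_not_to_C_preserves_2_cut_edges}. The extra work goes into excluding $\sigma\cup\twosepedges(\tau)=\tau$, and that is where \cref{lem:2_sep_collapsed_reduced_size} enters.

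Your fibres $(\fibreonlytwo)_{\subseteq\tau}$ are closed at the top, so $\mu\cup\mu_0=\tau$ causes no trouble. The price is that you must show $\mu\cup\mu_0\in\fibreonlytwo$. Your induction with \cref{lem:adding_in_2_sep} is exactly the proof of \cref{lem:2_sep_collapsed_reduced_size}. You could simply cite that lemma for $\mu\subseteq\mu\cup\mu_0$, once your first step gives $(\mu\cup\mu_0)\setminus\mu\subseteq\twosepedges(\mu\cup\mu_0)$.

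Both arguments therefore rest on the same two lemmas and the same distinguished element $\rho\cup\twosepedges(\tau)$. Yours avoids the filtration and its boundary bookkeeping. It also still works equivariantly, since $\mu_0$ is fixed by every element of $\Out(F_n)$ stabilising both $\rho$ and $\tau$. The paper's version keeps the proof parallel in form to the main filtration arguments.
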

\begin{proof}
	We will prove this result by considering it as a relative version of \cref{prop:lower_fibre} and mimicking the proof of that lemma in \cref{sec:lower_fibre}.

	We start with a relative version of \cref{def:Ctwobonds}:
	Let $X_{r,s}\subseteq \fibrealltwo$ be the subposet of all $\sigma\in \fibrealltwo$ such that one of the following holds:
	\begin{enumerate}
		\item $\sigma\in \fibreonlytwo$,
		\item $|\twosepedges(\sigma)\setminus \rho| > r$ or
		\item $|\twosepedges(\sigma)\setminus \rho| = r$ and $|\sigma\setminus \rho| \leq  r+s$.
	\end{enumerate}

	We have the following:
	\begin{enumerate}
		\item If $r>2n-3$, then $X_{r,s} = \fibreonlytwo$.
		\item If $s=0$, we have $X_{r,s} = X_{r+1,3n-4-r}$.
		\item If $s\geq 3n-3-|\rho|$, we have $X_{1,s} = \fibrealltwo$.
	\end{enumerate}
	The arguments are analogous to the ones in \cref{lem:filtration_of_Ctwobonds}.

	It follows that we have a filtration 
	\begin{multline*}
		\fibreonlytwo = X_{2n-2,\bullet} = X_{2n-3,0} \subseteq X_{2n-3,1}\subseteq \cdots \subseteq X_{1,3n-3-|\rho|} = \fibrealltwo.
	\end{multline*}
	We will show that for all $r,s$, the inclusion $X_{r,s-1} \hookrightarrow X_{r,s}$ is a homotopy equivalence. This will imply that $\fibreonlytwo \hookrightarrow  \fibrealltwo$ is a homotopy equivalence as well.

	Fix $r,s$, let $\iota\colon X_{r,s-1} \hookrightarrow X_{r,s}$ be the inclusion and  $\tau\in X_{r,s}$. We claim that the fibre $\iota_{\subset \tau}$ is contractible.
	If $\tau \in X_{r,s-1}$, this is clear because $\tau$ is the unique maximal element of $\iota_{\subset \tau}$. 
	So assume that $\tau\in X_{r,s}\setminus X_{r,s-1}$. This means that
	\begin{equation*}
		\tau\in \fibrealltwo \setminus \fibreonlytwo,\, |\twosepedges(\tau)\setminus \rho| = r \text{ and } |\tau\setminus \rho| =  r+s.
	\end{equation*}
	Combining \cref{lem:characterisation_upper_fibre} with an analogue of \cref{lem:characterisation_lower_fibre}, we get that $\sigma\in\iota_{\subset \tau}$ if and only if
	\begin{enumerate}
		\item $\rho\subset\sigma\subset \tau$,
		\item $|\twosepedges(\sigma)| > p$ and
		\item $\sigma \in \fibreonlytwo$ or  $|\twosepedges(\sigma)\setminus \rho|\geq r$.
	\end{enumerate}

	We claim that for all $\sigma\in \iota_{\subset \tau}\cup \{\rho\}$, we also have $\sigma'\coloneqq \sigma\cup \twosepedges(\tau)\in \iota_{\subset \tau}$.
	It is clear that $\rho \subseteq \sigma'\subseteq \tau$ and it follows from \cref{lem:collapsing_not_to_C_preserves_2_cut_edges} that $\twosepedges(\tau)\subseteq \twosepedges(\sigma')$.
	This firstly implies that $|\twosepedges(\sigma')|\geq |\twosepedges(\tau)|>p$ (for the last inequality, we use \cref{lem:characterisation_upper_fibre}) and secondly that $|\twosepedges(\sigma')\setminus \rho|\geq |\twosepedges(\tau)\setminus \rho| = r$.

	It remains to show that $\rho\neq\sigma'\neq \tau$.
	If $\rho\subset \sigma$, it is clear  that also $\rho \subset\sigma'$. If $\sigma= \rho$, then $\sigma'=\rho$ is equivalent to saying that $\twosepedges(\tau)\subseteq \rho$, which cannot be the case by \cref{lem:collapsed_things_inside_2_bond}.

	Now assume that $\sigma' = \tau$, i.e.~$\tau\setminus \sigma \subseteq \twosepedges(\tau)$. This leads to a contradiction, similarly to the proof of \cref{lem:collapsed_things_outside_2_bond}:
	By \cref{lem:2_sep_collapsed_reduced_size}, we get 
	$\twosepedges(\sigma)\subseteq\twosepedges(\tau)$.
	As $\sigma \neq \tau$, this is a proper inclusion, so we have $|\twosepedges(\sigma)\setminus \rho|<|\twosepedges(\tau)\setminus \rho| = r$.
	As $\sigma \in \iota_{\subset \tau}\cup \ls \rho \rs$, the third condition for being in $\iota_{\subset \tau}$ then implies that $\sigma\in \fibreonlytwo\cup \ls \rho\rs$, so $\sigma\setminus \rho\subseteq \twosepedges(\sigma)\subset \twosepedges(\tau)$.
	But as
	\begin{equation*}
		\tau \setminus \rho = (\tau\setminus \sigma) \cup (\sigma\setminus \rho),
	\end{equation*}
	we then also get $\tau\in \fibreonlytwo$. This is a contradiction and implies that $\sigma'\subset \tau$, so we indeed get $\sigma'\in \iota_{\subset \tau}$.

	It follows that the assignment $\sigma\mapsto \sigma'$ defines a monotone poset map $\iota_{\subset \tau} \to \iota_{\subset \tau}$ (this is the analogue of \cref{prop:lower_fibre_simeq_fibrenontwo}). 
	Its image is contractible because it has a unique minimal element, namely $\rho' \coloneqq \rho\cup\twosepedges(\tau)$ (this is the analogue of \cref{lem:fibrenontwo_contractible}).
	Hence, also $\iota_{\subset \tau}$ is contractible, which is what we wanted to show.
\end{proof}

By \cref{lem:upper_fibre_simeq_fibreonlytwo}, we can prove \cref{prop:upper_fibre} by determining the homotopy type of $\fibreonlytwo$. We will do this in the next section.

\section{Homotopy type of the upper-interval model $\fibreonlytwo$}

\label{sec:fibreonlytwo}
We work towards showing that for $p=0$, $\fibrealltwo\simeq\fibreonlytwo$ is a wedge of $(n-3)$-spheres if $\Gamma(\rho)$ is a $\theta$-graph, and is contractible otherwise.
Before specialising to $p=0$, we describe some structural properties of $\fibreonlytwo$ that do not depend on $p$.

\subsection{The complex $\fibreonlytwo$}
Recall from \cref{sec:link_sphere_system} that the poset $\FreeS_{\supset\rho}$ can be seen as a simplicial complex using the isomorphism to the link $\lk_{\FreeS}(\rho)$ of $\rho$ in the simplicial complex $\FreeS$.

\subsubsection{$\fibreonlytwo$ is a flag complex}

The upper interval $\fibrealltwo$ is contained in $\FreeS_{\supset\rho}$, but it is just a subposet and not a subcomplex, as it is not downwards closed.
An example is depicted in the top row of \cref{fig:not_full_subcomplex}: The sphere system $\tau = \rho \cup \ls S,T,R\rs$ is in $\fibrealltwo$ because
\begin{equation*}
	|\twosepedges(\tau)|=1>\twosepedges(\rho)=0
\end{equation*}
(see \cref{lem:characterisation_upper_fibre}), but the sphere system $\sigma = \rho \cup \ls T,R\rs\subset \tau$ is not because $\twosepedges(\sigma) = \emptyset$.

The next two lemmas show that, in contrast to the upper interval itself, we can see $\fibreonlytwo$ as a subcomplex of $\FreeS_{\supset\rho}$. It is not a full subcomplex: if every vertex of a simplex $\sigma\in\FreeS_{\supset\rho}$ is contained in $\fibreonlytwo$, this does not necessarily mean that $\sigma\in\fibreonlytwo$; an example is depicted in the bottom row of \cref{fig:not_full_subcomplex}. 
However, $\fibreonlytwo$ is a flag complex: if every \emph{edge} of a simplex $\sigma\in\FreeS_{\supset\rho}$ is contained in $\fibreonlytwo$, then $\sigma\in\fibreonlytwo$.

\begin{figure}
	\centering
	\includegraphics{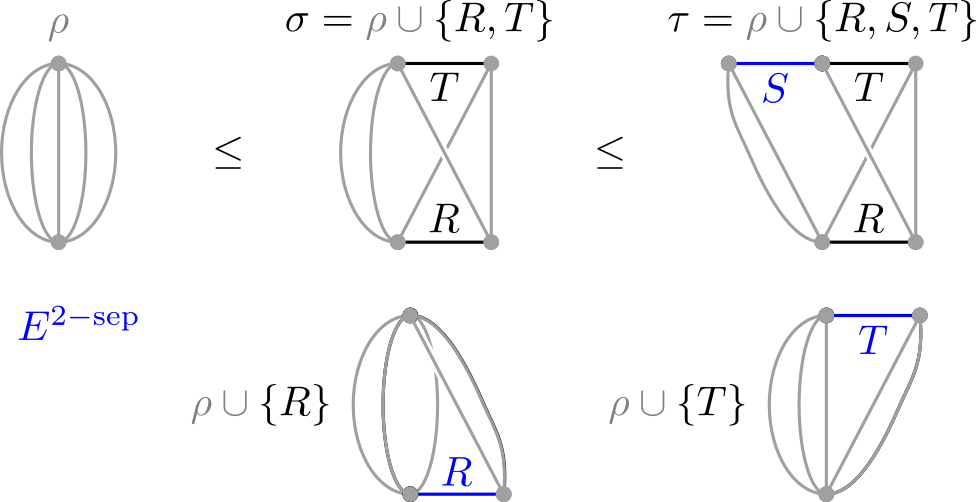}
	\caption{Top row: The dual graphs of two sphere systems $\sigma \subset\tau$ in $\FreeS_{\supset\rho}$, where $\rho$ is the $\theta$-graph in rank $n=5$. We have $\tau\in \fibreonlyone$, but $\sigma \notin \fibreonlytwo$. 
	Bottom row: The vertices $\rho \cup \ls R\rs, \rho\cup \ls T \rs$ of the edge $\sigma$ lie in $\fibreonlytwo$.}
	\label{fig:not_full_subcomplex}
\end{figure}

\begin{lemma}
	\label{lem:fibreonlytwo_downwards_closed}
	If $\tau\in \fibreonlytwo$ and $\rho\subset \sigma \subseteq \tau$, then $\sigma\in \fibreonlytwo$.
\end{lemma}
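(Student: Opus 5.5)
The plan is to mirror the downward-closedness argument from the proof of \cref{lem:upper_fibre_one_bonds}, with \cref{lem:collapsing_not_to_C_preserves_2_cut_edges} playing the role that \cref{lem:collapsing_not_to_partial_preserves_bridges} played there. Since only one inclusion for $\twosepedges$ survives in the 2-bond setting, the proof has to use exactly that inclusion.

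\textbf{Step 1: everything involved lies outside $\Conebonds$.} Let $\tau\in\fibreonlytwo$ and $\rho\subset\sigma\subseteq\tau$. We have $\rho\in\FreeS\setminus\Conebonds$ by \cref{lem:characterisation_upper_fibre}. Applying \cref{lem:rk_preserved_2_connected} to $\rho\subseteq\sigma$ and to $\rho\subseteq\tau$ gives $\sigma,\tau\in\FreeS\setminus\Conebonds$. This is exactly the hypothesis needed for \cref{lem:collapsing_not_to_C_preserves_2_cut_edges}. Also $\sigma\in\FreeS$, since $\FreeS$ is a simplicial complex and $\sigma$ is a face of $\tau$. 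Finally $\rho\subset\sigma$ holds by assumption.

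\textbf{Step 2: the defining condition.} It remains to show $\sigma\setminus\rho\subseteq\twosepedges(\sigma)$. We compute
\begin{equation*}
	\sigma\setminus\rho=(\tau\setminus\rho)\cap\sigma\subseteq\twosepedges(\tau)\cap\sigma\subseteq\twosepedges(\sigma).
\end{equation*}
The first inclusion uses $\tau\in\fibreonlytwo$. The second is \cref{lem:collapsing_not_to_C_preserves_2_cut_edges}, applied to $\sigma\subseteq\tau$, both of which lie in $\FreeS\setminus\Conebonds$ by Step 1.

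There is no serious obstacle. The only point needing care is that we use the correct direction of the $\twosepedges$ comparison. The reverse inclusion fails in general (see \cref{fig:new_2_sep_edge}), but it is not needed here, because we only pass from $\tau$ down to the smaller system $\sigma$. Membership in $\twosepedges$ is preserved under collapsing, since any witness $G'\geq\Gamma(\tau)$ is also a witness over $\Gamma(\sigma)$.
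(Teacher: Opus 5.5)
Your proof is correct and is essentially the paper's argument: the paper likewise notes that $\sigma\setminus\rho\subseteq\tau\setminus\rho\subseteq\twosepedges(\tau)$ and then applies \cref{lem:collapsing_not_to_C_preserves_2_cut_edges} to get $\sigma\setminus\rho\subseteq\twosepedges(\sigma)$. Your Step 1, which uses \cref{lem:characterisation_upper_fibre} and \cref{lem:rk_preserved_2_connected} to check that $\sigma,\tau\notin\Conebonds$, just makes explicit the hypothesis of that lemma, which the paper leaves implicit.
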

\begin{proof}
	As $\tau$ is contained in $\fibreonlytwo$, we have
	\begin{equation*}
		(\tau\setminus \sigma) \cup (\sigma\setminus \rho) = \tau \setminus \rho \subseteq \twosepedges(\tau).
	\end{equation*}
	So  $\sigma\setminus \rho\subseteq\twosepedges(\tau)$ and then by \cref{lem:collapsing_not_to_C_preserves_2_cut_edges}, we also have $\sigma\setminus \rho \subseteq \twosepedges(\sigma)$.
\end{proof}

\cref{lem:fibreonlytwo_downwards_closed} implies that $\fibreonlytwo$ is downwards closed in $\FreeS_{\supset\rho}$, so it is a subcomplex. \cref{lem:fibreonlytwo_is_flag} below shows that it is a flag complex.

\begin{lemma}
	\label{lem:fibreonlytwo_is_flag}
	Let $\rho\subset \sigma \in \FreeS$. Then $\sigma\in \fibreonlytwo$ if and only if for all $S,T\in \sigma\setminus \rho$, we have $\rho\cup \ls S,T \rs\in \fibreonlytwo$.
\end{lemma}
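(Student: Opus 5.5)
The forward direction is immediate from \cref{lem:fibreonlytwo_downwards_closed}. If $\sigma\in\fibreonlytwo$ and $S,T\in\sigma\setminus\rho$, then $\rho\subset\rho\cup\ls S,T\rs\subseteq\sigma$. Hence $\rho\cup\ls S,T\rs\in\fibreonlytwo$.

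For the converse, assume that $\rho\cup\ls S,T\rs\in\fibreonlytwo$ for all $S,T\in\sigma\setminus\rho$. This includes the case $S=T$, which gives $\rho\cup\ls S\rs\in\fibreonlytwo$ for every single $S$. By definition of $\fibreonlytwo$, the hypothesis says precisely that
\begin{equation*}
	\ls S,T\rs\subseteq\twosepedges(\rho\cup\ls S,T\rs)\quad\text{for all } S,T\in\sigma\setminus\rho .
\end{equation*}
This is exactly the hypothesis of \cref{lem:2_sep_pairwise} applied to $\rho\subset\sigma$. That lemma then yields $\sigma\setminus\rho\subseteq\twosepedges(\sigma)$, i.e.~$\sigma\in\fibreonlytwo$. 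The argument is thus essentially a citation of \cref{lem:2_sep_pairwise}.

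The main point to check is that the standing hypotheses of the earlier lemmas are met. \cref{lem:2_sep_pairwise} relies on \cref{lem:adding_in_2_sep} and \cref{lem:collapsing_not_to_C_preserves_2_cut_edges}, which require the sphere systems involved to lie in $\FreeS\setminus\Conebonds$. Here $\rho\notin\Conebonds$ by \cref{lem:characterisation_upper_fibre}, so by \cref{lem:rk_preserved_2_connected} every $\mu$ with $\rho\subseteq\mu\subseteq\sigma$ also lies in $\FreeS\setminus\Conebonds$.

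In the inductive step of \cref{lem:2_sep_pairwise}, \cref{lem:adding_in_2_sep} is applied with the witness sphere system $\rho\cup\ls S_j,S_k\rs$. This system contains $S_j$, lies in $\FreeS\setminus\Conebonds$, and satisfies $S_k\in\twosepedges(\rho\cup\ls S_j,S_k\rs)$ by hypothesis. So the hypotheses of \cref{lem:adding_in_2_sep} hold, and the induction goes through. I expect this verification to be the only step needing care; everything else is formal.
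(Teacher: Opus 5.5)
Your proposal is correct and matches the paper's proof: the forward direction is \cref{lem:fibreonlytwo_downwards_closed} and the converse is a direct application of \cref{lem:2_sep_pairwise}. Your extra check is a valid verification of hypotheses the paper leaves implicit. Namely, you confirm that every intermediate system lies in $\FreeS\setminus\Conebonds$ (via $\rho\notin\Conebonds$ and \cref{lem:rk_preserved_2_connected}), and that \cref{lem:adding_in_2_sep} applies in the inductive step.
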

\begin{proof}
	The ``only if'' direction is \cref{lem:fibreonlytwo_downwards_closed}. 
	For the opposite direction, assume that for all $S,T\in \sigma\setminus \rho$, we have $\rho\cup \ls S,T \rs\in \fibreonlytwo$. We need to show that $\sigma\in \fibreonlytwo$, i.e.~ that $\sigma\setminus \rho \subseteq \twosepedges(\sigma)$. This is \cref{lem:2_sep_pairwise}.
\end{proof}

From now on, we will identify $\fibreonlytwo$ with this flag subcomplex of $\lk_{\FreeS}(\rho)$. That is, we consider it as a simplicial complex, where the vertices are isotopy classes of spheres $S\not \in \rho$ such that $\rho \cup \ls S \rs \in \FreeS$ and $S\in \twosepedges(\rho \cup \ls S \rs)$.
A collection of such spheres forms a simplex if and only if for all $S,T$ in the collection, we have $\rho\cup \ls S,T \rs\in \FreeS$ and $ \ls S,T \rs\subseteq \twosepedges(\rho\cup \ls S,T \rs)$.

\subsubsection{A join decomposition}

In what follows, we set
\begin{equation*}
	G\coloneqq \Gamma(\rho) \text{ and write } G_S\coloneqq \Gamma(\rho\cup \ls S \rs),\, G_{ST}\coloneqq \Gamma(\rho\cup \ls S, T \rs)
\end{equation*}
if $\rho\cup \ls S\rs, \, \rho\cup \ls S, T \rs\in \FreeS$.

Recall from \cref{sec:link_sphere_system} that the link $\lk_{\FreeS}(\rho)$ decomposes as a join 
\begin{equation*}
	\lk_{\FreeS}(\rho) = \ast_{v\in V(G)} \FreeS_v,
\end{equation*}
where $\FreeS_v$ is the subcomplex of $\lk_{\FreeS}(\rho)$ consisting of all spheres $S$ that are contained in the connected component $M_v$ of $M-\rho$ corresponding to the vertex $v\in V(G)$.
Let
\begin{equation*}
	\twosepposet_v\coloneqq \FreeS_v\cap \fibreonlytwo
\end{equation*}
be the subcomplex of all sphere systems $\sigma_v$ in $M_v$ that are contained in $\fibreonlytwo$.
Given any $\sigma \in \fibreonlytwo$, we can uniquely decompose it as $\sigma = \bigsqcup_{v\in V(G)}\sigma_v$, where  $\sigma_v\in \twosepposet_v$. 
However, it is not true that $\fibreonlytwo$ also decomposes as a join over all the $\twosepposet_v$. The reason is that it is not a full subcomplex of $\FreeS_{\supset\rho}$.
Before we continue, we note that in contrast to that, the subcomplexes $\twosepposet_v$ are full.

\begin{lemma}
	\label{lem:P_v_full_subcomplex}
	$\twosepposet_v$ is a full subcomplex of $\FreeS_v$.
\end{lemma}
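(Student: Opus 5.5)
We need to show: if $\sigma_v$ is a simplex of $\FreeS_v$ whose vertices all lie in $\twosepposet_v$, then $\sigma_v\in\twosepposet_v$. The plan is to reduce this to the flag property of \cref{lem:fibreonlytwo_is_flag}. Since $\twosepposet_v = \FreeS_v\cap\fibreonlytwo$ and $\fibreonlytwo$ is a flag complex, it is enough to treat edges. Concretely, we show: if $S,T$ are spheres in $M_v$ with $\rho\cup\{S,T\}\in\FreeS$ and $S\in\twosepedges(\rho\cup\{S\})$, $T\in\twosepedges(\rho\cup\{T\})$, then $\{S,T\}\subseteq\twosepedges(\rho\cup\{S,T\})$. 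Once this holds, every edge of $\sigma_v$ lies in $\fibreonlytwo$, so $\sigma_v$ does too by \cref{lem:fibreonlytwo_is_flag}.

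For the edge statement, the natural tool is \cref{lem:adding_in_2_sep}. We apply it with $\sigma:=\rho\cup\{S\}$, which lies in $\FreeS\setminus\Conebonds$ by \cref{lem:rk_preserved_2_connected}, since $\rho\notin\Conebonds$. The sphere $S$ lies in $\twosepedges(\sigma)$. For the auxiliary system we take $\rho\cup\{T\}$; however, the lemma requires that system to contain $S$, and it does not. So the intermediate system must instead be $\rho\cup\{S,T\}$ itself, and the hypothesis we would need is $T\in\twosepedges(\rho\cup\{S,T\})$, which is exactly what we are trying to prove. The lemma therefore does not apply directly, and we must use the fact that $S$ and $T$ live at the same vertex $v$.

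The approach is to work with the graphs directly. In $G_{ST}$, the vertex $v$ is blown up into three vertices $v_0,v_1,v_2$, joined by edges $e_S$ and $e_T$ forming a path; say $e_S=v_1v_0$ and $e_T=v_0v_2$, using the partition description of \cref{sec:link_sphere_system}. The argument then runs as follows.

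\begin{enumerate}
\item By \cref{lem:equivalent_characterisations_of_E2sep}, there is a vertex $u$ of $G_S$ such that $|G_S|-\{u,e_S\}$ is disconnected. Note that $u\neq v$, because the endpoints of $e_S$ are the blown-up parts of $v$.
\item Collapsing $e_T$ in $G_{ST}$ yields $G_S$. Since $u$ is a vertex of $G$ distinct from $v$, it survives in $G_{ST}$.
\item Expanding $e_T$ inside a component of $|G_S|-\{u,e_S\}$ does not reconnect the two components, so $|G_{ST}|-\{u,e_S\}$ is still disconnected. This is the same ``$v\neq v_f$'' argument as in the proof of \cref{lem:adding_in_2_sep}.
\item Applying \cref{lem:equivalent_characterisations_of_E2sep} again gives $e_S\in\twosepedges(G_{ST})$. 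We need $G_{ST}\notin\Conebonds$ to invoke it, and this holds by \cref{lem:rk_preserved_2_connected}.
\item The same argument with the roles of $S$ and $T$ exchanged gives $e_T\in\twosepedges(G_{ST})$.
\end{enumerate}

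The main obstacle is step~3: checking that the witness vertex $u$ is never the collapsed vertex. In the general setting of \cref{lem:adding_in_2_sep} this was the hard case $v=v_f$. Here it is excluded, because a witness $u$ equal to an endpoint of $e_S$ would be a cut vertex of $G_S\notin\Conebonds$, contradicting \cref{lem:Conebond_via_cut_vertices}. Moreover, the vertex $v_f$ obtained by collapsing $e_T$ in $G_{ST}$ is an endpoint of $e_S$ (either $v_0$, or after collapse the merged vertex), so $u\neq v_f$.

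I would write this identification carefully, including the case where $e_T$ is attached at $v_1$ rather than $v_0$. Then the two components of $|G_S|-\{u,e_S\}$ lift to $|G_{ST}|-\{u,e_S\}$ with the open edge $e_T$ added to one of them, so they stay disconnected.
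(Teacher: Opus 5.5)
Your proof is correct and follows the paper's argument: you reduce to edges via \cref{lem:fibreonlytwo_is_flag}, take the witness vertex $u$ for $e_S$ in $G_S$ from \cref{lem:equivalent_characterisations_of_E2sep}, note that $u$ is not an endpoint of $e_S$ (so not the vertex $e_T$ collapses to) and hence $|G_{ST}|-\ls u,e_S\rs$ stays disconnected, and then swap the roles of $S$ and $T$. Your detour on why \cref{lem:adding_in_2_sep} does not apply directly is unnecessary but harmless, and your explanation of why $u\neq v_f$ spells out what the paper calls ``easy to see''.
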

\begin{proof}
	By \cref{lem:fibreonlytwo_is_flag}, it suffices to show that if $S,T\in \twosepposet_v$ form an edge in $\FreeS_v$, then they also form an edge in $\twosepposet_v$. This means that $S,T\in \twosepedges(\rho \cup \ls S,T \rs)$.
	By assumption, we have $S \in \twosepedges(\rho \cup \ls S \rs)$. Hence by \cref{lem:equivalent_characterisations_of_E2sep}, there is a vertex $w$ such that $G_S- \ls e_S, w\rs$ is disconnected.
	As $S$ and $T$ both lie in $\FreeS_v$, it is easy to see that $G_{ST}- \ls e_S, w\rs$ is disconnected as well, see \cref{fig:vw_vu_compatible}.
	\begin{figure}
		\centering
		\includegraphics{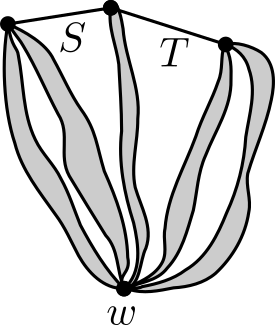}
		\caption{If $S,T$ form an edge in $\FreeS_v$, then they also form an edge in $\twosepposet_v$.}
		\label{fig:vw_vu_compatible}
	\end{figure}
	Hence, we have $S\in \twosepedges(\rho \cup \ls S,T \rs)$.
	Exchanging the roles of $S$ and $T$, we also get that $T\in \twosepedges(\rho \cup \ls S,T \rs)$.
\end{proof}

We next describe subcomplexes of $\twosepposet_v$ that will be useful for understanding when vertices in $\fibreonlytwo$ form a simplex.
For $\ls v,w\rs \subseteq V(G)$, define 
\begin{equation*}
	\twosepposet_{vw} \coloneqq \ls \ls S_0, \ldots, S_k\rs \in \twosepposet_v \mid |G_{S_i}|- \ls e_{S_i},w \rs \text{ is disconnected for all } i \rs
\end{equation*}
to be the full subcomplex of $\twosepposet_v$ on those spheres $S\in \twosepposet_v$ such that $|G_S|- \ls e_S,w \rs$ is disconnected, where $e_S\in E(G_S)$ is the edge corresponding to $S$.

\begin{lemma}
	\label{lem:Pv_union_of_Pvw}
	Every $S\in \twosepposet_v$ is contained in $\twosepposet_{vw}$ for some 2-vertex cut $\ls v,w \rs\subseteq V(G)$.
\end{lemma}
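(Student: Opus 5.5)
Let $S\in \twosepposet_v$, so $\rho\cup\ls S\rs\in\FreeS$, the sphere $S$ lies in the component $M_v$, and $S\in \twosepedges(\rho\cup\ls S\rs)$. The plan is to take a witness vertex for $S$ in $G_S$ from \cref{lem:equivalent_characterisations_of_E2sep}, push it down to $G$, and show that together with $v$ it forms a 2-vertex cut of $G$.

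\textbf{Step 1: the witness vertex.} Since $\rho\notin\Conebonds$, \cref{lem:rk_preserved_2_connected} gives $\rho\cup\ls S\rs\in\FreeS\setminus\Conebonds$. Hence \cref{lem:equivalent_characterisations_of_E2sep} applies. It yields a vertex $u\in V(G_S)$ such that $|G_S|-\ls u,e_S\rs$ has exactly two components $K_1,K_2$, and each $K_i$ contains an endpoint $v_i$ of $e_S$ as a proper subset. As in the proof of that lemma, $u$ is not an endpoint of $e_S$, since otherwise it would be a cut vertex of $G_S$.

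\textbf{Step 2: descend to $G$.} The graph $G$ is $G_S/e_S$, and $e_S$ collapses to $v$. Because $u\neq v_1,v_2$, the vertex $u$ survives as a vertex $w\neq v$ of $G$, and $|G_S|-\ls u,e_S\rs$ with the two endpoints also removed is homeomorphic to $|G|-\ls v,w\rs$. Removing $v_1,v_2$ from $K_1,K_2$ leaves the nonempty sets $K_i\setminus\ls v_i\rs$, by properness. These sets lie in different components of $|G|-\ls v,w\rs$, since any connecting path would give a path from $K_1$ to $K_2$ in $|G_S|-\ls u,e_S\rs$. So $|G|-\ls v,w\rs$ is disconnected.

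This still has to be compared with the definition of a 2-vertex cut, which also allows removing an edge between $v$ and $w$. If there is a multiedge between $v$ and $w$, the pair is a 2-vertex cut by definition. Otherwise, removing the single edge between them as well keeps the space disconnected: an open edge deleted from a disconnected space cannot merge its components. Either way $\ls v,w\rs$ is a 2-vertex cut. Moreover $|G_S|-\ls e_S,w\rs$ is disconnected, since $w$ is the image of $u$, so $S\in\twosepposet_{vw}$.

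\textbf{Main obstacle.} The one point needing care is that $K_i\setminus\ls v_i\rs$ is nonempty and that no open edge at $v_i$ leaks between the sides. This is exactly the "proper subset" clause of \cref{lem:equivalent_characterisations_of_E2sep}, which I will invoke directly.
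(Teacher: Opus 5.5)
Your main line is the paper's proof. You take the witness vertex $u$ for $e_S$ from \cref{lem:equivalent_characterisations_of_E2sep} and note that it is not an endpoint of $e_S$. You then use $|G_S|-\{u,\overline{e_S}\}\cong|G|-\{v,w\}$ together with the ``proper subset'' clause to conclude that $|G|-\{v,w\}$ is disconnected. Membership $S\in\twosepposet_{vw}$ is then just the definition. The paper stops at that point.

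\textbf{The flawed step.} The extra step you add, to match the literal definition of a 2-vertex cut in the case of a single edge between $v$ and $w$, is justified incorrectly. Deleting an open edge cannot merge components, but it can delete a whole component. If $e$ is the unique edge between $v$ and $w$, then the open edge $e$ is itself a connected component of $|G|-\{v,w\}$. So if $|G|-\{v,w\}$ consisted only of $e$ and one other component, $|G|-\{v,w,e\}$ would be connected. Your argument shows that $K_1\setminus\{v_1\}$ and $K_2\setminus\{v_2\}$ are nonempty, but not that the side containing $e$ contains anything besides $e$.

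\textbf{The fix, by valence.} In $G_S$, the edge $e$ joins $u$ to one endpoint of $e_S$, say $v_1$. Hence $e\subseteq K_1\setminus\{v_1\}$. Since $G_S\in\FreeS\setminus\Conebonds$ has no loops and $v_1$ has valence at least three, there is an edge $e'\neq e_S,e$ at $v_1$. The open edge $e'$ together with $v_1$ is connected and avoids $u$ and $e_S$, so $e'\subseteq K_1\setminus\{v_1\}$. It is an edge of $G$ different from $e$, so it survives the removal of $e$. Meanwhile $K_2\setminus\{v_2\}$ is nonempty and disjoint from $e$. Therefore $|G|-\{v,w,e\}$ is still disconnected, and $\{v,w\}$ is a 2-vertex cut.

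With this valence argument in place of your one-line justification, the proof is complete.
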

\begin{proof}
	Let $e_S\in \twosepedges(G_S)$ be the corresponding edge in $G_S$. By \cref{lem:equivalent_characterisations_of_E2sep}, there is a vertex $w$ in $G_S$ that is not an endpoint of $e_S$ such that $|G_S|- \ls e_S,w \rs$ has two connected components that each consist of more than one vertex. This implies that also $|G_S|- \ls \overbar{e_S},w \rs$ is disconnected.
	As $w$ is not an endpoint of $e_S$, we can also regard it as a vertex of $G$ and we have a homeomorphism $|G_S|- \ls \overbar{e_S},w \rs \cong |G|- \ls v,w \rs$.
\end{proof}

\subsection{Partition models and compatibility}
\label{sec:partitions}
We want to describe $\twosepposet_{v}$ in terms of the subcomplexes $\twosepposet_{vw}$. In order to do so, we use the combinatorial description of $\FreeS_v$ from \cref{sec:link_sphere_system} that describes $\FreeS_v$ in terms of partitions of $E(v)$, the set of half-edges adjacent to $v$.\footnote{Note that here, we consider only graphs in $\FreeS\setminus \Conebonds$, so there are no loops and we can consider $E(v)$ as a subset of $E(G)$.}

\subsubsection{Structure of a single $\twosepposet_{vw}$}
We now describe the subcomplexes $\twosepposet_{vw}$ in terms of these partitions:
Let $\ls v,w \rs\subseteq V(G)$ be a 2-vertex cut.
A \emph{$vw$-component} of $G$ is a maximal subset of $E(v)$ consisting of edges that all lie in the same connected component of $|G|- \ls v,w \rs$.
The set of all $vw$-components forms a partition of $E(v)$. As $\ls v,w \rs$ is a 2-vertex cut, this partition has at least two elements.

\begin{lemma}
	\label{lem:P_vw_via_partitions}
	Let $S\in \FreeS_{v}$.
	We have $S\in \twosepposet_{vw}\subseteq \twosepposet_{v}$ if and only if each side of the induced partition $P_S$ of $E(v)$ is a non-empty union of $vw$-components.
\end{lemma}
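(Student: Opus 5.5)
The plan is to translate both sides of the equivalence into statements about the graph $G_S$ obtained by blowing up $v$ into the edge $e_S$. Write $P_S=\{A,B\}$ for the induced partition of $E(v)$, with $|A|,|B|\geq 2$. In $G_S$, the vertex $v$ is replaced by two vertices $v_A,v_B$ joined by $e_S$; the edges of $A$ are attached at $v_A$ and those of $B$ at $v_B$. The key observation is that
\begin{equation*}
|G_S|-\ls e_S,w\rs
\end{equation*}
is obtained from $|G|-\ls v,w\rs$ as follows. Each connected component of $|G|-\ls v,w\rs$ is adjacent to $v$ through some $vw$-components. Since these are subsets of $E(v)$, every component of $|G|-\ls v,w\rs$ containing an edge of $E(v)$ corresponds to exactly one $vw$-component. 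We then re-attach these components at $v_A$ or $v_B$ according to which side their half-edges at $v$ lie on. Components of $|G|-\ls v,w\rs$ not adjacent to $v$ cannot exist, because otherwise $w$ would be a cut vertex of $G$, and $G\notin\Conebonds$ by \cref{lem:Conebond_via_cut_vertices}.

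For the ``if'' direction, assume both $A$ and $B$ are non-empty unions of $vw$-components. Then every component of $|G|-\ls v,w\rs$ gets glued entirely to $v_A$ or entirely to $v_B$. Hence $v_A$ and $v_B$ lie in different components of $|G_S|-\ls e_S,w\rs$, so this space is disconnected. Note that $w\neq v$, and $w$ is not an endpoint of $e_S$. By \cref{lem:equivalent_characterisations_of_E2sep}, this gives $S\in\twosepedges(\rho\cup\ls S\rs)$, i.e.\ $S\in\twosepposet_v$, and the disconnectedness is exactly the defining condition of $\twosepposet_{vw}$.

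For the ``only if'' direction, assume $|G_S|-\ls e_S,w\rs$ is disconnected. Suppose some $vw$-component $K$ meets both $A$ and $B$. Then the corresponding component of $|G|-\ls v,w\rs$ is glued to both $v_A$ and $v_B$ and connects them. Every other component is glued to $v_A$ or $v_B$, so the whole space $|G_S|-\ls e_S,w\rs$ would be connected, a contradiction. Therefore each $vw$-component lies entirely in $A$ or entirely in $B$. Both sides are non-empty by the definition of a partition, so each is a non-empty union of $vw$-components.

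The main obstacle is the bookkeeping in the key observation. One has to make precise the homeomorphism between $|G_S|-\ls \overline{e_S},w\rs$ and $|G|-\ls v,w\rs$, and to check that connectivity of $|G_S|-\ls e_S,w\rs$ is governed only by how the components attach to $v_A$ and $v_B$. This is the same identification already used in the proof of \cref{lem:Pv_union_of_Pvw}. One must also handle the degenerate case of multi-edges between $v$ and $w$: each such edge is its own component of $|G|-\ls v,w\rs$, hence its own $vw$-component, and it is treated like any other component.
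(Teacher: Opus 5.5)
Your proposal is correct and follows essentially the same route as the paper. The ``if'' direction is read off from the identification $|G_S|-\ls \overbar{e_S},w\rs\cong|G|-\ls v,w\rs$. The ``only if'' direction shows that a $vw$-component straddling both sides of $P_S$ connects the endpoints of $e_S$, and then uses the absence of cut vertices: the paper uses that $|G_S|-\ls w\rs$ is connected, while you use that every component of $|G|-\ls v,w\rs$ meets $v$. You also check explicitly, via \cref{lem:equivalent_characterisations_of_E2sep}, that disconnectedness already forces $S\in\twosepposet_v$; this needs $\rho\cup\ls S\rs\notin\Conebonds$, which follows from \cref{lem:rk_preserved_2_connected}. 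The paper leaves that detail implicit in its ``by definition''.
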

\begin{proof}
	Let $e_S\in E(G_S)$ be the edge corresponding to $S$. 
	By definition, we have $S\in \twosepposet_{vw}$ if and only if $|G_S|- \ls e_S, w \rs$ is disconnected.
	If each side of the induced partition $P_S$ is a non-empty union of $vw$-components, this is certainly the case.

	For the converse, assume that there are $e_1,e_2$ that lie in the same $vw$-component but on distinct sides of $P_S$. Then the component of $G- \ls v,w \rs$ that contains $e_1$ and $e_2$ gives a path between the endpoints of $e_S$ in $G_S- \ls w \rs$. As $G_S\in \FreeS\setminus \Conebonds$, it has no cut vertex, so $G_S- \ls w \rs$ is connected. But then also $G_S- \ls e_S, w \rs$ must be connected, which shows that $S\not\in \twosepposet_{vw}$.
\end{proof}

We summarise the description of $\twosepposet_{vw}$ in terms of partitions as follows: 

\begin{corollary}
	\label{cor:partition_description_P_vw}
	Let $\ls v,w \rs\subseteq V(G)$ be a 2-vertex cut. Write the set of $vw$-components as $X = \ls c_1,\ldots ,c_r, b_1,\ldots, b_s \rs$, where each $c_i\subseteq E(v)$ contains at least two edges and each $b_j \subseteq E(v)$ consists of a single edge.
	Then $\twosepposet_{vw}$ is isomorphic to the simplicial complex that has as vertices the two-element partitions $P$ of $X$ such that no side of $P$ is empty or consists of a single $b_j$, and where a collection of such partitions forms a simplex if and only if they are pairwise compatible.
\end{corollary}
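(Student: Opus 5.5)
We deduce the corollary from \cref{lem:P_vw_via_partitions} together with the partition model of $\FreeS_v$ recalled in \cref{sec:link_sphere_system} and the fullness statement \cref{lem:P_v_full_subcomplex}. The plan has three steps.

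\textbf{Vertices.} By the partition model, spheres $S\in\FreeS_v$ correspond bijectively to two-sided partitions $P_S$ of $E(v)$ in which each side has at least two elements. By \cref{lem:P_vw_via_partitions}, $S\in\twosepposet_{vw}$ exactly when each side of $P_S$ is a non-empty union of $vw$-components. Such partitions of $E(v)$ correspond bijectively to two-element partitions $P$ of $X$ with both sides non-empty: send $P$ to the partition of $E(v)$ obtained by taking unions of the components on each side. The side of $P_S$ corresponding to a side $A\subseteq X$ has size $\sum_{x\in A}|x|$. This is at least two unless $A$ is a single singleton component $b_j$, since any $c_i$ already has at least two edges and two or more components have at least two edges. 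Hence the size condition from the partition model translates exactly into the condition that no side of $P$ is empty or a single $b_j$. This gives the bijection on vertices.

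\textbf{Simplices.} By \cref{lem:P_v_full_subcomplex}, $\twosepposet_v$ is full in $\FreeS_v$. By definition, $\twosepposet_{vw}$ is full in $\twosepposet_v$, so it is full in $\FreeS_v$. Since $\FreeS_v$ is the flag complex of pairwise compatibility of partitions, a set of vertices of $\twosepposet_{vw}$ spans a simplex if and only if the corresponding partitions of $E(v)$ are pairwise compatible.

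\textbf{Compatibility.} It remains to check that compatibility is preserved by the bijection. Two partitions of $E(v)$ that are unions of $vw$-components have a side of one contained in a side of the other if and only if the same containment holds for the corresponding subsets of $X$, because the map $A\mapsto\bigcup A$ from subsets of $X$ to subsets of $E(v)$ is injective and preserves and reflects inclusion. The $vw$-components are disjoint and non-empty, so this holds.

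All three steps are routine translations. The only point needing care is the size bookkeeping in the vertex step, namely why exactly the sides consisting of a single $b_j$ are excluded while a single $c_i$ is allowed.
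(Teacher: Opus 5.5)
Your proposal is correct and follows the paper's proof, which deduces the corollary in one line from the partition model of $\FreeS_v$ in \cref{sec:link_sphere_system}, \cref{lem:P_v_full_subcomplex} and \cref{lem:P_vw_via_partitions}. You use the same three ingredients and additionally spell out the details the paper leaves implicit: the size check showing that a side fails the ``at least two half-edges'' condition exactly when it is empty or a single $b_j$, and the fact that taking unions of components preserves and reflects inclusion, so compatibility carries over.
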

\begin{proof}
	This follows from the description of $\FreeS_v$ in \cref{sec:link_sphere_system}, together with \cref{lem:P_v_full_subcomplex} and \cref{lem:P_vw_via_partitions}.
\end{proof}

\begin{lemma}
	\label{lem:cone_point_P_vw}
	Let $\ls v,w \rs\subseteq V(G)$ be a 2-vertex cut such that $\twosepposet_{vw}\neq \emptyset$ and such that there is at least one $vw$-component with at least two edges.
	Then $\twosepposet_{vw}$ has a cone point.
\end{lemma}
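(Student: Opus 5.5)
Use the partition model of \cref{cor:partition_description_P_vw}: vertices of $\twosepposet_{vw}$ are two-sided partitions $P$ of the set $X = \ls c_1,\ldots,c_r,b_1,\ldots,b_s\rs$ of $vw$-components, with neither side empty nor a single $b_j$, and a set of vertices spans a simplex exactly when the partitions are pairwise compatible. It therefore suffices to find one vertex $P_0$ compatible with every vertex of $\twosepposet_{vw}$. The hypothesis gives a component with at least two edges, say $c_1$. The natural candidate is
\begin{equation*}
	P_0 = \ls \ls c_1\rs,\, X\setminus\ls c_1\rs \rs .
\end{equation*}

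The first step is to check that $P_0$ is a vertex. The side $\ls c_1\rs$ is non-empty and is not a single $b_j$. The other side is non-empty because $\ls v,w\rs$ is a 2-vertex cut, so $|X|\geq 2$. The other side could be a single $b_j$, which happens when $X=\ls c_1,b_1\rs$. In that case every two-element partition of $X$ has a singleton $\ls b_1\rs$ as one side, so $\twosepposet_{vw}=\emptyset$, contradicting the hypothesis. Hence $P_0\in\twosepposet_{vw}$.

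The second step is compatibility. Let $P=\ls A, X\setminus A\rs$ be any vertex and choose the labelling so that $c_1\in A$. Then $\ls c_1\rs\subseteq A$, so one side of $P_0$ lies in one side of $P$, which is the compatibility relation from \cref{sec:link_sphere_system}. (If $P=P_0$ the statement is trivial.)

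Compatibility of the partitions of $X$ must also translate into compatibility of the induced partitions of $E(v)$ used in the flag description of $\FreeS_v$. This holds because each side of an induced partition of $E(v)$ is a union of $vw$-components, and containment of sides is preserved under taking these unions. Since $\twosepposet_{vw}$ is a full subcomplex of $\FreeS_v$ (it is full in $\twosepposet_v$, which is full by \cref{lem:P_v_full_subcomplex}) and $\FreeS_v$ is flag, any simplex $\sigma$ of $\twosepposet_{vw}$ together with $P_0$ spans a simplex. So $P_0$ is a cone point. The only place needing care is the degenerate case in the first step, where the hypothesis $\twosepposet_{vw}\neq\emptyset$ is exactly what excludes $X=\ls c_1,b_1\rs$.
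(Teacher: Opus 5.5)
Your proof is correct and follows the paper's argument. The paper also uses the partition with $c_1$ alone on one side, checks that it is a vertex from $\twosepposet_{vw}\neq\emptyset$ (by listing the cases $r\geq 2$, $s\geq 4$, or $r\geq 1$ and $s\geq 2$, which amounts to your exclusion of $X=\ls c_1,b_1\rs$), and observes that it is compatible with every other admissible partition. Your extra remarks on transferring compatibility to $E(v)$ and on flagness are already contained in \cref{cor:partition_description_P_vw}, so they are harmless but not needed.
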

\begin{proof}
	We use the partition description of $\twosepposet_{vw}$ from \cref{cor:partition_description_P_vw}.
	Let $P$ be the partition of $X = \ls c_1,\ldots ,c_r, b_1,\ldots, b_s \rs$ that has $c_1$ on one side and all the other $vw$-components on the other side (by assumption, we have $r\geq 1$).
	The condition that $\twosepposet_{vw}\neq \emptyset$ means that one of the following is true:
	\begin{itemize}
		\item $r\geq 2$,
		\item $s\geq 4$ or
		\item $r\geq 1$ and $s\geq 2$.
	\end{itemize}
	This implies that $P$ is indeed a vertex of $\twosepposet_{vw}$, as no side of $P$ is empty or consists of a single $b_j$.
	Furthermore, $P$ is clearly compatible with all other two-element partitions of $X$ with non-empty sides, so it forms a cone point of $\twosepposet_{vw}$ (cf.~\cite[Section 2]{Conant2005}).
\end{proof}

\cref{lem:cone_point_P_vw} gives us a cone point for a single $\twosepposet_{vw}$. We want to show that this is in fact a cone point for all of $\fibreonlytwo$. To do so, we need to understand better when vertices from different $\twosepposet_{vw}$ form a simplex. This is what we do next.

\subsubsection{Compatibility between different $\twosepposet_{vw}$ at a single $\twosepposet_v$}

Using the partition description from \cref{cor:partition_description_P_vw}, we can show that vertices of $\twosepposet_v$ that lie in distinct $\twosepposet_{vw}$ always form a simplex. The graph-theoretic reason for this is as follows.

\begin{lemma}
	\label{lem:vw_vu_compatible}
	Let $\ls v,w \rs$ and $\ls v,u \rs$ be 2-vertex cuts in $G$ with $w\neq u$. Then there is a $vw$-component $K\subset E(v)$ such that $E(v)\setminus K$ is contained in a $vu$-component.
\end{lemma}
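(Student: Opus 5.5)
Let $K$ be the $vw$-component containing the edges of $E(v)$ that lie in the same connected component of $|G|-\ls v,w\rs$ as $u$. Since $u\neq v,w$, the vertex $u$ lies in exactly one connected component $L$ of $|G|-\ls v,w\rs$. The component $L$ is adjacent to $v$: otherwise $w$ would be a cut vertex of $G$, which is impossible because $G\in\FreeS\setminus\Conebonds$ by \cref{lem:Conebond_via_cut_vertices}. So $K\coloneqq E(v)\cap L$ is a non-empty $vw$-component. (Edges from $v$ to $w$ form singleton $vw$-components; such an edge is not in $L$ and causes no trouble below.)

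We then show that all edges of $E(v)\setminus K$ lie in one $vu$-component. Each such edge $e$ lies either in a component $L'\neq L$ of $|G|-\ls v,w\rs$ or is a $v$--$w$ edge. In both cases there is a path starting with $e$ and ending at $w$ that avoids $u$ and does not return to $v$. For $L'\neq L$, use that $L'$ is connected, contains no $u$, and is adjacent to $w$; otherwise $v$ would be a cut vertex of $G$, again contradicting \cref{lem:Conebond_via_cut_vertices}. Hence all edges of $E(v)\setminus K$ lie in the same connected component of $|G|-\ls v,u\rs$ as $w$, and so they are contained in a single $vu$-component.

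One subtle point needs a check: $w$ itself is not removed in $|G|-\ls v,u\rs$, and all these paths pass through $w$ while avoiding $u$ and $v$. This is fine because $u\neq w$. Also, if $E(v)\setminus K$ is empty, the statement is vacuous.

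The main obstacle is making sure the paths avoid $u$. This is where we use that $u\in L$, so $u$ lies in none of the other components $L'$. The other thing to verify is that each $L'$ touches $w$, which uses 2-vertex connectivity.
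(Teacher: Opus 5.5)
Your proposal is correct and follows essentially the same route as the paper. Both arguments take $K$ to be the $vw$-component of the component of $|G|-\{v,w\}$ containing $u$. Both then use the absence of cut vertices to show that every other component meets $w$ and avoids $u$, so all remaining edges at $v$ are joined through $w$ in $|G|-\{v,u\}$. Your extra checks (that $K$ is non-empty and that $v$--$w$ edges are harmless) make explicit what the paper leaves implicit.
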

\begin{proof}
	As $G$ is connected, every component of $|G|- \ls v,w \rs$ is adjacent to at least one of $v$ and $w$. And as $G$ is 2-vertex connected, so has no cut vertex, every component needs to be adjacent to both.

	There is a unique connected component of $|G|- \ls v,w \rs$ containing $u$. Let $K$ be the $vw$-component corresponding to this.
	Let $e_1, e_2\in E(v)\setminus K$. As every component of $|G|- \ls v,w \rs$ is adjacent to both $v$ and $w$, there are $vw$-paths $p_1$ and $p_2$ in $|G|- \ls u \rs$ that start with $e_1$ and $e_2$. Then the concatenation of $p_1$ and the inverse of $p_2$ gives a path in $|G|-\ls v,u \rs$ that connects $e_1$ to $e_2$ (see \cref{fig:vw_vu_comps}). Hence, they lie in the same $vu$-component.
	\begin{figure}
		\centering
		\includegraphics{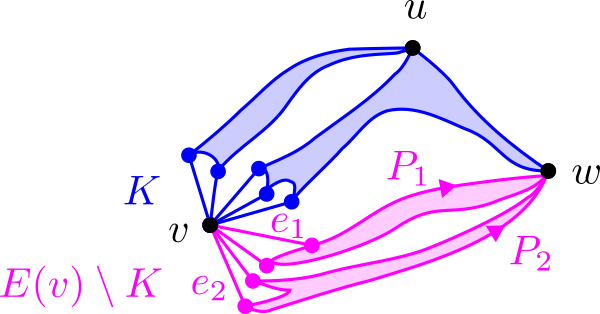}
		\caption{The $vw$-component $K$ containing $u$ has the property that $E(v)\setminus K$ is contained in a single $vu$-component.}
		\label{fig:vw_vu_comps}
	\end{figure}
\end{proof}

\begin{corollary}
	\label{lem:helper1}
	If $S\in \twosepposet_{vw}$ and $T\in \twosepposet_{vu}$, $u\neq w$, then $\ls S, T \rs \in \fibreonlytwo$.
\end{corollary}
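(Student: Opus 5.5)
By \cref{lem:fibreonlytwo_is_flag}, showing $\ls S,T\rs\in\fibreonlytwo$ amounts to two things. First, $\rho\cup\ls S,T\rs\in\FreeS$, i.e.\ the partitions $P_S$ and $P_T$ of $E(v)$ are compatible. Second, $S,T\in\twosepedges(\rho\cup\ls S,T\rs)$. We already know $S\in\twosepposet_{vw}$ and $T\in\twosepposet_{vu}$, so both are vertices of $\fibreonlytwo$. The plan is to establish compatibility first and then check the 2-separator condition directly via \cref{lem:equivalent_characterisations_of_E2sep}.

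For compatibility, apply \cref{lem:vw_vu_compatible}. It gives a $vw$-component $K$ such that $E(v)\setminus K$ lies inside a single $vu$-component $L$. By \cref{lem:P_vw_via_partitions}, each side of $P_S$ is a union of $vw$-components, so $K$ is contained in one side, say $A_S$. The other side $B_S$ is then contained in $E(v)\setminus K\subseteq L$. Likewise, each side of $P_T$ is a union of $vu$-components, so $L$, and hence $B_S$, lies entirely within one side $A_T$ of $P_T$. One side of $P_S$ is therefore contained in one side of $P_T$, which is exactly compatibility. This yields $\rho\cup\ls S,T\rs\in\FreeS$, and since $\twosepposet_v$ is full (\cref{lem:P_v_full_subcomplex}), in fact $\ls S,T\rs\in\twosepposet_v$.

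Much of the second condition is already covered by the fullness argument in \cref{lem:P_v_full_subcomplex}: if $|G_S|-\ls e_S,w\rs$ is disconnected, then so is $|G_{ST}|-\ls e_S,w\rs$. The idea is that expanding $T$ at $v$ only splits $v$ on one side of $S$, and since $w\neq v$, $w$ survives as a vertex of $G_{ST}$. The same holds with the roles of $S$ and $T$ exchanged, using $u$ in place of $w$. So both $S$ and $T$ lie in $\twosepedges(\rho\cup\ls S,T\rs)$ by \cref{lem:equivalent_characterisations_of_E2sep}.

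The main obstacle is the compatibility step, since \cref{lem:P_v_full_subcomplex} takes it as a hypothesis. The partition argument above supplies it, and the one delicate point is the subcase where $B_S$ is empty. This cannot happen, because both sides of a vertex of $\FreeS_v$ are non-empty. With that settled, the corollary follows.
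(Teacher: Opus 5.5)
Your proof is correct and follows the paper's argument: you get compatibility of $P_S$ and $P_T$ from \cref{lem:vw_vu_compatible} together with \cref{lem:P_vw_via_partitions}, and then the fullness of $\twosepposet_v$ (\cref{lem:P_v_full_subcomplex}) gives the 2-separator condition. Your extra paragraph re-deriving that fullness argument is redundant but harmless.
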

\begin{proof}
	By \cref{lem:P_v_full_subcomplex}, it is enough to show that $S$ and $T$ form an edge in $\FreeS_v$, i.e.~that $\rho\cup \ls S,T \rs \in \FreeS$.
	This is the case if and only if the partitions $P_S$ and $P_T$ of $E(v)$ induced by $S$ and $T$ are compatible with one another.
	By \cref{lem:P_vw_via_partitions}, each side of $P_S$ is a union of $vw$-components and each side of $P_T$ is a union of $vu$-components.
	Let $K$ be as in \cref{lem:vw_vu_compatible}. Then the side of $P_S$ not containing $K$ is contained in the single $vu$-component $E(v)\setminus K$ and hence in one of the sides of $P_T$. This shows that $P_S$ and $P_T$ are compatible.
\end{proof}

\subsubsection{Compatibility between $\twosepposet_v$ and $\twosepposet_w$}

Next, we describe when vertices from $\twosepposet_v$ form edges with vertices from $\twosepposet_w$ for some $w\neq v$.

\begin{lemma}
	\label{lem:helper2_1}
	Let $S\in \twosepposet_{v}$ and $T\in \twosepposet_{w}\setminus \twosepposet_{wv}$, where $w\neq v$.
	Then $\ls S, T \rs \in \fibreonlytwo$.
\end{lemma}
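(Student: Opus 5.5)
Since $S$ and $T$ sit in distinct components $M_v$ and $M_w$ of $M-\rho$, the system $\rho\cup\ls S,T\rs$ is automatically in $\FreeS$ (the link is a join). By \cref{lem:fibreonlytwo_is_flag} and the assumptions $S\in\twosepposet_v$, $T\in\twosepposet_w$, it therefore suffices to show that $S,T\in\twosepedges(\rho\cup\ls S,T\rs)$. The plan is to verify this with the vertex--edge cut criterion of \cref{lem:equivalent_characterisations_of_E2sep}. This criterion applies because $\rho\cup\ls S,T\rs\supseteq\rho\notin\Conebonds$, so $G_{ST}\in\FreeS\setminus\Conebonds$ by \cref{lem:rk_preserved_2_connected}.

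For $T$, the hypothesis $T\notin\twosepposet_{wv}$ combined with \cref{lem:Pv_union_of_Pvw} gives a vertex $u\neq v$ of $G$ with $T\in\twosepposet_{wu}$. Thus $|G_T|-\ls e_T,u\rs$ is disconnected, and by \cref{lem:P_vw_via_partitions} each side of the partition $P_T$ of $E(w)$ is a nonempty union of $wu$-components. Now $G_{ST}$ is obtained from $G_T$ by blowing up $v$, which is not $u$, into the edge $e_S$. Blowing up a vertex other than the removed cut vertex cannot reconnect the two sides, because the pieces of $|G_{ST}|-\ls e_T,u\rs$ map onto those of $|G_T|-\ls e_T,u\rs$ under collapsing $e_S$. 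Both endpoints of $e_S$ lie in the same piece, and no path can cross between the two sides without going through $u$ or $e_T$. Hence $|G_{ST}|-\ls e_T,u\rs$ is disconnected, and $T\in\twosepedges(G_{ST})$.

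For $S$, we choose, again by \cref{lem:Pv_union_of_Pvw}, a vertex $x$ with $S\in\twosepposet_{vx}$. If $x\neq w$, the argument of the previous paragraph with the roles of $S$ and $T$ exchanged shows that $|G_{ST}|-\ls e_S,x\rs$ is disconnected. The main obstacle is the case $x=w$, since then the cut vertex $w$ is itself blown up into $e_T$. In that case I would use the vertex $u$ instead, or one endpoint of $e_T$. The idea is that because $T$ separates $E(w)$ along $wu$-components and not along $wv$-components, some $wv$-component meets both sides of $P_T$. So one endpoint $w_1$ of $e_T$ carries all edges of $E(w)$ going into one side of the $v$-cut determined by $S$. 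If instead both endpoints of $e_T$ received edges from both sides of $|G|-\ls v,w\rs$ relative to $P_S$, we would get a conflict with $T\notin\twosepposet_{wv}$. We then check that $|G_{ST}|-\ls e_S,w_1\rs$ is disconnected: the side of $e_S$ that reaches $w$ only through $w_1$ is cut off.

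If this endpoint argument fails, the fallback is \cref{lem:adding_in_2_sep} applied with $\sigma=\rho\cup\ls S\rs$ and the new sphere $T$. Its hypothesis asks for some $\rho'$ with $\ls S\rs\subset\rho'\subseteq\rho\cup\ls S,T\rs$, $\rho'\in\FreeS\setminus\Conebonds$ and $T\in\twosepedges(\rho')$. We take $\rho'=\rho\cup\ls S,T\rs$ itself, whose membership in $\twosepedges$ was established in the previous paragraph. This gives $S\in\twosepedges(\rho\cup\ls S,T\rs)$ directly and bypasses the case analysis. This fallback is in fact cleaner, so I would present the proof this way: prove the claim for $T$ first, then obtain $S$ from \cref{lem:adding_in_2_sep}.
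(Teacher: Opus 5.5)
Your final argument is correct and is exactly the paper's proof. You get $T\in\twosepedges(\rho\cup\ls S,T\rs)$ from a vertex $u\neq v$ with $|G_T|-\ls e_T,u\rs$ disconnected, which stays disconnected after blowing up $v$, and then $S\in\twosepedges(\rho\cup\ls S,T\rs)$ from \cref{lem:adding_in_2_sep} applied to $\sigma=\rho\cup\ls S\rs$ with witness $\rho\cup\ls S,T\rs$; the exploratory case analysis for $x=w$ is unnecessary and can be dropped.
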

Note that $T\in \twosepposet_{w}\setminus \twosepposet_{wv}$ is automatically the case if $\twosepposet_{wv} =\emptyset$, which happens e.g.~if $\ls v,w \rs$ is not a 2-vertex cut.
\begin{proof}
	As $T\in \twosepposet_{w}\setminus \twosepposet_{wv}$, there is $u\neq v$ such that $|G_T|- \ls e_T, u\rs$ is disconnected. But then as $u\neq v$, we have that $u$ is also a vertex in  $G_{ST}$ and $|G_{ST}|- \ls e_T, u\rs$ is disconnected. Hence, $T\in \twosepedges(\rho\cup\ls S, T\rs)$. By assumption, $S\in \twosepedges(\rho\cup\ls S\rs)$. So using \cref{lem:adding_in_2_sep}, we also get that $S\in \twosepedges(\rho\cup\ls S, T\rs)$. It follows that $\ls S, T \rs \in \fibreonlytwo$.
\end{proof}

To describe the compatibility between elements from $\twosepposet_{vw}$ and $\twosepposet_{wv}$, we restrict to the case $p=0$ (i.e.~$\twosepedges(G) = \emptyset$) to avoid too many case distinctions.
In this case, the form of $vw$-components is simpler to describe:

\begin{lemma}
	\label{lem:vw_comps_for_p0}
	Let $\ls v,w \rs\subseteq V(G)$ be a 2-vertex cut and assume that $\twosepedges(G) = \emptyset$.
	Then every component of $|G|\setminus \ls v, w\rs$ either consists of (the interior of) a single edge or it is adjacent to both $v$ and $w$ with at least two edges.
\end{lemma}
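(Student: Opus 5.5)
We argue by contradiction, using the characterisation of $\twosepedges$ from \cref{lem:equivalent_characterisations_of_E2sep}. Throughout, $G=\Gamma(\rho)\in\FreeS\setminus\Conebonds$, so $G$ is 2-vertex connected and loop-free. Let $K$ be a connected component of $|G|-\ls v,w\rs$. As in the proof of \cref{lem:vw_vu_compatible}, 2-vertex connectivity forces $K$ to be adjacent to both $v$ and $w$, since otherwise the single vertex it attaches to would be a cut vertex.

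First suppose $K$ contains no vertex of $G$. Then $K$ is the interior of a single edge from $v$ to $w$, and we are in the first case. So assume $K$ contains at least one vertex. We must show that $K$ is joined to $v$ by at least two edges and to $w$ by at least two edges. By symmetry, it suffices to rule out the situation where exactly one edge $e$ joins $K$ to $v$.

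Suppose, then, that $e$ is the unique edge joining $K$ to $v$. Its other endpoint $x$ lies in $K$, and $x\neq w$ because $K$ avoids $w$. We claim that $|G|-\ls w,e\rs$ is disconnected. Indeed, after removing $w$ and the open edge $e$, every path from $x$ leaving $K$ would have to pass through $v$ or $w$. Since $e$ was the only connection from $K$ to $v$ and $w$ has been removed, no such path exists. Hence $K$ is separated from $v$. The other side, the one containing $v$, is non-empty: $\ls v,w\rs$ is a 2-vertex cut, so besides $K$ there is a further component, or a multiedge, meeting $v$. In either case $v$ survives on the side opposite $K$. By \cref{lem:equivalent_characterisations_of_E2sep}, this shows $e\in\twosepedges(G)$, contradicting $\twosepedges(G)=\emptyset$.

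The main point requiring care is the precondition of \cref{lem:equivalent_characterisations_of_E2sep}. That lemma only requires the existence of a vertex $u$ such that $|G|-\ls u,e\rs$ is disconnected, which holds here with $u=w$. The only degenerate case to check is when $w$ is an endpoint of $e$. This cannot happen, since the endpoints of $e$ are $v$ and $x\in K$. With this verified, the contradiction is complete, and we also have $e\neq$ any edge between $v$ and $w$. Therefore every component containing a vertex is joined by at least two edges to each of $v$ and $w$.
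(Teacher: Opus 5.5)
Your proof is correct and is essentially the paper's argument. A component that contains a vertex but meets $v$ in only one edge $e$ makes $|G|-\ls w,e\rs$ disconnected, so \cref{lem:equivalent_characterisations_of_E2sep} gives $e\in\twosepedges(G)$, a contradiction. You also spell out, via 2-vertex connectivity, why every such component meets both $v$ and $w$ at all, which the paper leaves implicit. Your argument that the side containing $v$ is non-empty is unnecessary, since the point $v$ already lies outside the component of $x$.
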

\begin{proof}
	Assume that $b$ is a component of $|G|\setminus \ls v, w\rs$ that is adjacent to $v$ with a single edge $e$. If $b$ consisted of more than just the interior of $e$, then $b\setminus \ls e \rs$ would be a non-empty component of $|G|-\ls w,e \rs$ and $w$ would not be an endpoint of $e$. Hence by \cref{lem:equivalent_characterisations_of_E2sep}, we would get $e\in \twosepedges(G)$, which is a contradiction.
\end{proof}

\begin{lemma}
	\label{lem:iso_Pvw_Pwv}
	Let $\ls v,w \rs\subseteq V(G)$ be a 2-vertex cut and assume that $\twosepedges(G) = \emptyset$.
	Then there is a (canonical) isomorphism $\phi_{vw}: \twosepposet_{vw} \to \twosepposet_{wv}$.
\end{lemma}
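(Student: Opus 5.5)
The plan is to build $\phi_{vw}$ on the level of the partition models of \cref{cor:partition_description_P_vw}. Both $\twosepposet_{vw}$ and $\twosepposet_{wv}$ are described entirely in terms of the set of $vw$-components at $v$ and the set of $wv$-components at $w$, together with the distinction between components with at least two edges (the $c_i$) and singleton components (the $b_j$). It therefore suffices to produce a bijection between these two index sets that preserves the $c$/$b$ type. Any such bijection induces an isomorphism of the partition complexes, because the conditions in \cref{cor:partition_description_P_vw} (no side empty, no side a single $b_j$, pairwise compatibility) are phrased purely in terms of that combinatorial data.

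The bijection comes from the connected components of $|G|-\ls v,w\rs$. Each $vw$-component is by definition the set of edges at $v$ going into one such connected component $K$, and likewise each $wv$-component is the set of edges at $w$ going into one such component. In the proof of \cref{lem:vw_vu_compatible} we saw that, since $G$ is 2-vertex connected (\cref{lem:Conebond_via_cut_vertices}, as $\rho\notin\Conebonds$), every component $K$ of $|G|-\ls v,w\rs$ is adjacent to both $v$ and $w$. Hence both sets of components are in canonical bijection with $\pi_0(|G|-\ls v,w\rs)$, and composing these bijections gives the map $K\cap E(v)\mapsto K\cap E(w)$.

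The remaining step is to show that this bijection preserves type, and here $\twosepedges(G)=\emptyset$ is essential. By \cref{lem:vw_comps_for_p0}, each component $K$ is either the interior of a single edge from $v$ to $w$, in which case it gives a singleton at both ends, or it meets both $v$ and $w$ in at least two edges, in which case it gives a $c$-type component at both ends. So singletons correspond to singletons and large components to large ones. One also has to handle the case of a multi-edge between $v$ and $w$. Such edges cannot occur, since $\rho\notin \Conebonds$ and then $G$ would be non-simple. Even if they did, each such edge would be its own component of $|G|-\ls v,w\rs$ and would fit the same description.

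Finally, I would define $\phi_{vw}$ on vertices by sending the sphere $S\in\twosepposet_{vw}$ with partition $P_S=\{A,B\}$ (each side a union of $vw$-components, by \cref{lem:P_vw_via_partitions}) to the sphere $T\in\FreeS_w$ whose partition of $E(w)$ has the corresponding unions of $wv$-components as its two sides. Such a $T$ exists and lies in $\twosepposet_{wv}$ by \cref{lem:P_vw_via_partitions}, because each side has at least two half-edges: a side is never a single $b_j$. Compatibility of partitions is preserved, so by the fullness in \cref{lem:P_v_full_subcomplex} this is a simplicial isomorphism with inverse $\phi_{wv}$. The main obstacle is the type-preservation step, since it is the only place where $p=0$ enters. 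Without it, a component could meet $v$ in one edge and $w$ in several, which breaks the correspondence.
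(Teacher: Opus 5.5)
Your proposal is correct and follows the same route as the paper. You identify both the $vw$- and $wv$-components with $\pi_0(|G|-\ls v,w\rs)$, use \cref{lem:vw_comps_for_p0} to see that the $c$/$b$ type is preserved, and take the induced map on partitions, which visibly preserves compatibility. One small correction: your claim that multi-edges between $v$ and $w$ cannot occur is false, since $\rho\notin\Conebonds$ only gives 2-vertex connectivity, not simplicity (the $\theta$-graph, exactly where this lemma is used in \cref{lem:fibre_theta_graph}, consists only of such edges); your fallback remark that each such edge is its own component of $|G|-\ls v,w\rs$ covers this case, so the argument still holds.
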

\begin{proof}
	This follows from the description of $\twosepposet_{vw}$ in terms of partitions in \cref{cor:partition_description_P_vw} and the description of the $vw$-components from \cref{lem:vw_comps_for_p0}.
	Both the $vw$-components and the $wv$-components are in one-to-one correspondence with the connected components of $|G|\setminus \ls v,w \rs$, so they are in one-to-one correspondence with one another.
	\cref{lem:vw_comps_for_p0} implies that this correspondence sends $vw$-components that contain at least two edges to $wv$-components that contain at least two edges, and it sends $vw$-components that consist of a single edge to $wv$-components that consist of a single edge.
	Define $\phi_{vw}$ to be the induced map on partitions of such components. Then clearly, $\phi_{vw}$ is a bijection on vertices.
	But it is also clear that $\phi_{vw}$ preserves compatibility of partitions, so it is a simplicial map and hence an isomorphism of simplicial complexes.
\end{proof}

It is not hard to see that if $S\in \twosepposet_{vw}$, then $\rho \cup \ls S, \phi_{vw}(S) \rs\in \FreeS\setminus \Conebonds$ and the edges corresponding to $S$ and $\phi_{vw}(S)$ form a 2-bond in $\Gamma(\rho\cup \ls S, \phi_{vw}(S) \rs)$.
In fact, one can check that all 2-bonds that one can introduce in $G$ correspond to pairs of spheres that are of this form.
Conceptually, this is why the following holds true:

\begin{lemma}
	\label{lem:helper2_2}
	Let $S\in \twosepposet_{vw}$ and $T\in \twosepposet_{wv}$. The following are equivalent:
	\begin{enumerate}
		\item $\ls S, T \rs \in \fibreonlytwo$
		\item $\ls\phi_{vw}(S), T\rs \in \FreeS_w$
		\item $\ls \phi_{vw}(S), T \rs \in \twosepposet_{wv}$.
	\end{enumerate}
\end{lemma}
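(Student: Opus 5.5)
We reduce everything to partitions. Let $\mathcal{K}$ be the set of connected components of $|G|-\ls v,w\rs$. By \cref{lem:vw_comps_for_p0}, these components are in canonical bijection with both the $vw$-components and the $wv$-components. Via \cref{cor:partition_description_P_vw}, $S$ then corresponds to a two-element partition $P_S=\ls A,\mathcal{K}\setminus A\rs$ of $\mathcal{K}$. Likewise $T$ corresponds to a partition $P_T=\ls B,\mathcal{K}\setminus B\rs$ of $\mathcal{K}$ (read at $w$), and by construction $\phi_{vw}(S)$ corresponds to $P_S$ read at $w$. Statement (2) says exactly that $P_S$ and $P_T$ are compatible as partitions of $\mathcal{K}$. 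The equivalence (2)$\Leftrightarrow$(3) is then immediate: $\twosepposet_{wv}$ is a full subcomplex of $\FreeS_w$ (by \cref{lem:P_v_full_subcomplex} together with the definition of $\twosepposet_{wv}$ as a full subcomplex of $\twosepposet_w$), and both vertices already lie in $\twosepposet_{wv}$.

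It remains to show (1)$\Leftrightarrow$(2). Note first that $\rho\cup\ls S,T\rs\in\FreeS$ holds automatically, since $S$ and $T$ live in different components $M_v$ and $M_w$. So (1) is equivalent to $\ls e_S,e_T\rs\subseteq\twosepedges(G_{ST})$, and we use the criterion of \cref{lem:equivalent_characterisations_of_E2sep}. In $G_{ST}$ the vertex $v$ is blown up to $e_S$ with endpoints $v_A,v_{A^c}$, where $v_A$ carries the edges of the components in $A$. Similarly $w$ is blown up to $e_T$ with endpoints $w_B,w_{B^c}$. Each component $K\in\mathcal{K}$ therefore joins $v_{A\text{-side}(K)}$ to $w_{B\text{-side}(K)}$. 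Every component of $|G|-\ls v,w\rs$ is connected and is adjacent to both $v$ and $w$, so up to homotopy $G_{ST}$ is the bipartite multigraph on $\ls v_A,v_{A^c}\rs\sqcup\ls w_B,w_{B^c}\rs$. This multigraph has an edge $v_Xw_Y$ for each component lying in $X\cap Y$, together with the two edges $e_S$ and $e_T$. These four vertices, glued to the rest of the graph, carry all the connectivity information relevant to $e_S,e_T$.

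For (2)$\Rightarrow$(1): compatibility means that one of the four intersections $A\cap B$, $A\cap B^c$, $A^c\cap B$, $A^c\cap B^c$ is empty. Say $A\cap B^c=\emptyset$, so that $A\subseteq B$. Then $v_A$ is joined only to $w_B$, and removing $e_S$ and $e_T$ separates $\ls v_A,w_B\rs$ from $\ls v_{A^c},w_{B^c}\rs$. Hence $\ls e_S,e_T\rs$ is a 2-bond and both edges lie in $\twosepedges(G_{ST})$. For (1)$\Rightarrow$(2) we argue by contraposition. Suppose all four intersections are non-empty, so all four bipartite adjacencies are present. We must show that $e_S\notin\twosepedges(G_{ST})$, that is, that for every vertex $u$ the space $|G_{ST}|-\ls u,e_S\rs$ still connects $v_A$ to $v_{A^c}$. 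If $u\notin\ls w_B,w_{B^c}\rs$, the path $v_A\to w_B\to v_{A^c}$ or the path $v_A\to w_{B^c}\to v_{A^c}$ avoids $u$. This requires choosing, inside the relevant components $K\in A\cap B$ and $K'\in A^c\cap B$, paths that avoid an interior vertex $u$; such paths exist because a component containing $u$ can be swapped for another component in the same intersection, or else one uses the route through the other $w$-endpoint. If $u=w_B$, we use the path $v_A\to w_{B^c}\to v_{A^c}$ through components in $A\cap B^c$ and $A^c\cap B^c$, which do not meet $w_B$. The case $u=w_{B^c}$ is symmetric. The same argument applies to $e_T$.

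The main obstacle is the careful handling of a cut vertex $u$ lying inside a component $K$ (rather than at $w_B$ or $w_{B^c}$). In this case we need that some path from $v_X$ to $w_Y$ avoids $u$. We plan to combine the absence of cut vertices in $G_{ST}$ (from \cref{lem:collapsing_2_cut_edges}, or more directly because $G$ is 2-vertex connected) with the availability of two alternative routes through different intersections, as in the path-concatenation arguments of \cref{lem:adding_in_2_sep}.
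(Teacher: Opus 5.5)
Your proposal has a genuine gap in (2)$\Rightarrow$(1). From $A\cap B^c=\emptyset$ you conclude that deleting $e_S$ and $e_T$ separates $\ls v_A,w_B\rs$ from $\ls v_{A^c},w_{B^c}\rs$, so that $\ls e_S,e_T\rs$ is a 2-bond in $G_{ST}$. This holds only if also $A^c\cap B=\emptyset$, i.e.\ $A=B$, which means $T=\phi_{vw}(S)$. Compatibility also allows $A\subsetneq B$. In that case, any component $K\in A^c\cap B$ attaches to both $w_B$ and $v_{A^c}$. Then $v_A$, $w_B$ and $v_{A^c}$ are joined through a component in $A$ and through $K$ without using $e_S$ or $e_T$, so $G_{ST}-\ls e_S,e_T\rs$ is connected. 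A concrete case is the rank-$4$ $\theta$-graph with edges $b_1,\dots,b_5$: the partitions $A=\ls b_1,b_2\rs$ and $B=\ls b_1,b_2,b_3\rs$ are admissible and compatible, but $b_3$ joins $w_B$ to $v_{A^c}$. So this step fails, although the conclusion (1) is still true.

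Within your framework, the repair is to use the vertex--edge criterion of \cref{lem:equivalent_characterisations_of_E2sep} instead of a 2-bond; this applies since $G_{ST}\in\FreeS\setminus\Conebonds$ by \cref{lem:rk_preserved_2_connected}.
\begin{itemize}
\item For $e_S$: if $A\subseteq B$, then $|G_{ST}|-\ls w_B,e_S\rs$ is disconnected. The reason is that $v_A$ together with the components in $A$ meets the rest of the graph only at $w_B$ and along $e_S$.
\item For $e_T$: since $B^c\subseteq A^c$, the set $|G_{ST}|-\ls v_{A^c},e_T\rs$ is disconnected for the same reason.
\end{itemize}
The other three cases are symmetric.

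The paper avoids this case analysis entirely. In $\rho\cup\ls S,\phi_{vw}(S),T\rs$ the pair $S,\phi_{vw}(S)$ is a genuine 2-bond. \cref{lem:2_sep_collapsed_reduced_size} then gives $T\in\twosepedges(\rho\cup\ls S,\phi_{vw}(S),T\rs)$, and \cref{lem:fibreonlytwo_downwards_closed} passes down to $\ls S,T\rs$.

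Your treatment of (2)$\Leftrightarrow$(3) and (1)$\Rightarrow$(2) matches the paper. The ``main obstacle'' you flag is not actually an obstacle. The routes through $w_B$ and through $w_{B^c}$ use four distinct components of $|G|-\ls v,w\rs$ and distinct $w$-endpoints, so they are internally disjoint. Hence any vertex $u$ lies on at most one of them, and the second part of \cref{lem:equivalent_characterisations_of_E2sep} immediately gives $e_S\notin\twosepedges(G_{ST})$.
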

\begin{proof}
	It is clear that the third item implies the second. 
	That the second item also implies the third follows from \cref{lem:P_v_full_subcomplex}.

	To see that the first item implies the second one, assume that the second item does not hold, i.e.~$\ls\phi_{vw}(S), T\rs \not\in \FreeS_w$.
	Let $X$ be the set of connected components of $|G|\setminus \ls v,w \rs$. Since $S\in \twosepposet_{vw}$ and $T\in \twosepposet_{wv}$, these correspond to two-element partitions $P_S = \ls A_S, B_S \rs$ and $P_T = \ls A_T, B_T \rs$ of $X$. As $\ls\phi_{vw}(S), T\rs \not\in \FreeS_w$, these partitions are not compatible, so all four intersections
	\begin{equation*}
		A_S\cap A_T,\, A_S\cap B_T,\, B_S\cap A_T,\, B_S\cap B_T
	\end{equation*}
	are non-empty.
	This implies that in $G_{ST} = \Gamma(\rho\cup \ls S, T \rs)$, there are two disjoint paths between the endpoints of the edge $e_S$ corresponding to $S$, see \cref{fig:ST_not_in_two_sep}. By \cref{lem:equivalent_characterisations_of_E2sep}, this shows that $S\not\in \twosepedges(\rho\cup \ls S, T \rs)$. (And similarly for $T$.) 
	Hence, the first item does not hold.
	\begin{figure}
		\centering
		\includegraphics{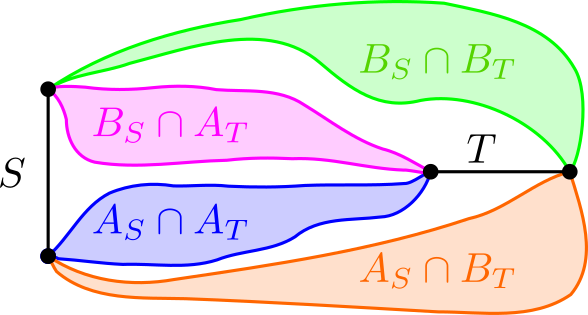}
		\caption{The partitions $P_S = \ls A_S, B_S \rs$ and $P_T = \ls A_T, B_T \rs$ are compatible with each other and hence give rise to disjoint paths between the endpoints of $e_S$ and $e_T$.}
		\label{fig:ST_not_in_two_sep}
	\end{figure}
	
	To see that the second item implies the first one, assume that $\ls\phi_{vw}(S), T\rs \in \FreeS_w$. As noted above, we have $\rho \cup \ls \phi_{vw}(S), S \rs\in \FreeS\setminus \Conebonds$ and  $\phi_{vw}(S)$ and $S$ form a 2-bond in it. This implies that they also form a 2-bond in $\rho \cup \ls S, \phi_{vw}(S), T \rs$, so 
	\begin{equation*}
		\ls S, \phi_{vw}(S) \rs\in \twosepedges(\rho \cup \ls S, \phi_{vw}(S), T \rs).
	\end{equation*}
	As $T\in \twosepedges(\rho \cup \ls T \rs)$, \cref{lem:2_sep_collapsed_reduced_size} implies that also $T\in \twosepedges(\rho \cup \ls S, \phi_{vw}(S), T \rs)$. Hence, $\ls S,\phi_{vw}(S),  T \rs\in \fibreonlytwo$. By \cref{lem:fibreonlytwo_downwards_closed}, we then also have $\ls S, T \rs\in \fibreonlytwo$.	
\end{proof}

\subsection{Homotopy type of \texorpdfstring{$\fibreonlytwo$}{C'}}

We now assemble the above results to describe the homotopy type of $\fibreonlytwo$ for $p=0$.

\begin{lemma}
	\label{lem:2_separators_in_Ctwobonds}
	If $\twosepedges(\rho) = \emptyset$, then $G = \Gamma(\rho)$ contains a 2-vertex cut $\ls v,w \rs$ such that $\twosepposet_{vw}\neq \emptyset$.
\end{lemma}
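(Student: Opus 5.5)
The statement is used in the context of the upper interval, so $\rho\in \Ctwobonds_{0,q}\setminus\Ctwobonds_{0,q-1}$. In particular $\rho\in\Ctwobonds\setminus\Conebonds$ and $\twosepedges(\rho)=\emptyset$. I would first extract a witness from the definition of $\Ctwobonds$. Since $\rho\notin\Conebonds$ but $\rho\in\Ctwobonds$, there is $\tau\supseteq\rho$ with $\Gamma(\tau)$ containing a 2-bond $\{e,f\}$. By \cref{lem:rk_preserved_2_connected}, $\tau\in\FreeS\setminus\Conebonds$. Neither sphere $S_e$ nor $S_f$ can lie in $\rho$: if $S_e\in\rho$, then $S_e\in\twosepedges(\tau)\cap\rho\subseteq\twosepedges(\rho)$ by \cref{lem:collapsing_not_to_C_preserves_2_cut_edges}, contradicting $\twosepedges(\rho)=\emptyset$. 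So both $e$ and $f$ are collapsed when passing from $\Gamma(\tau)$ to $G$.

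As in the proof of \cref{lem:equivalent_characterisations_of_E2sep}, I would then reduce to the case where nothing else is collapsed that matters. Let $\tau'=\rho\cup\{S_e,S_f\}$. Collapsing the other edges of $\tau\setminus\tau'$ keeps $\{e,f\}$ a 2-bond, so $\{e,f\}$ is a 2-bond in $G'=\Gamma(\tau')\in\FreeS\setminus\Conebonds$. By \cref{lem:two_bonds_two_comps} and \cref{lem:every_2_bond_cyclic}, $G'-\{e,f\}$ has two components $K_1,K_2$, each with nontrivial fundamental group and each containing one endpoint of $e$ and one of $f$. Collapsing $e$ and $f$ produces vertices $v$ (image of $e$) and $w$ (image of $f$) of $G$. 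These are distinct, because $e$ and $f$ are disjoint and do not share endpoints. The components $K_1,K_2$ minus the endpoints of $e,f$ are nonempty, since each has rank at least one, and they become separated in $|G|-\{v,w\}$. Hence $\{v,w\}$ is a 2-vertex cut.

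It remains to show $\twosepposet_{vw}\neq\emptyset$. I would take $S:=S_e$, which lies in $\FreeS_v$ since $e$ blows up $v$, and show that $S\in\twosepposet_{vw}$. This requires $S\in\twosepedges(\rho\cup\{S\})$ and $|G_S|-\{e_S,w\}$ disconnected. Both follow at once: $G_S=\Gamma(\rho\cup\{S_e\})$ is obtained from $G'$ by collapsing $f$ to $w$. Since $K_1,K_2$ each contain more than one vertex, $|G'|-\{e,\bar f\}\cong|G_S|-\{e_S,w\}$ is disconnected, exactly as in the proof of \cref{lem:equivalent_characterisations_of_E2sep}. That same lemma then gives $e_S\in\twosepedges(G_S)$. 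Alternatively, this is immediate from the definition, since $\tau'\geq G_S$ contains the 2-bond $\{e,f\}$.

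The main obstacle I expect is bookkeeping rather than substance. One must check that $\rho\cup\{S_e\}\in\FreeS$ and lies in the link component $\FreeS_v$. One must also check that the topological disconnection of $|G_S|-\{e_S,w\}$ survives, which needs each $K_i$ to have more than one vertex; \cref{lem:every_2_bond_cyclic} supplies this.
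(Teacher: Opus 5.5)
Your proof is correct and follows the paper's argument. You take a witness $\tau\supseteq\rho$ whose dual graph has a 2-bond, and use \cref{lem:collapsing_not_to_C_preserves_2_cut_edges} together with $\twosepedges(\rho)=\emptyset$ to see that the bond spheres are not in $\rho$, so that $S_e\in\twosepposet_v$. The only difference is that you name $w$ explicitly as the image of the other bond edge $f$ and check the disconnection of $|G_S|-\ls e_S,w\rs$ by hand, essentially inlining the proof of \cref{lem:equivalent_characterisations_of_E2sep}. The paper instead invokes \cref{lem:Pv_union_of_Pvw} to produce some suitable $w$.
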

\begin{proof}
	As $\rho\in \Ctwobonds$, there is $\sigma\supseteq \rho$ such that $\sigma$ contains $S, T$ that define a 2-bond in $\Gamma(\sigma)$. In particular, $S,T\in \twosepedges(\sigma)$ and hence by \cref{lem:collapsing_not_to_C_preserves_2_cut_edges}, we also have $S\in \twosepedges(\rho \cup \ls S \rs)$. As $\twosepedges(\rho) = \emptyset$, \cref{lem:collapsing_not_to_C_preserves_2_cut_edges} implies that $S\not\in \rho$.
	Let $v$ be the vertex in $G $ that $S$ collapses to. Then we get $S\in \twosepposet_v$. Hence by \cref{lem:Pv_union_of_Pvw}, we get $S\in \twosepposet_{vw}$ for some 2-vertex cut $\ls v,w \rs$.
\end{proof}

\begin{lemma}
	\label{lem:contractible_upper_fibre}
	Assume that $\twosepedges(\rho) = \emptyset$ and that $G = \Gamma(\rho)$ is not a $\theta$-graph. Then $\fibreonlytwo$ has a cone point, so is contractible.
\end{lemma}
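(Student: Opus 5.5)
We will produce an explicit vertex $S_0$ of $\fibreonlytwo$ that is joined by an edge to every other vertex. Since $\fibreonlytwo$ is a flag complex (\cref{lem:fibreonlytwo_is_flag}), such an $S_0$ is a cone point, and contractibility follows.

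\textbf{Choosing the cut and the cone point.} By \cref{lem:2_separators_in_Ctwobonds} there is a 2-vertex cut $\ls v,w\rs$ with $\twosepposet_{vw}\neq\emptyset$. By \cref{lem:vw_comps_for_p0}, each component of $|G|-\ls v,w\rs$ is either a single open edge or meets both $v$ and $w$ in at least two edges.
\begin{itemize}
\item If every component is a single edge, then the connectedness of $G$ together with the absence of cut vertices forces $V(G)=\ls v,w\rs$. Since there are no loops, $G$ is then a $\theta$-graph, which is excluded.
\item Otherwise some $vw$-component $c_1$ has at least two edges. \Cref{lem:cone_point_P_vw} then yields the vertex $S_0\in\twosepposet_{vw}$ whose partition separates $c_1$ from all other $vw$-components, and $S_0$ is a cone point of $\twosepposet_{vw}$.
\end{itemize}
It remains to show that $S_0$ is adjacent in $\fibreonlytwo$ to every other vertex $T$.

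\textbf{Case analysis for $T$.} Let $T$ be a vertex of $\fibreonlytwo$ with $T\in\twosepposet_x$, where $x\in V(G)$ is the vertex at which $T$ blows up.

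Case $x=v$. By \cref{lem:Pv_union_of_Pvw}, $T\in\twosepposet_{vu}$ for some 2-vertex cut $\ls v,u\rs$.
\begin{itemize}
\item If $u\neq w$, \cref{lem:helper1} gives $\ls S_0,T\rs\in\fibreonlytwo$.
\item If $u=w$, the cone-point property inside $\twosepposet_{vw}$ gives the edge.
\end{itemize}

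Case $x\neq w$ and $x\neq v$. Apply \cref{lem:helper2_1} with the roles arranged so that $T\in\twosepposet_x\setminus\twosepposet_{xv}$, which yields the edge.
\begin{itemize}
\item If $T\notin\twosepposet_{xv}$, this applies directly.
\item If $T\in\twosepposet_{xv}$, swap the roles. We have $S_0\in\twosepposet_v$, and we need $S_0\notin\twosepposet_{vx}$. When $\ls v,x\rs$ is not a 2-vertex cut this is automatic. Otherwise, $S_0$ lies only in $\twosepposet_{vw}$ unless its partition is also a union of $vx$-components. Here I expect to argue from \cref{lem:vw_vu_compatible}: $E(v)$ minus one $vw$-component lies in a single $vx$-component. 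One must then check that $S_0$, or possibly a better choice of $c_1$, is still adjacent to $T$ in this case via \cref{lem:helper1}-type reasoning, which is symmetric.
\end{itemize}

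Case $x=w$.
\begin{itemize}
\item If $T\notin\twosepposet_{wv}$, \cref{lem:helper2_1} applies with the roles swapped (taking $S=T$ and $T=S_0$ requires $S_0\notin\twosepposet_{vw}$, which fails). So instead, use directly that $T\in\twosepposet_{wu}$ with $u\neq v$, and run the argument of \cref{lem:helper2_1} from $T$'s side: $T$ stays in a 2-separator after adding $S_0$, and then \cref{lem:adding_in_2_sep} keeps $S_0$ in one.
\item If $T\in\twosepposet_{wv}$, use \cref{lem:helper2_2}: we need $\phi_{vw}(S_0)$ to be compatible with $T$ in $\twosepposet_{wv}$. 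Since $\phi_{vw}$ is an isomorphism (\cref{lem:iso_Pvw_Pwv}), $\phi_{vw}(S_0)$ is the partition isolating the image of $c_1$, which again has at least two edges. It is therefore a cone point of $\twosepposet_{wv}$, so it is compatible with $T$ or equal to it.
\end{itemize}

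\textbf{Main obstacle.} The delicate step is the mixed case in which $T$ lies in $\twosepposet_{xv}$ for a third vertex $x$ and $S_0$ might lie in $\twosepposet_{vx}$. To handle this, I would choose $c_1$ carefully, for instance as the $vw$-component containing the relevant vertices, using \cref{lem:vw_vu_compatible}, so that the needed compatibility holds. If that choice fails, I would check flagness directly via the disjoint-path criterion from the proof of \cref{lem:helper2_2}.
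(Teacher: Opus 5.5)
Your choice of the cut $\ls v,w\rs$ and of $S_0$, the reduction to edges via flagness, and the cases $x=v$ and $x=w$ all match the paper's proof. In the subcase $x=w$, $T\notin\twosepposet_{wv}$, the argument you end up running is just \cref{lem:helper2_1} applied directly with $S=S_0\in\twosepposet_v$; no swap is involved.

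\textbf{The gap.} Your proof leaves open the case $x\notin\ls v,w\rs$ with $T\in\twosepposet_{xv}$ and $S_0\in\twosepposet_{vx}$. This case genuinely occurs, for example when $G$ is a $4$-cycle with all edges doubled, $w$ is opposite to $v$ and $x$ is adjacent to $v$. There the hypothesis of \cref{lem:helper2_1}, as literally stated, holds in neither order. Your proposed remedies do not close it:
\begin{itemize}
\item $c_1$ cannot be chosen depending on $T$, because $S_0$ must be fixed before $T$ varies.
\item \cref{lem:helper1} concerns two spheres at the same vertex, where the issue is compatibility of partitions. For spheres at the distinct vertices $v$ and $x$, $\rho\cup\ls S_0,T\rs\in\FreeS$ is automatic from the join decomposition. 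The only question is whether both edges remain in 2-separators.
\end{itemize}

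\textbf{The missing observation.} The proof of \cref{lem:helper2_1} only uses that one of the two spheres has a witness vertex different from the vertex at which the other is blown up. $S_0$ always has such a witness, namely $w$.
\begin{itemize}
\item Since $S_0\in\twosepposet_{vw}$, the space $|G_{S_0}|-\ls e_{S_0},w\rs$ is disconnected.
\item As $x\neq w$, the vertex $w$ survives in $G_{S_0T}$. The space $|G_{S_0T}|-\ls e_{S_0},w\rs$ contains the whole closed edge $\overbar{e_T}$, and collapsing that edge recovers $|G_{S_0}|-\ls e_{S_0},w\rs$. So $|G_{S_0T}|-\ls e_{S_0},w\rs$ is still disconnected.
\item Hence $S_0\in\twosepedges(\rho\cup\ls S_0,T\rs)$ by \cref{lem:equivalent_characterisations_of_E2sep}. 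Here $\rho\cup\ls S_0,T\rs\notin\Conebonds$ by \cref{lem:rk_preserved_2_connected}.
\item Now apply \cref{lem:adding_in_2_sep} to $\rho\cup\ls T\rs$, the sphere $T\in\twosepedges(\rho\cup\ls T\rs)$ and the added sphere $S_0$, with $\rho\cup\ls S_0,T\rs$ as the intermediate system. It gives $T\in\twosepedges(\rho\cup\ls S_0,T\rs)$.
\end{itemize}
Thus $\ls S_0,T\rs\in\fibreonlytwo$ for every $T\in\twosepposet_x$ with $x\notin\ls v,w\rs$. No condition on $\twosepposet_{xv}$ or $\twosepposet_{vx}$ is needed, and no special choice of $c_1$. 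The paper disposes of this case with a one-line appeal to \cref{lem:helper2_1}; what makes that appeal valid is exactly this reading of its proof.

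\textbf{Your fallback also works.} If $S_0\in\twosepposet_{vx}$, then \cref{lem:vw_vu_compatible} forces the partition of $S_0$ to isolate a single $vx$-component with at least two edges. So $S_0$ is also a cone point of $\twosepposet_{vx}$, and \cref{lem:helper2_2} for the cut $\ls v,x\rs$ applies. The witness argument is shorter.
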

\begin{proof}
	By \cref{lem:2_separators_in_Ctwobonds}, there is a 2-vertex cut $\ls v,w \rs$ in $G$ such that $\twosepposet_{vw}\neq \emptyset$.
	By \cref{lem:vw_comps_for_p0}, every component of $|G|\setminus \ls v,w \rs$ is either a single edge or is adjacent to both $v$ and $w$ with at least two edges. 
	If every component consisted of a single edge, then $G$ would be a $\theta$-graph, which we excluded by assumption. Hence, there is at least one component that is adjacent to both $v$ and $w$ with at least two edges. By \cref{lem:cone_point_P_vw}, this implies that $\twosepposet_{vw}$ has a cone point.
	We want to show that this cone point is also a cone point for $\fibreonlytwo$.
	By \cref{lem:fibreonlytwo_is_flag}, it is enough to show that for every $T\in \fibreonlytwo$, we have $\ls S,T \rs \in \fibreonlytwo$.

	First assume that $T\in \twosepposet_v$. Then if $T\in \twosepposet_{vw}$, we have $\ls S,T \rs \in \fibreonlytwo$ as $S$ is a cone point of $\twosepposet_{vw}$. If $T\in \twosepposet_v\setminus\twosepposet_{vw}$, then by \cref{lem:Pv_union_of_Pvw}, we have $T\in \twosepposet_{vu}$ for some 2-vertex cut $\ls v,u \rs$ with $u\neq w$. Then by \cref{lem:helper1}, we have $\ls S,T \rs \in \fibreonlytwo$.

	Now assume that $T\not\in \twosepposet_v$. If $T\not \in \twosepposet_{wv}$, then by \cref{lem:helper2_1}, we have $\ls S,T \rs \in \fibreonlytwo$. If $T\in \twosepposet_{wv}$, then by \cref{lem:helper2_2}, we have $\ls S,T \rs \in \fibreonlytwo$ if and only if $\phi_{vw}(S)\cup T\in \twosepposet_{wv}$. But as $S$ is a cone point of $\twosepposet_{vw}$ and $\phi_{vw}$ is an isomorphism, we know that $\phi_{vw}(S)$ is a cone point of $\twosepposet_{wv}$. Hence, $\phi_{vw}(S)\cup T\in \twosepposet_{wv}$.
\end{proof}

It remains to consider the case where $G$ is a $\theta$-graph. Here, we do not get a cone point, but we can still describe the homotopy type of $\fibreonlytwo$ explicitly. 

\begin{lemma}
	\label{lem:fibre_theta_graph}
	If $G = \Gamma(\rho)$ is a $\theta$-graph with vertex set $V(G) = \ls v,w\rs$, then $\fibreonlytwo\simeq \twosepposet_{vw}$.
	This complex is homotopy equivalent to a wedge of $(n-1)!$ spheres of dimension $n-3$.
\end{lemma}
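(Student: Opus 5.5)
For the $\theta$-graph the structure is very rigid, so we compute everything explicitly. There are exactly two vertices $v,w$, and $|G|-\ls v,w\rs$ has $n+1$ components, each the interior of a single edge. So all $vw$-components and $wv$-components are singletons $b_1,\ldots,b_{n+1}$, and \cref{lem:vw_comps_for_p0} and \cref{lem:iso_Pvw_Pwv} apply. Since $\{v,w\}$ is the only 2-vertex cut, \cref{lem:Pv_union_of_Pvw} gives $\twosepposet_v=\twosepposet_{vw}$ and $\twosepposet_w=\twosepposet_{wv}$. Moreover every partition of $E(v)$ into two sides of size at least two is a union of $vw$-components, so $\twosepposet_{vw}=\FreeS_v$ is the whole partition complex. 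Thus $\fibreonlytwo$ is the flag complex on the vertex set $\twosepposet_{vw}\sqcup\twosepposet_{wv}$. Its edges within each part are those of $\FreeS_v$ (respectively $\FreeS_w$), by \cref{lem:P_v_full_subcomplex}. By \cref{lem:helper2_2}, $S\in\twosepposet_{vw}$ and $T\in\twosepposet_{wv}$ span an edge if and only if $\phi_{vw}(S)$ and $T$ are equal or compatible.

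The plan is to deformation retract this complex onto one side. Define the vertex map $r\colon\fibreonlytwo\to\twosepposet_{vw}$ by $r(S)=S$ on $\twosepposet_{vw}$ and $r(T)=\phi_{vw}^{-1}(T)$ on $\twosepposet_{wv}$. A simplex $\sigma=\sigma_v\sqcup\sigma_w$ has all pairs compatible after applying $\phi_{vw}^{-1}$ on the $w$-part, so $r$ is simplicial because $\twosepposet_{vw}$ is flag. Moreover $\sigma\cup r(\sigma)$ is again a simplex of $\fibreonlytwo$: for $T\in\sigma_w$ and $T'\in\sigma_w$ we need $T$ and $\phi_{vw}^{-1}(T')$ adjacent, which is compatibility of $T$ and $T'$ (or equality), and this holds. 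By flagness, checking pairs suffices. Hence $r$ is contiguous to the identity, and composed with the inclusion $\twosepposet_{vw}\hookrightarrow\fibreonlytwo$ it gives a homotopy equivalence, proving $\fibreonlytwo\simeq\twosepposet_{vw}$. (Alternatively, use Quillen's fibre theorem for the poset map $\sigma\mapsto r(\sigma)$, or the monotone map $\sigma\mapsto\sigma\cup r(\sigma)$ followed by $\sigma\mapsto r(\sigma)\cup\sigma_v$; I would present it via these monotone poset maps, in the style of the paper.)

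It remains to identify $\twosepposet_{vw}=\FreeS_v$. This is the complex of compatible families of 2-splittings of a set of size $n+1$ with both sides of size at least two, i.e.\ the complex of trees with $n+1$ labelled leaves: the link of the rose-like vertex, equivalently the space of phylogenetic trees / Whitehouse complex. It is known (Robinson--Whitehouse, Vogtmann) to be homotopy equivalent to a wedge of $(n-1)!$ spheres of dimension $(n+1)-4=n-3$. I would cite this rather than reprove it. The main obstacle is the contiguity step: verifying carefully, from \cref{lem:helper2_2}, that equality $T=\phi_{vw}(S)$ is allowed (yes, since $S$ and $\phi_{vw}(S)$ form a 2-bond) and that the edge condition really reduces to compatibility in a single partition complex.
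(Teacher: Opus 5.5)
Your proof is correct and is essentially the paper's argument: both use \cref{lem:helper2_2} together with flagness to transport the $w$-part along $\phi_{vw}$, and both then cite Vogtmann/Robinson--Whitehouse for the partition complex. The only cosmetic difference is the target of the retraction. The paper uses the single increasing map $\sigma_v\cup\sigma_w\mapsto(\sigma_v\cup\phi_{wv}(\sigma_w))\cup(\sigma_w\cup\phi_{vw}(\sigma_v))$ onto a ``diagonal'' copy of $\mathcal{P}_{vw}$, whereas you retract onto $\mathcal{P}_{vw}$ itself by a map contiguous to the identity.
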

\begin{proof}
	In this case, the vertex set of $\fibreonlytwo$ is given as the disjoint union of $\twosepposet_{v} = \twosepposet_{vw}$ and $\twosepposet_{w} = \twosepposet_{wv}$. By \cref{lem:iso_Pvw_Pwv}, there is an isomorphism $\phi_{vw}: \twosepposet_{vw} \to \twosepposet_{wv}$.
	The complex $\fibreonlytwo$ contains both $\twosepposet_{v}$ and $\twosepposet_{w}$ as (full) subcomplexes.
	Hence, we can write every $\sigma\in \fibreonlytwo$ uniquely as $\sigma = \sigma_v \cup \sigma_w$ with $\sigma_v\in \twosepposet_{v}$ and $\sigma_w\in \twosepposet_{w}$.
	Define
	\begin{equation*}
		\fibreonlytwo\to \fibreonlytwo
	\end{equation*}
	by sending $\sigma_v\cup \sigma_w$ to $\left(\sigma_v\cup\phi_{wv}(\sigma_w)\right)\cup \left(\sigma_w\cup \phi_{vw}(\sigma_v)\right)$.
	That this map is well-defined follows from \cref{lem:helper2_2} and \cref{lem:fibreonlytwo_is_flag}.
	This defines a monotone map on the poset of simplices of $\fibreonlytwo$, hence a homotopy equivalence to its image. But this image consists of all pairs of the form $\sigma_v \cup \phi_{vw}(\sigma_v)$ and hence is isomorphic to $\twosepposet_{vw}$.
	
	The homotopy type of $\twosepposet_{vw}$ can be computed using the partition description from \cref{cor:partition_description_P_vw}. Vogtmann showed that this complex is homotopy equivalent to a wedge of $(n-1)!$ spheres of dimension $n-3$ \cite[Proof of Theorem 2.4]{Vog:Localstructuresome}, see \cite[Section 3.2]{Billera2001}. In fact, this poset is Cohen--Macaulay \cite{Robinson1996}, even shellable \cite{Trappmann1998}.
\end{proof}

The previous two lemmas now give a complete description of the homotopy types of the upper interval and prove \cref{prop:upper_fibre}:
\begin{proof}[Proof of \cref{prop:upper_fibre}] 
	Using the homotopy equivalence $(\Ctwobonds_{p,q})_{\supset \rho} = \fibrealltwo \simeq \fibreonlytwo$ from \cref{lem:upper_fibre_simeq_fibreonlytwo}, \cref{prop:upper_fibre} immediately follows from \cref{lem:contractible_upper_fibre} and \cref{lem:fibre_theta_graph}.
\end{proof}

\section{Finishing the proof for the 2-bond thickening}
\label{sec:finishing_proof}

Assembling the results about the lower fibres and upper intervals, we can now finish the proof of \cref{thm:C'_he_C}, which states that the inclusion $\Conebonds\hookrightarrow \Ctwobonds$ is $(2n-3)$-connected.

\begin{proposition}
	\label{prop:inclusion_fibres_highly_connected}
	Let $\sigma\in \Ctwobonds_{p,q}\setminus \Ctwobonds_{p,q-1}$. Then the join 
	\begin{equation}
		\label{eq:fibre_join_two_bonds}
			f^{-1}((\Ctwobonds_{p,q})_{\subseteq \sigma})\ast (\Ctwobonds_{p,q})_{\supset \sigma}
	\end{equation}
	is contractible except if $\Gamma(\sigma)$ is a $\theta$-graph, which is the case if and only if $p=0$ and $q = n+1$. 
	In that case, the join is homotopy equivalent to a wedge of $(n-1)!$ spheres of dimension $2n-3$.
\end{proposition}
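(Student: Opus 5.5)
The plan is to split into cases according to $p$, using \cref{prop:lower_fibre} and \cref{prop:upper_fibre}, together with the fact that a join is contractible as soon as one factor is contractible.

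\textbf{Case $p\geq 1$.} By \cref{prop:lower_fibre}, the lower fibre $f^{-1}((\Ctwobonds_{p,q})_{\subseteq \sigma})$ is contractible, so the join is contractible.

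\textbf{Case $p=0$, $\Gamma(\sigma)$ not a $\theta$-graph.} By \cref{prop:upper_fibre}, the upper interval $(\Ctwobonds_{p,q})_{\supset\sigma}$ is contractible, so again the join is contractible.

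\textbf{Case $\Gamma(\sigma)$ a $\theta$-graph.} The $\theta$-graph has $n+1$ edges and $\sigma\notin\Conebonds$. Each of its edges $e$ has no vertex $v$ other than its endpoints, so by \cref{lem:equivalent_characterisations_of_E2sep} there is no $v$ with $|G|-\ls v,e\rs$ disconnected. Hence $\twosepedges(\sigma)=\emptyset$, which gives $p=0$ and $|\sigma|=n+1=p+q$, so $q=n+1$.

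Conversely, suppose $p=0$ and $q=n+1$. Then $\Gamma(\sigma)\in\FreeS\setminus\Conebonds$ has rank $n$ and $n+1$ edges, so it has exactly two vertices. It has no loops because it has no cut vertex (\cref{lem:Conebond_via_cut_vertices}). A two-vertex graph with all $n+1$ edges joining the two vertices is the $\theta$-graph.

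In this case \cref{prop:upper_fibre} says the lower fibre is the boundary of an $n$-simplex, hence $\simeq S^{n-1}$, and the upper interval is $\simeq\bigvee_{(n-1)!}S^{n-3}$. Since joins respect homotopy equivalences, and $S^{a}\ast S^{b}\simeq S^{a+b+1}$ with joins distributing over wedges (up to homotopy, for CW complexes, $X\ast Y\simeq\Sigma(X\wedge Y)$), we get
\begin{equation*}
S^{n-1}\ast \bigvee_{(n-1)!}S^{n-3}\simeq \bigvee_{(n-1)!} S^{(n-1)+(n-3)+1}=\bigvee_{(n-1)!}S^{2n-3}.
\end{equation*}

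The main subtlety is keeping the two directions of the characterisation straight. If one wanted to be independent of the lower-fibre claim in \cref{prop:upper_fibre}, one could verify it directly: for the $\theta$-graph, every proper nonempty subset of the $n+1$ spheres lies in $\partial\FreeS\subseteq\Conebonds$, because collapsing any edge creates loops on the merged vertex. This confirms that the lower fibre is all proper faces, i.e.\ $\partial\Delta^{n}$.
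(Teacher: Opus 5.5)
Your proof is correct and follows the paper's argument: contractibility away from the $\theta$-graph comes from \cref{prop:lower_fibre} and \cref{prop:upper_fibre}, the characterisation comes from the vertex count together with the absence of loops, and the $\theta$-case is $S^{n-1}\ast\bigvee_{(n-1)!}S^{n-3}\simeq\bigvee_{(n-1)!}S^{2n-3}$. Your use of \cref{lem:equivalent_characterisations_of_E2sep} to get $\twosepedges(\sigma)=\emptyset$ is more careful than the paper's remark that no pair of edges forms a 2-bond, since $\twosepedges$ also takes expansions into account.

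There is one slip in your closing aside, and it matters somewhat because this is where the paper actually proves the lower-fibre claim. The codimension-one faces of $\sigma$ are roses, and these lie in $\FreeS\setminus\partial\FreeS$, not in $\partial\FreeS$. They are nevertheless in $\Conebonds$, because the single vertex carrying $n\geq 2$ loops is a cut vertex (\cref{lem:Conebond_via_cut_vertices}). Only the faces of higher codimension lie in $\partial\FreeS$. So your conclusion that the lower fibre consists of all proper faces, i.e.\ is $\partial\Delta^n$, still stands.
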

\begin{proof}
	If $\Gamma(\sigma)$ is not a $\theta$-graph, then the proposition follows immediately from \cref{prop:lower_fibre} and \cref{prop:upper_fibre} because the join of a contractible space with any other space is contractible. 
	
	A $\theta$-graph of rank $n\geq 2$ has two vertices and $n+1$ edges between them, no pair of which forms a 2-bond. So it is clear that these can only arise if $p=0$ and $q=n+1$. Every other graph of rank $n$ with $n+1$ edges has a cut vertex, so lies in $\Conebonds$ and hence not in $\Ctwobonds_{0,n+1}\setminus \Ctwobonds_{0,n}$.
	Lastly, if $\Gamma(\sigma)$ is a $\theta$-graph, then $f^{-1}((\Ctwobonds_{0,n+1})_{\subseteq \sigma})$ is the poset of all proper faces of $\sigma$, because each such face lies in $\Conebonds \subseteq \Ctwobonds_{0,n+1}$. Hence, the order complex of this poset is the barycentric subdivision of the boundary of the $n$-simplex $\sigma$, so homotopy equivalent to an $(n-1)$-sphere. By \cref{lem:fibre_theta_graph}, the upper interval $(\Ctwobonds_{p,q})_{\supset \sigma}$ is homotopy equivalent to a wedge of $(n-1)!$ spheres of dimension $n-3$. Hence, the join \eqref{eq:fibre_join_two_bonds} is homotopy equivalent to a wedge of $(n-1)!$ spheres of dimension $(n-3)+(n-1)+1 = 2n-3$.
\end{proof}

Using the above proposition, we can now finish the proofs of \cref{thm:C'_he_C} and \cref{thm:intro}.

\begin{proof}[Proof of \cref{thm:C'_he_C}]
	We apply Quillen's fibre lemma as stated in \cref{thm:Quillen_fibre}, using \cref{prop:inclusion_fibres_highly_connected}. Consider the inclusions of posets
	\begin{equation*}
		\Conebonds \hookrightarrow \Ctwobonds_{0,n} \hookrightarrow \Ctwobonds_{0,n+1} \hookrightarrow \Ctwobonds.
	\end{equation*}
	The first and third inclusions are homotopy equivalences because the fibre joins are contractible and the second is $(2n-3)$-connected because its fibre joins in \cref{prop:inclusion_fibres_highly_connected} are $(2n-4)$-connected.
\end{proof}

\begin{proof}[Proof of \cref{thm:intro}]
	\cref{thm:intro} follows immediately from \cref{thm:hom_eq_partialS_C}, which states that $\partial \FreeS\hookrightarrow \Conebonds$ is a homotopy equivalence, and \cref{thm:C'_he_C}, which states that $\Conebonds\hookrightarrow \Ctwobonds$ is $(2n-3)$-connected.
\end{proof}

\section{Equivariance and graph complexes}

We conclude with comments on the equivariance of the homotopy equivalences constructed in this article and about the relation to the commutative graph complex. The discussion is brief and only sketches the arguments.
Throughout the section, let $G = \Out(F_n)$ and for $\sigma\in \FreeS$, let $G_{\sigma}$ denote the $G$-stabiliser of $\sigma$.

\subsection{Equivariance}
\label{sec:equivariance}

In \cite[Proposition A.1]{Piterman2025a}, Piterman--Smith give an equivariant version of Quillen's fibre lemma. Using it one obtains the following.

\begin{theorem}
	\label{thm:equivariance}
	For all $(p,q)$, the inclusion $\Conebonds_{p,q-1}\hookrightarrow \Conebonds_{p,q}$ is a $G$-homotopy equivalence.

	For $(p,q)\neq (0,n+1)$, the inclusion $\Ctwobonds_{p,q-1}\hookrightarrow \Ctwobonds_{p,q}$ is a $G$-homotopy equivalence.
\end{theorem}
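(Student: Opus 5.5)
The plan is to apply the equivariant Quillen fibre lemma of Piterman--Smith \cite[Proposition A.1]{Piterman2025a} to each inclusion $f\colon X\hookrightarrow Y$ of the filtrations. For a $G$-equivariant poset map, its hypothesis is roughly that for every $y\in Y$ the relevant fibre join is $G_y$-contractible. Concretely, $f^{-1}(Y_{\le y})\ast Y_{>y}$ should have contractible $H$-fixed points for every subgroup $H\le G_y$ (finite subgroups suffice, since stabilisers of simplices of $\FreeS$ are finite-by-finite-index subgroups and the complexes are proper in the relevant sense). Before using it, I would check that all the posets involved are $G$-invariant. The sets $\onesepedges(\sigma)$ and $\twosepedges(\sigma)$ are defined purely graph-theoretically from $\Gamma(\sigma)$, and $\Gamma(g\sigma)\cong\Gamma(\sigma)$ compatibly with the sphere labels. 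Hence $\partial\FreeS$, $\Conebonds$, $\Ctwobonds$, every $\Conebonds_{p,q}$ and every $\Ctwobonds_{p,q}$ are $G$-subposets, and the inclusions are $G$-maps.

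The main point is then that every contraction used earlier is canonical, and hence $G_\sigma$-equivariant. Fix $\sigma\in Y\setminus X$ and $H\le G_\sigma$.
\begin{itemize}
\item For $y\in X$, the fibre $f^{-1}(Y_{\le y})$ has maximum $y$, which is $G_y$-fixed. The join therefore has $H$-fixed points containing a cone point, so they are contractible.
\item In \cref{lem:lower_fibre_one_bonds} and \cref{prop:lower_fibre} (the cases $p\ge1$), the retraction $\rho\mapsto\rho\cup\onesepedges(\sigma)$, respectively $\rho\mapsto\rho\cup\twosepedges(\sigma)$, commutes with $G_\sigma$. Its image has the $G_\sigma$-fixed minimum $\onesepedges(\sigma)$, respectively $\twosepedges(\sigma)$.
\end{itemize}
Restricting to $H$-fixed points then gives monotone maps between fixed-point posets whose image has a fixed cone point, so the lower fibre is $H$-contractible. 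The join with any $H$-space is then $H$-contractible, because $(A\ast B)^H=A^H\ast B^H$.

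For the upper-interval cases with $p=0$, I would argue as follows.
\begin{itemize}
\item For $\Conebonds$: the retraction $\sigma\mapsto\rho\cup\onesepedges(\sigma)$ in \cref{lem:upper_fibre_one_bonds} is equivariant. The cone point of the join factor $\mcP_v$ from \cite{Conant2005} must then be chosen invariantly, or replaced by an equivariant argument.
\item For $\Ctwobonds$ with $(p,q)\ne(0,n+1)$: the retractions in the filtration of \cref{lem:upper_fibre_simeq_fibreonlytwo} are equivariant. This reduces the claim to showing that $\fibreonlytwo$ is $G_\rho$-contractible.
\end{itemize}

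I expect the main obstacle to be exactly this step. The cone points in \cref{lem:cone_point_P_vw} and \cref{lem:contractible_upper_fibre} involve choices: a 2-vertex cut $\{v,w\}$ and a big component $c_1$. These choices need not be $H$-invariant, so a single cone point is not enough. My first attempt would be to show that for each $H$ the fixed subcomplex $(\fibreonlytwo)^H$ is still contractible. It is a flag complex by \cref{lem:fibreonlytwo_is_flag}. By \cref{lem:Pv_union_of_Pvw,lem:helper1,lem:helper2_1,lem:helper2_2}, the partitions of the form $\{c,X\setminus c\}$ for big components $c$ are pairwise compatible with every vertex, since they are compatible with all partitions. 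Moreover, the $H$-orbit of such a cone vertex is a simplex of $\fibreonlytwo$ all of whose vertices are cone vertices. The barycentre of this orbit simplex is then an $H$-fixed point of the realisation, joined to everything, so $(\fibreonlytwo)^H$ is a cone.

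Finally, the same argument, applied to the set of all cone vertices of $\mcP_v$ (which form a simplex), handles the $\Conebonds$ case. Quillen's fibre theorem in the Piterman--Smith form then yields $G$-homotopy equivalences. The case $(0,n+1)$ is excluded because there the join is a wedge of $(2n-3)$-spheres by \cref{prop:inclusion_fibres_highly_connected}.
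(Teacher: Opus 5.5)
Your proposal is correct and follows the paper's proof: apply the Piterman--Smith equivariant fibre lemma, then check that every monotone retraction, unique minimum and cone point used earlier is compatible with the stabiliser actions, exactly as summarised in the paper's table. Your orbit-simplex/barycentre argument is precisely the justification the paper leaves implicit for its criterion that a (not necessarily fixed) cone point already gives $H$-contractibility; in the $\Conebonds$ case, take the $H$-orbit of the cone vertex inside the whole join $\ast_v\mcP_v$, since $H$ may permute the factors.
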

\begin{proof}
	Let $X$ denote either $\Conebonds$ or $\Ctwobonds$.
	By \cite[Proposition A.1.2]{Piterman2025a}, it suffices to show that for all $\sigma\in X_{p,q}\setminus X_{p,q-1}$, the join 
	\begin{equation*}
			f^{-1}((X_{p,q})_{\subseteq \sigma})\ast (X_{p,q})_{\supset \sigma}
	\end{equation*}
	is $G_\sigma$-contractible.

	We have already shown that in each case the joins are contractible; it therefore remains to verify that all homotopy equivalences employed are equivariant.
	To this end we use the following criteria, which hold for any group $H$:
	\begin{enumerate}
		\item \label{it:equivariant_monotone}Every monotone $H$-equivariant poset map from an $H$-poset to itself defines an $H$-homotopy equivalence to its image.
		\item \label{it:equivariant_unique_min_max}If an $H$-poset has a unique minimal or maximal element, then it is $H$-contractible.
		\item \label{it:equivariant_simplicial}If an $H$-simplicial complex has a cone point, then it is $H$-contractible.
		\item \label{it:equivariant_join}The join of $H$-posets or $H$-simplicial complexes is $H$-contractible if one of the join factors is $H$-contractible.
	\end{enumerate}
	\cref{it:equivariant_monotone} was shown by Th{\'e}venaz--Webb \cite[Corollary 1.2]{Thevenaz1991}, the other ones are easy to derive from it.	

	One can now go through the homotopy equivalences in the previous sections and use these four items to show that they are all equivariant. We will not discuss this in detail, but included an overview of the relevant results in \cref{tab:selected_lemmas_overview}.
	A fact used repeatedly is that for every $g\in G$ and $\sigma\in \FreeS$, we have $g\onesepedges(\sigma) = \onesepedges(g \sigma)$ and  $g\twosepedges(\sigma) = \twosepedges(g \sigma)$.
	\begin{table}
		\centering
		\begin{tabular}{l|l}
			Result & Criteria used to show equivariance \\
			\hline
			\cref{lem:lower_fibre_one_bonds} & \cref{it:equivariant_monotone}, \cref{it:equivariant_unique_min_max} \\
			\hline
			\cref{lem:upper_fibre_one_bonds} & \cref{it:equivariant_monotone}, \cref{it:equivariant_simplicial}, \cref{it:equivariant_join} \\
			\hline
			\cref{prop:lower_fibre_simeq_fibrenontwo} & \cref{it:equivariant_monotone} \\
			\hline
			\cref{lem:fibrenontwo_contractible} & \cref{it:equivariant_unique_min_max} \\
			\hline
			\cref{lem:upper_fibre_simeq_fibreonlytwo} & \cref{it:equivariant_monotone}, \cref{it:equivariant_unique_min_max} \\
			\hline
			\cref{lem:contractible_upper_fibre} & \cref{it:equivariant_simplicial} \\
			\hline
			\cref{prop:inclusion_fibres_highly_connected} & \cref{it:equivariant_join}
		\end{tabular}
		\caption{Homotopy equivalences and the criteria used to establish equivariance.}
		\label{tab:selected_lemmas_overview}
	\end{table}
\end{proof}

\cref{thm:equivariance} implies that the first and the last inclusion in
\begin{equation*}
	\partial \FreeS \hookrightarrow \Ctwobonds_{0,n} \hookrightarrow \Ctwobonds_{0,n+1} \hookrightarrow \Ctwobonds
\end{equation*}
are $G$-homotopy equivalences.
The middle inclusion, however, is not.

\subsection{Relation to the commutative graph complex}
\label{sec:graph_complexes}

In the introduction, \cref{sec:graph_complexes_intro}, it is claimed that conceptually, \cref{thm:intro} can be seen as a universal cover version of results on the commutative graph complex due to Conant--Gerlits--Vogtmann \cite{Conant2005}, Willwacher--{\v{Z}}ivkovi{\'c} \cite{Willwacher2015} and Willwacher \cite{Willwacher2025a}. In the present section, we add more details on this relation and explain what additional input would be needed to actually deduce these results from \cref{thm:intro}.

For $n\geq 3$, the combination of these graph complex results (cf.~\cref{sec:graph_complexes_intro}) is equivalent to the existence of an isomorphism
\begin{equation}
	\label{eq:equivariant_homology_iso}
	H_\bullet^{G}(\FreeS, \partial \FreeS; \coeffs)\cong H_\bullet^{G}(\FreeS, \Ctwobonds; \coeffs),
\end{equation}
where $\coeffs = \mbQ$ with the trivial action for the even version of the commutative graph complex, and $\coeffs = \mbQ^{\det}$ for the odd version. Here, $\mbQ^{\det}$ denotes the determinant representation that lets $\phi\in \Out(F_n)$ act on $\mbQ$ via the concatenation of maps
\[
\begin{array}{ccccc}
\Out(F_n) &\to& \GL{n}{\mbZ} &\to& \ls \pm 1\rs\\
\phi &\mapsto& \bar{\phi} &\mapsto& \det(\bar{\phi}).
\end{array}
\]
The equivariant homology in \cref{eq:equivariant_homology_iso} is group homology with coefficients in a chain complex (see \cite[Chapter VII.7]{Bro:Cohomologygroups}), so
\begin{equation*}
	H_\bullet^{G}(X, Y; \coeffs)\coloneqq H_\bullet(G; C_\bullet(X, Y; \coeffs)).
\end{equation*}

From the short exact sequence 
\begin{equation*}
	0 \to C_\bullet(\Ctwobonds, \partial \FreeS; \coeffs) \to C_\bullet(\FreeS, \partial \FreeS; \coeffs) \to C_\bullet(\FreeS, \Ctwobonds; \coeffs) \to 0,
\end{equation*}
one gets a long exact sequence 
\begin{multline*}
	\cdots \to H_k^{G}(\Ctwobonds, \partial \FreeS; \coeffs)
	\to H_k^{G}(\FreeS, \partial \FreeS; \coeffs) \\
	\to H_k^{G}(\FreeS, \Ctwobonds; \coeffs)
	\to H_{k-1}^{G}(\Ctwobonds, \partial \FreeS; \coeffs) \to \cdots
\end{multline*}
so the difference between the two sides of \cref{eq:equivariant_homology_iso} is measured by the $G$-equivariant homology of the pair $(\Ctwobonds, \partial \FreeS)$ with coefficients in $\coeffs$.
If one can show that this homology vanishes, then one gets the isomorphism in \cref{eq:equivariant_homology_iso}.
To relate this to \cref{thm:intro}, one can look at the hyperhomology spectral sequence for $H^{G}_{\bullet}(\Ctwobonds, \partial \FreeS; \coeffs)$, see \cite[Chapter VII.5]{Bro:Cohomologygroups}. It has an $E^2$-page of the form
\begin{equation*}
	E^2_{p,q} = H_p\left(G; H_q(\Ctwobonds, \partial \FreeS; \coeffs)\right) \Longrightarrow H^{G}_{p+q}(\Ctwobonds, \partial \FreeS; \coeffs).
\end{equation*}
So if one can show that the (non-equivariant) relative homology $H_q(\Ctwobonds, \partial \FreeS; \coeffs)$ vanishes for all $q$, then one gets the desired vanishing of the $G$-equivariant homology of $(\Ctwobonds, \partial \FreeS)$. Now if the inclusion $\partial \FreeS\hookrightarrow \Ctwobonds$ were a homotopy equivalence, then it would immediately follow that $H_q(\Ctwobonds, \partial \FreeS; \coeffs)$ vanishes for all $q$.\footnote{Note that in order to get this vanishing, one does not need $G$-equivariance of the homotopy equivalence.} 
However, \cref{thm:intro} only states that it is $(2n-3)$-connected, so one a priori only gets vanishing for $q\leq 2n-3$ and additional work is needed to deduce \cref{eq:equivariant_homology_iso}.

We sketch how the information about the non-trivial fibres of $\partial \FreeS\hookrightarrow \Ctwobonds$ we obtained in \cref{prop:inclusion_fibres_highly_connected} can be helpful here:
We know that $\partial \FreeS$ is homotopy equivalent to a complex of dimension $2n-3$ \cite[Theorem B]{BG:Homotopytypecomplex}. Assume that the same was true for $\Ctwobonds$ -- note that the results in this article do not prove this, but it seems like a natural hypothesis and simplifies the following discussion. Then the relative homology $H_q(\Ctwobonds, \partial \FreeS; \coeffs)$ vanishes for all $q\neq 2n-2$. 
This implies that the spectral sequence above has only one non-zero row, so it collapses and one gets
\begin{equation*}
	H^{G}_{p+2n-2}(\Ctwobonds, \partial \FreeS; \coeffs) \cong H_p\left(G; M\right),
\end{equation*}
where $M\coloneqq H_{2n-2}(\Ctwobonds, \partial \FreeS; \coeffs)$.
By our dimension assumption on $\Ctwobonds$, the module $M$ sits in a short exact sequence
\begin{equation*}
	0 \to M \to H_{2n-3}(\partial \FreeS; \coeffs) \to H_{2n-3}(\Ctwobonds; \coeffs) \to 0.
\end{equation*}
So $M$ can be described by understanding the homology of $\partial \FreeS$ in the  top-degree $2n-3$ that is not present in the homology of $\Ctwobonds$.
This contribution comes from the non-trivial fibres that occur when adding $\theta$-graphs. Indeed, by \cref{thm:equivariance}, the inclusions $\partial \FreeS \hookrightarrow \Ctwobonds_{0,n}$ and $\Ctwobonds_{0,n+1}\hookrightarrow \Ctwobonds$ are $G$-homotopy equivalences, being compositions of the $G$-homotopy equivalences provided by that theorem. Hence, as $G$-modules, $M \cong H_{2n-2}(\Ctwobonds_{0,n+1}, \Ctwobonds_{0,n}; \coeffs)$, i.e.~$M$ is concentrated exactly at the filtration step where the $\theta$-graphs are added.
Describing it as a $G$-module requires one to study the action of $G_\sigma$ on the homology of the join
\begin{equation*}
		f^{-1}((\Ctwobonds_{0,n+1})_{\subseteq \sigma})\ast (\Ctwobonds_{0,n+1})_{\supset \sigma},
\end{equation*}
where $\Gamma(\sigma)$ is a $\theta$-graph.
For the homology of the lower fibre 
\begin{equation*}
	\tilde H_k\left(f^{-1}((\Ctwobonds_{0,n+1})_{\subseteq \sigma})\right) \cong
	\begin{cases}
		\mbZ & \text{if } k = n-1, \\
		0 & \text{otherwise,}
	\end{cases}
\end{equation*}
this action is easy to describe because  $f^{-1}((\Ctwobonds_{0,n+1})_{\subseteq \sigma})$ is given by the boundary of an $n$-simplex with vertices the elements of $\sigma$. For the homology of the upper interval, 
\begin{equation*}
	\tilde H_k\left((\Ctwobonds_{0,n+1})_{\supset \sigma}\right) \cong
	\begin{cases}
		\mbZ^{(n-1)!} & \text{if } k = n-3, \\
		0 & \text{otherwise,}
	\end{cases}
\end{equation*}
this is more involved, but should also be doable because this representation has been described explicitly by Robinson--Whitehouse \cite{Robinson1996}.

\printbibliography

\medskip
\noindent\small
\textsc{Benjamin Br\"uck}\\
\textsc{Institut für mathematische Logik und Grundlagenforschung}\\
Universität Münster\\
Einsteinstrasse 62\\
48149 Münster, Germany\\
\href{mailto:benjamin.brueck@uni-muenster.de}{benjamin.brueck@uni-muenster.de}

\end{document}